\documentclass[sn-mathphys,Numbered]{sn-jnl}


\usepackage{graphicx}%
\usepackage{multirow}%
\usepackage{amsmath,amssymb,amsfonts}%
\usepackage{amsthm}%
\usepackage{mathrsfs}%
\usepackage[title]{appendix}%
\usepackage{xcolor}%
\usepackage{textcomp}%
\usepackage{manyfoot}%
\usepackage{booktabs}%
\usepackage{algorithm}%
\usepackage{algorithmicx}%
\usepackage{algpseudocode}%
\usepackage{listings}%




\theoremstyle{thmstyleone}%
\newtheorem{theorem}{Theorem}
\newtheorem{proposition}[theorem]{Proposition}%
\newtheorem{corollary}[theorem]{Corollary}%
\newtheorem{lemma}[theorem]{Lemma}%

\theoremstyle{thmstyletwo}%
\newtheorem{example}{Example}%
\newtheorem{remark}{Remark}%

\theoremstyle{thmstylethree}%
\newtheorem{definition}{Definition}%

\raggedbottom

\begin{document}
	
	\title[Stochastic two-scale convergence in the mean in Orlicz-Sobolev's spaces and applications to the homogenization of an integral functional]{Stochastic two-scale convergence in the mean in Orlicz-Sobolev's spaces and applications to the homogenization of an integral functional}
	
	
	\author[1]{\fnm{\textsc{Dongho}} \sur{\textsc{Joseph}}}\email{joseph.dongho@fs.univ-maroua.cm}
	\equalcont{These authors contributed equally to this work.}
	
	\author*[2]{\fnm{\textsc{Fotso Tachago}} \sur{\textsc{Joel}}}\email{fotsotachago@yahoo.fr}
	\equalcont{These authors contributed equally to this work.}
	
	\author[3]{\fnm{\textsc{Tchinda Takougoum}} \sur{\textsc{Franck}}}\email{takougoumfranckarnold@gmail.com}  
	\equalcont{These authors contributed equally to this work.}

	\affil[1]{\orgdiv{Department of Mathematics and Computer Science}, \orgname{University of Maroua}, \orgaddress{\street{Domayo}, \city{Maroua}, \postcode{P.O. Box 814}, \country{Cameroon}}}
	
	\affil*[2]{\orgdiv{Department of Mathematics}, \orgname{Higher Teachers Trainning College,University of Bamenda}, \orgaddress{\street{Bambili}, \city{Bamenda}, \postcode{P.O. Box 39}, \country{Cameroon}}}
	
	\affil[3]{\orgdiv{Department of Mathematics and Computer Science}, \orgname{University of Maroua}, \orgaddress{\street{Domayo}, \city{Maroua}, \postcode{P.O. Box 814}, \country{Cameroon}}}

	
	\abstract{In this paper, we study the stochastic homogenization for a family of integral functionals with convex and nonstandard growth integrands defined on Orlicz-Sobolev's spaces. One  fundamental in this topic is to extend the classical compactness results of the two-scale convergence in the mean method to this type of spaces.  Moreover, it is shown by the two-scale convergence in the mean method that the sequence of minimizers of a class of highly oscillatory minimizations problems involving convex functionals converges to the minimizers of a homogenized problem with a suitable convex function.\footnotetext{This work is supported by MINESUP/University of Maroua}}

	\keywords{dynamical system, stochastic homogenization, convex integrands, nonstandard growth, two-scale convergence in the mean,  Orlicz-Sobolev's spaces}
	
	
	\pacs[MSC Classification]{35B27, 35J20, 46E30, 37A05}
	\maketitle
	
	
	\section{Introduction} \label{sect1} 
	\par Let $\Phi : [0,+\infty[ \rightarrow [0,+\infty[ $ be a Young function of class   $\Delta_{2}\cap \Delta'$ (see Definition \ref{c1deltaC}), such that its Fenchel's conjugate $\widetilde{\Phi}$ is also a Young function of class $\Delta_{2}\cap \Delta'$, and $(\Omega, \mathscr{M}, \mu)$ be a measure space with probability measure $\mu$. 
	Let $(\omega,\lambda) \rightarrow f(\omega,\lambda)$ be a function from $\Omega\times \mathbb{R}^{N}$ into $\mathbb{R}$ satisfying the following properties:
	\begin{itemize}
		\item[(H$_{1}$)] $f(\cdot, \lambda)$ is measurable for all $\lambda \in \mathbb{R}^{N}$ ;
		\item[(H$_{2}$)] $f(\omega,\cdot)$ is strictly convex for almost all $\omega \in \Omega$ ;
		\item[(H$_{3}$)] There are two constants $c_{1}, \, c_{2} > 0$ such that 
		\begin{equation}
			c_{1} \Phi(|\lambda|) \leq f(\omega,\lambda) \leq c_{2}(1+ \Phi(|\lambda|))
		\end{equation}
		for all $\lambda \in \mathbb{R}^{N}$ and for almost all $\omega \in \Omega$.      
	\end{itemize}
	Let $Q$ be a bounded open set in $\mathbb{R}^{N}_{x}$ (the space $\mathbb{R}^{N}$ of variables $x = (x_{1}, \cdots , x_{N})$) and $\{ T(x) : \Omega \rightarrow \Omega, \, x \in \mathbb{R}^{N} \}$ be a fixed $N$-dimensional dynamical system on $\Omega$, with respect to a fixed probability measure $\mu$ on $\Omega$ invariant for $T$.  For each given $\epsilon > 0$, let $F_{\epsilon}$ be the integral functional defined by 
	\begin{equation}\label{ras1}
		F_{\epsilon}(v) = \iint_{Q\times \Omega} f\left( T(\epsilon^{-1}x)\omega, \, Dv(x,\omega) \right)dxd\mu, \quad v \in W^{1}_{0}L^{\Phi}_{D_{x}}(Q; L^{\Phi}(\Omega)),
	\end{equation} 
	where $D$ denotes the usual gradient operator in $Q$, i.e. $D = D_{x}= \left(\frac{\partial}{\partial x_{i}} \right)_{1\leq i\leq N}$, $f$ is a random integrand satisfying above conditions $(H_{1})-(H_{3})$ and  $W^{1}_{0}L^{\Phi}_{D_{x}}(Q; L^{\Phi}(\Omega))$ is an Orlicz-Sobolev space which will be specified later  (see (\ref{c1orli1S}) and comment below).  We are interested in the homogenization of the sequence of solutions of the problem
	\begin{equation}\label{ras2}
		\min \left\{ F_{\epsilon}(v) : v \in W^{1}_{0}L^{\Phi}_{D_{x}}(Q; L^{\Phi}(\Omega)) \right\}.
	\end{equation} 
	i.e. the analysis of the asymptotic behaviour of minimizers of functionals $F_{\epsilon}$
	when $\epsilon \rightarrow 0$. With considerations to principles of physics (see \cite{abda,gambi,jikov}), e.g. in elasticity theory, the term $F_{\epsilon}(v)$ can be viewed as the energy under a deformation $v$ of an elastic body whose microstructures behave randomly in some scales. Let us point out that, the functionals $F_{\epsilon}$ have been studied by many authors. In \cite{tacha1}, Fotso  Tachago and Nnang study the periodic homogenization in the Orlicz-Sobolev's  spaces when the functionals $F_{\epsilon}$ are of type $\int_{Q} f(\frac{x}{\epsilon}, Dv(x))\,dx$, where $v \in W^{1}_{0}L^{\Phi}(Q)$  (see Section \ref{sect2} for the notations).  
	For more details on homogenization via the periodic two-scale convergence in the Orlicz-Sobolev's spaces, we refer to \cite{tacha2,tacha3,tacha4,tacha5}.  In \cite{sango}, the general case, when the functional $F_{\epsilon}$ are of type   $\iint_{Q\times\Omega} f(x,T(\frac{x}{\epsilon}\omega), \frac{x}{\epsilon^{2}}, Dv(x,\omega))\,dx\,d\mu$ has been studied for the test functions taking values in classical Sobolev's spaces, i.e. $v \in L^{p}(\Omega, W_{0}^{1,p}(Q))$  (the space of functions $v \in L^{p}(\Omega\times Q)$ such that for a.e. $\omega\in \Omega$, $v(\omega, \cdot) \in  W_{0}^{1,p}(Q)$ and $\int_{\Omega} \|v(\omega,\cdot)\|^{p}_{W_{0}^{1,p}(Q)} < +\infty$, with $p\geq 1$). 
	For more details we refer, e.g. to \cite{abda,bia,jikov,wou}.
	\par Our contribution in this paper is to study the limiting behaviour, via the stochastic two-scale converge in the mean technique of $F_{\epsilon}$ in their form (\ref{ras1}), defined on the Orlicz-Sobolev's spaces $W^{1}_{0}L^{\Phi}_{D_{x}}(Q; L^{\Phi}(\Omega))$ which is more general than the classical Sobolev space $ L^{p}(\Omega, W_{0}^{1,p}(Q))$. 
	The extension to the Orlicz-Sobolev's spaces is also motivated by the fact that the two-scale convergence method introduced by Nguetseng \cite{nguet1} and later developed by Allaire \cite{allair1} have been widely adopted in homogenization of PDEs in classical Sobolev spaces neglecting materials where microstructure cannot be conveniently captured by modeling exclusively by means of those spaces. Moreover it is well know (see, e.g. \cite{mignon2}) that there exist problems whose solution must naturally belong not to the classical Sobolev spaces but rather to the Orlicz-Sobolev spaces.  \\
	In the framework of stochastic homogenization problems, Bourgeat, Mikeli\`{c} and Wright (see \cite{bourgeat}) prove the following two-scale in the mean convergence theorem for Sobolev's spaces: \textit{ for $p=2$, any bounded sequence $(u_{\epsilon})_{\epsilon>0}$ in $L^{p}(\Omega, W^{1,p}(Q))$ admits a subsequence still denoted by $(u_{\epsilon})$  such that, as $\epsilon\to 0$, one has 
		\begin{equation*}
			\left\{\begin{array}{l}
				u_{\epsilon}  \rightarrow  u_{0} \; stoch.\;mean \; in \; L^{p}(Q\times\Omega)-weak \, 2s,  \\
				\\
				\displaystyle 	\iint_{Q\times\Omega} Du_{\epsilon}(x,\omega) \phi(x, T(\epsilon^{-1}x)\omega)dxd\mu \\
				\longrightarrow  \displaystyle  \iint_{Q\times\Omega} (Du_{0}(x,\omega) + \overline{D}_{\omega}u_{1}(x,\omega))\phi(x, \omega)dxd\mu, 
			\end{array} 
			\right.
		\end{equation*}
		for all $\phi \in [\mathcal{C}^{\infty}_{0}(Q)\otimes\mathcal{C}^{\infty}(\Omega)]^{N}$, where $u_{0} \in W^{1,p}(Q, I_{nv}^{p}(\Omega))$, $u_{1} \in L^{p}(Q, W^{1,p}_{\#}(\Omega))$ with $I_{nv}^{p}(\Omega)$ denote the subspace of $L^{p}(\Omega)$ consisting the functions that are invariant for the dynamical system $T$ and $W^{1,p}_{\#}(\Omega)$ denote the completion of $\mathcal{C}^{\infty}(\Omega)$ with respect to the norm $\|f\|_{W^{1,p}_{\#}(\Omega)} = \left(  \sum_{i=1}^{N}\|\overline{D}_{i,p}f\|^{p}_{L^{p}(\Omega)} \right)^{1/p}$ for $f \in W^{1,p}_{\#}(\Omega)$, $\overline{D}_{i,p}$ and $\overline{D}_{\omega}$ being the stochastic derivative and stochastic gradient respectively in $L^{p}$-spaces (see Subsection \ref{c1sub3S}). }\\
	The generalization of this result to Orlicz-Sobolev's spaces (see Theorem \ref{lem29}) is fundamental to the proof of the main result of this work. Precisely, we show (see Theorem \ref{lem37}) that: \textit{ the family of minimizers of (\ref{ras2}), $(u_{\epsilon})_{\epsilon>0}$, satisfies, as $\epsilon\to 0$
		\begin{equation*}
			\left\{\begin{array}{l}
				u_{\epsilon} \rightarrow  u_{0} \; stoch.\;mean \; in \; L^{\Phi}(Q\times\Omega)-weak \, 2s,   \\
				\\
				\displaystyle \iint_{Q\times\Omega} Du_{\epsilon}(x,\omega) \phi(x, T(\epsilon^{-1}x)\omega)dxd\mu \\
				\longrightarrow  \displaystyle  \iint_{Q\times\Omega}(Du_{0}(x,\omega) + \overline{D}_{\omega}u_{1}(x,\omega))\phi(x, \omega)dxd\mu, 
			\end{array} 
			\right.
		\end{equation*}
		for all $\phi \in [\mathcal{C}^{\infty}_{0}(Q)\otimes\mathcal{C}^{\infty}(\Omega)]^{N}$, where  the couple $(u_{0}, u_{1})$ is the minimizer in the space \\ $W^{1}_{0}L^{\Phi}_{D_{x}}(Q; I^{\Phi}_{nv}(\Omega)) \times L^{\Phi}(Q, W^{1}_{\#}L^{\Phi}(\Omega))$ of the global homogenized functional 
		\begin{equation*}
			(v_{0}, v_{1}) \rightarrow \iint_{Q\times\Omega} f(\omega, Dv_{0} + \overline{D}_{\omega}v_{1}) dxd\mu,
		\end{equation*}
		where the spaces $I_{nv}^{\Phi}(\Omega)$ and $W^{1}_{\#}L^{\Phi}(\Omega)$ will be specified later  (see Subsection \ref{c1sub3S}).  }  
	\par The paper is organized as follows. Section \ref{sect2} deals with some preliminary results on Orlicz-Sobolev's spaces and dynamical systems. In section \ref{sect3}, we extend the proofs of the main stochastic two-scale convergence in the mean results from $L^{p}$-spaces to Orlicz's spaces. Finally, in section \ref{sect4}, we sketch out the existence and uniqueness of minimizers for (\ref{ras2}) and we study the stochastic homogenization of problem for (\ref{ras2}).

	
	\section{Preliminary on Orlicz-Sobolev's spaces and dynamical systems}\label{sect2}
	
We recall some notations that we will use throughout this work.
\begin{itemize}
	\item 	$Q$ is a bounded open set of $\mathbb{R}^{N}$, integer $N>1$.
	\item $\mathcal{D}(Q)=\mathcal{C}^{\infty}_{0}(Q)$ is the vector space of smooth functions with compact support in $Q$. 
	\item	$\mathcal{C}^{\infty}(Q)$ is the vector space of smooth functions on $Q$. 
	\item	$\mathcal{K}(Q) $ is the vector space of continuous functions with compact support in $Q$. 
	\item	$(\Omega, \mathscr{M}, \mu)$ is a measure space with probability measure $\mu$. 
	\item	$\{T(x): \Omega\rightarrow \Omega \, , \, x \in \mathbb{R}^{N} \}$ is a $N$-dimensional dynamical system on $\Omega$.  
	\item	$\Phi$ is a Young function and $\widetilde{\Phi}$ its conjugate. 
	\item	$L^{\Phi}(\Omega)$ and $L^{\Phi}(Q)$ are Orlicz spaces of functions on $\Omega$ and $Q$, respectively.  
	\item	$W^{1}L^{\Phi}(Q) = \{ v \in L^{\Phi}(Q) \, : \, \frac{\partial v}{\partial x_{i}} \in L^{\Phi}(Q), \; 1 \leq i \leq N \}$, where derivatives are taken in the distributional sense, is an Orlicz-Sobolev's space on $Q$. 
	\item	$W^{1}_{0}L^{\Phi}(Q)$ the set of functions in $W^{1}L^{\Phi}(Q)$ with zero boundary condition on $Q$. 
	\item	$W^{1}L^{\Phi}(Q\times\Omega) = \{ v \in L^{\Phi}(Q\times\Omega) \, : \, \frac{\partial v}{\partial x_{i}} \in L^{\Phi}(Q\times\Omega) \; \textup{and} \; D_{i,\Phi}v \in L^{\Phi}(Q\times\Omega), \; 1 \leq i \leq N \}$, where derivatives are taken in the distributional sense, is an Orlicz-Sobolev's space on $Q\times\Omega$.
	\item	$W^{1}L^{\Phi}_{D_{x}}(Q; L^{\Phi}(\Omega)) = \{ v \in L^{\Phi}(Q\times\Omega) \, : \, \frac{\partial v}{\partial x_{i}} \in L^{\Phi}(Q\times\Omega), \; 1 \leq i \leq N \}$, where derivatives are taken in the distributional sense, is an Orlicz-Sobolev's space on $Q\times\Omega$.
	\item	$W^{1}_{0}L^{\Phi}_{D_{x}}(Q; L^{\Phi}(\Omega))$ the set of functions in $W^{1}L^{\Phi}_{D_{x}}(Q; L^{\Phi}(\Omega))$ with zero boundary condition on $Q$.
\end{itemize}
	
	\subsection{Young's function}
	
		All definitions and results recalled here are classical and can be found in \cite{adam,tacha1,tacha3}. 
		Let $\Phi : [0,+\infty) \rightarrow  [0,+\infty)$ be a Young function, that is, 
		$\Phi$ is continuous, convex ,
		$\Phi(t) > 0$ for $t > 0$,
		$\frac{\Phi(t)}{t} \to 0$ as $t\to 0$ and $\frac{\Phi(t)}{t} \to \infty$ as $t\to \infty$. Then
		$\Phi$ has an integral representation under form
		$
		\Phi(t) = \int_{0}^{t} \phi(\tau)\, d\tau,
		$
		where $\phi : [0, \infty) \rightarrow  [0, \infty)$ is nondecreasing, right continuous, with $\phi(0) = 0$, $\phi(t) > 0$ if $t>0$ and $\phi(t)\rightarrow \infty$ if $t\rightarrow \infty$. We denote $\widetilde{\Phi}$
		the Fenchel's conjugate (or complementary function) of the Young function $\Phi$, that is, $\widetilde{\Phi}(t) = \sup_{s\geq 0} \left( st - \Phi(s) \right) \; (t\geq 0)$, then $\widetilde{\Phi}$ is also a Young function. Given two Young functions $\Phi$ and $\Psi$, we say that $\Phi$ dominates $\Psi$ (denoted by $\Phi \succ \Psi$ or $\Psi \prec \Phi$) near infinity if there are $k>1$ and $t_{0}>0$ such that 
		\begin{equation*}
			\Psi(t) \leq \Phi(kt), \quad \forall t> t_{0}.
		\end{equation*}
		With this in mind, it is well known that if $\displaystyle \lim_{t\to +\infty} \frac{\Phi(t)}{\Psi(t)} = +\infty$ then $\Phi$ dominates $\Psi$ near infinity. Let us recall some important properties about Young's functions.  
	\begin{definition}\label{c1deltaC}\cite{adam} 
		\begin{enumerate}
			\item A Young function $\Phi$ is said to satisfy the $\Delta_{2}$-condition or $\Phi$ belongs to the class $\Delta_{2}$ at $\infty$, which is written as  $\Phi \in \Delta_{2}$, if there exist constants  $t_{0} > 0$, $k > 2$ such that  
			\begin{equation*}\label{c1eq1r}
				\Phi(2t) \leq k \Phi(t),
			\end{equation*}
			for all $t \geq t_{0}$.
			\item A Young function $\Phi$ is said to satisfy the $\nabla_{2}$-condition or $\Phi$ belongs to the class $\nabla_{2}$ at $\infty$, which is written as  $\Phi \in \nabla_{2}$, if there exist constants $t_{0} > 0$, $c > 2$ such that  
			\begin{equation*}
				\Phi(t) \leq \dfrac{1}{2c} \Phi(ct),
			\end{equation*}
			for all $t \geq t_{0}$.
			\item A Young function $\Phi$ is said to satisfy the $\Delta'$-condition (or $\Phi$ belongs to the class $\Delta'$) denoted by $\Phi \in \Delta'$ if there exists $\beta > 0$ such that  
			\begin{equation*}
				\Phi(ts) \leq \beta\, \Phi(t)\Phi(s), \quad \forall t,s \geq 0.
			\end{equation*}
		\end{enumerate}
	\end{definition}
	\begin{lemma}\cite{tacha3} \\
		Let  $\Phi$ be a Young function of class $\Delta_{2}$. Then there are $k> 0$ and $t_{0} \geq 0$ such that 
		\begin{equation}\label{lem11}
			\widetilde{\Phi}(\phi(t)) \leq k \, \Phi(t) \quad \textup{for} \, \textup{all} \, t \geq t_{0}.
		\end{equation}	
	\end{lemma}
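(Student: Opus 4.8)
The plan is to reduce the claimed inequality to a bound of the form $t\phi(t)\leq C\,\Phi(t)$, exploiting the integral representation $\Phi(t)=\int_{0}^{t}\phi(\tau)\,d\tau$ together with the $\Delta_{2}$-condition. The starting point is the classical Young equality for the Fenchel conjugate. Since $\widetilde{\Phi}(s)=\sup_{u\geq 0}\bigl(su-\Phi(u)\bigr)$ and the supremum is attained at a point $u$ where $\phi(u)=s$, taking $s=\phi(t)$ shows that $u=t$ is a maximizer, whence
\begin{equation*}
	\widetilde{\Phi}(\phi(t)) = t\,\phi(t) - \Phi(t).
\end{equation*}
When $\phi$ is merely nondecreasing and right continuous rather than strictly increasing, the same equality persists by working with the generalized inverse of $\phi$; this is the one step that requires a little care, but it is classical.

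It therefore suffices to control $t\phi(t)$, and here I would use the monotonicity of $\phi$. Since $\phi$ is nondecreasing, $\phi(\tau)\geq\phi(t)$ for every $\tau\in[t,2t]$, so
\begin{equation*}
	\Phi(2t) = \int_{0}^{2t}\phi(\tau)\,d\tau \geq \int_{t}^{2t}\phi(\tau)\,d\tau \geq t\,\phi(t).
\end{equation*}
Invoking the $\Delta_{2}$-condition supplies $t_{0}>0$ and $k_{0}>2$ with $\Phi(2t)\leq k_{0}\,\Phi(t)$ for all $t\geq t_{0}$; chaining the two bounds gives $t\phi(t)\leq k_{0}\,\Phi(t)$ on $[t_{0},+\infty)$.

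Substituting this into the Young equality yields, for all $t\geq t_{0}$,
\begin{equation*}
	\widetilde{\Phi}(\phi(t)) = t\phi(t) - \Phi(t) \leq k_{0}\,\Phi(t) - \Phi(t) = (k_{0}-1)\,\Phi(t),
\end{equation*}
so the assertion holds with $k=k_{0}-1>0$ and the same $t_{0}$ furnished by the $\Delta_{2}$-condition. The main obstacle in the whole argument is the rigorous justification of the Young equality $\widetilde{\Phi}(\phi(t))=t\phi(t)-\Phi(t)$ when $\phi$ is not strictly increasing; once that identity is in hand, everything else is an elementary consequence of the monotonicity of $\phi$ and the $\Delta_{2}$ growth bound.
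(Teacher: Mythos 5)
Your proof is correct and follows essentially the same route as the paper's (implicit) argument via its companion lemma, namely the chain $\widetilde{\Phi}(\phi(t)) \leq t\phi(t) \leq \Phi(2t)$ combined with the $\Delta_{2}$-condition; your use of the exact Young equality only sharpens the constant to $k_{0}-1$. Incidentally, the step you flag as delicate needs no generalized inverse: for $u\leq t$ one has $\phi(t)u-\Phi(u)=\phi(t)t-\Phi(t)-\int_{u}^{t}\bigl(\phi(t)-\phi(\tau)\bigr)\,d\tau\leq \phi(t)t-\Phi(t)$ since $\phi$ is nondecreasing, and symmetrically for $u\geq t$, so $u=t$ is a maximizer regardless of strict monotonicity.
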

	\begin{lemma}\cite{tacha3} \\
		Let $t\rightarrow \Phi(t) = \int_{0}^{t} \phi(\tau) d\tau$ be a Young function, and let $\widetilde{\Phi}$ be the Fenchel's conjugate of $\Phi$. Then one has
		\begin{equation}\label{lem10} 
			\left\{ \begin{array}{l}
				\dfrac{t\,\phi(t)}{\Phi(t)} \geq 1 \quad (\textup{resp.} \, > 1 \; \textup{if} \, \phi \, \textup{is} \, \textup{strictly} \; \textup{increasing})  \\
				
				\\
				\widetilde{\Phi}(\phi(t)) \leq t\,\phi(t) \leq \Phi(2t)
			\end{array}\right.
		\end{equation}
		for all $t >0$.
	\end{lemma}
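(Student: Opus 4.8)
The plan is to establish the three comparisons in \eqref{lem10} separately, each resting on just two ingredients: the integral representation $\Phi(t)=\int_{0}^{t}\phi(\tau)\,d\tau$ together with the monotonicity (hence convexity) of $\phi$, and the variational definition of the Fenchel conjugate $\widetilde{\Phi}(s)=\sup_{\sigma\geq 0}\bigl(s\sigma-\Phi(\sigma)\bigr)$. Two of the three are one-line integral estimates; only the conjugate inequality uses genuine duality structure.

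First I would prove $\Phi(t)\leq t\phi(t)$, which is exactly the assertion $t\phi(t)/\Phi(t)\geq 1$. Since $\phi$ is nondecreasing, $\phi(\tau)\leq\phi(t)$ for every $\tau\in[0,t]$, so
\begin{equation*}
\Phi(t)=\int_{0}^{t}\phi(\tau)\,d\tau\leq\int_{0}^{t}\phi(t)\,d\tau=t\,\phi(t).
\end{equation*}
If $\phi$ is strictly increasing, then $\phi(\tau)<\phi(t)$ on the whole interval $[0,t)$, which has positive measure, so the integral comparison is strict and yields $\Phi(t)<t\phi(t)$, i.e. $t\phi(t)/\Phi(t)>1$. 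The companion upper bound $t\phi(t)\leq\Phi(2t)$ comes from the same monotonicity argument applied to the right half of $[0,2t]$: discarding the nonnegative contribution of $[0,t]$ and using $\phi(\tau)\geq\phi(t)$ for $\tau\in[t,2t]$,
\begin{equation*}
\Phi(2t)=\int_{0}^{2t}\phi(\tau)\,d\tau\geq\int_{t}^{2t}\phi(\tau)\,d\tau\geq\int_{t}^{2t}\phi(t)\,d\tau=t\,\phi(t).
\end{equation*}

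The remaining inequality $\widetilde{\Phi}(\phi(t))\leq t\phi(t)$ is where the convex-duality structure enters, and it is the only step that is not immediate. Here I would use that $\phi(t)$ is a subgradient of the convex function $\Phi$ at $t$, which gives the supporting-line inequality $\Phi(\sigma)\geq\Phi(t)+\phi(t)(\sigma-t)$ for all $\sigma\geq 0$. Substituting this into the definition of the conjugate,
\begin{equation*}
\widetilde{\Phi}(\phi(t))=\sup_{\sigma\geq 0}\bigl(\sigma\,\phi(t)-\Phi(\sigma)\bigr)\leq\sup_{\sigma\geq 0}\bigl(\sigma\,\phi(t)-\Phi(t)-\phi(t)(\sigma-t)\bigr)=t\,\phi(t)-\Phi(t)\leq t\,\phi(t),
\end{equation*}
the last step using $\Phi(t)\geq 0$. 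Equivalently, this is the equality case $t\phi(t)=\Phi(t)+\widetilde{\Phi}(\phi(t))$ of Young's inequality combined with positivity of $\Phi$.

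The main, and essentially only, obstacle is justifying the supporting-line inequality cleanly when $\phi$ is merely nondecreasing and right-continuous rather than strictly increasing and continuous, since in that generality $\phi$ need not be invertible and the subdifferential of $\Phi$ at $t$ may be an interval. I would sidestep any inversion of $\phi$ by arguing the inequality $\Phi(\sigma)\geq\Phi(t)+\phi(t)(\sigma-t)$ directly from the integral representation, separating two cases: for $\sigma\geq t$ one has $\Phi(\sigma)-\Phi(t)=\int_{t}^{\sigma}\phi\geq(\sigma-t)\phi(t)$ by monotonicity, while for $\sigma<t$ the term $\phi(t)(\sigma-t)$ is nonpositive and $\Phi(\sigma)\geq 0$, so the inequality holds trivially. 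This keeps the whole proof self-contained at the level of the integral representation and the monotonicity of $\phi$.
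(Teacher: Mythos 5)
The paper does not prove this lemma at all: it is quoted verbatim from the reference \cite{tacha3}, so there is no internal proof to compare your argument against; what follows is an assessment of your proposal on its own terms. Your treatment of the first and third inequalities is correct: $\Phi(t)\leq t\phi(t)$ (with strictness when $\phi$ is strictly increasing, since the integrand comparison is strict on a set of positive measure) and $t\phi(t)\leq\Phi(2t)$ both follow exactly as you say from monotonicity of $\phi$ under the integral sign, and your reduction of $\widetilde{\Phi}(\phi(t))\leq t\phi(t)$ to the supporting-line inequality $\Phi(\sigma)\geq\Phi(t)+\phi(t)(\sigma-t)$ is the right move and is carried out correctly once that inequality is granted.

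However, your justification of the supporting-line inequality in the case $\sigma<t$ is wrong as written. You claim it holds ``trivially'' because $\phi(t)(\sigma-t)\leq 0$ and $\Phi(\sigma)\geq 0$; but the inequality to be proved is $\Phi(\sigma)\geq\Phi(t)+\phi(t)(\sigma-t)$, and nonpositivity of the linear term only shows that the right-hand side is at most $\Phi(t)$ --- it does not show it is at most $0$, so positivity of $\Phi(\sigma)$ proves nothing. Concretely, with $\Phi(\sigma)=\sigma^{2}/2$, $\phi(\sigma)=\sigma$, $t=1$, $\sigma=0.9$, the right-hand side equals $0.5+1\cdot(-0.1)=0.4>0$, while $\Phi(\sigma)=0.405$: the inequality holds, but only by a margin of $0.005$, not for the sign reason you give, and nothing in your stated argument rules out failure. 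The repair is immediate and uses exactly the same monotonicity device as your other cases: for $0\leq\sigma<t$,
\begin{equation*}
\Phi(t)-\Phi(\sigma)=\int_{\sigma}^{t}\phi(\tau)\,d\tau\leq\phi(t)\,(t-\sigma),
\end{equation*}
which rearranges to $\Phi(\sigma)\geq\Phi(t)+\phi(t)(\sigma-t)$. With that substitution your proof is complete and self-contained.
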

	We give now some examples of Young functions.
	\begin{example}
		The function $t \rightarrow \frac{t^{p}}{p}$, ($p > 1$) is a Young function which satisfy $\Delta_{2}$-condition and $\nabla_{2}$-condition. Its Fenchel's conjugate is a Young function $t \rightarrow \frac{t^{q}}{q}$, where $\frac{1}{p} + \frac{1}{q} = 1$. The function $t \rightarrow t^{p}\ln(1+t)$, ($p \geq 1$) is a Young function that satisfies $\Delta_{2}$-condition, while the Young function $t \rightarrow t^{\ln t}$ and $t \rightarrow e^{t^{r}} - 1$, ($r >0$) are not of class $\Delta_{2}$. However, for $r=1$, the Young function $t \rightarrow e^{t} - 1$ satisfy $\Delta'$-condition.   
	\end{example}	
	
	\subsection{Orlicz spaces}
	
	Let $Q$ be a bounded open set in $\mathbb{R}^{N}$ (integer $N\geq 1$), and let $\Phi$ be a Young function. The Orlicz-class $\widehat{L}^{\Phi}(Q)$ is defined to be the space of all measurable functions $u: Q\rightarrow \mathbb{R}$ such that 
	\begin{equation*}
		\int_{Q}^{} \Phi\left(\dfrac{|u(x)|}{\delta} \right)dx < +\infty,
	\end{equation*}
	for some $\delta=\delta(u) >0$. \\
	The Orlicz-space $L^{\Phi}(Q)$ is the smallest vector space containing the Orlicz-class.\\
	On the space $L^{\Phi}(Q)$ we define two equivalent norms:
	\begin{itemize}
		\item[i)] the Luxemburg norm, 
		\begin{equation*}
			\lVert u \rVert_{L^{\Phi}(Q)} = \inf \left\{ \delta>0 \; : \; \int_{Q}^{} \Phi\left(\dfrac{|u(x)|}{\delta} \right)dx  \leq 1 \right\}, \quad \forall u \in L^{\Phi}(Q),
		\end{equation*}
		\item[ii)] the Orlicz norm, 
		\begin{equation*}
			\begin{array}{rcc}
				\lVert u \rVert_{L^{(\Phi)}(Q)}& =& \sup \left\{ \left| \displaystyle  \int_{Q} u(x)v(x) dx \right| \; : \; v \in L^{\widetilde{\Phi}}(Q) \; \textup{and} \; \|v\|_{L^{\widetilde{\Phi}}(Q)} \leq 1 \right\}, \\
				& &  \\
				& &  \forall u \in L^{\Phi}(Q),
			\end{array}
		\end{equation*}
	\end{itemize}
	which makes it a Banach space.  \\
	Now we will give some properties of Orlicz spaces and we will refer to \cite{adam} for more details.
	Assume that $\Phi \in \Delta_{2}$. Then:
	\begin{itemize}
		\item[i)] $\mathcal{D}(Q)$ is dense in $L^{\Phi}(Q)$
		\item[ii)] $L^{\Phi}(Q)$ is separable and reflexive whenever $\widetilde{\Phi}\in \Delta_{2}$,
		\item[iii)] the dual of $L^{\Phi}(Q)$ is identified with $L^{\widetilde{\Phi}}(Q)$, and the dual norm on $L^{\widetilde{\Phi}}(Q)$ is equivalent to $\lVert \cdot \rVert_{L^{\widetilde{\Phi}}(Q)}$,
		\item[iv)] given $u \in L^{\Phi}(Q)$ and $v \in L^{\widetilde{\Phi}}(Q)$ the product $uv$ belongs to $L^{1}(Q)$ with the generalized H\"{o}lder's inequality 
		\begin{equation*}
			\left| \int_{Q} u(x)v(x)dx \right| \leq 2\, \lVert u \rVert_{L^{\Phi}(Q)} \, \lVert v \rVert_{L^{\widetilde{\Phi}}(Q)},
		\end{equation*}
		\item[v)] given $v \in L^{\Phi}(Q)$ the linear functional $L_{v}$ on $L^{\widetilde{\Phi}}(Q)$ defined by 
		\begin{equation*}
			L_{v}(u) = \int_{Q} u(x)v(x) dx, \quad u \in L^{\widetilde{\Phi}}(Q),
		\end{equation*} 
		belongs to the dual $[L^{\widetilde{\Phi}}(Q)]'$ with $\lVert v \rVert_{L^{(\Phi)}(Q)} \leq \lVert L_{v} \rVert_{[L^{\widetilde{\Phi}}(Q)]'} \leq 2 \lVert v \rVert_{L^{(\Phi)}(Q)}$,
		\item[vi)] $L^{\Phi}(Q) \hookrightarrow L^{1}(Q) \hookrightarrow	L^{1}_{\textup{loc}}(Q) \hookrightarrow \mathcal{D'}(Q)$,
		\item[vii)] given two Young's functions $\Phi$ and $\Psi$, we have the continuous embbeding \\ $L^{\Phi}(Q) \hookrightarrow L^{\Psi}(Q)$ if and only if $\Phi \succ \Psi$ near infinity,
		\item[viii)] the product space $L^{\Phi}(Q)^{N}= L^{\Phi}(Q)\times L^{\Phi}(Q) \times \cdots \times L^{\Phi}(Q)$, ($N$-times), is endowed with the norm 
		\begin{equation*}
			\lVert \textbf{v} \rVert_{L^{(\Phi)}(Q)^{N}} = \sum_{i=1}^{N} \lVert v_{i} \rVert_{L^{(\Phi)}(Q)}, \quad \textbf{v}=(v_{i}) \in L^{\Phi}(Q)^{N}.
		\end{equation*}
		\item[ix)] \label{property9}  If $Q_{1} \subset \mathbb{R}^{N_{1}}$ and $Q_{2} \subset \mathbb{R}^{N_{2}}$ are two bounded open sets with $N_{1}+N_{2}=N$, and if $u \in L^{\Phi}(Q_{1}\times Q_{2})$,
		then for almost all $x_{1}\in Q_{1}$, $u(x_{1}, \cdot) \in L^{\Phi}(Q_{2})$. If in addition $\widetilde{\Phi} \in \Delta'$ associate with a constant $\beta$, then the function $u$ belongs to $L^{\Phi}(Q_{1}, L^{\Phi}(Q_{2}))$, with	
		\begin{equation}\label{be1}
			\|u\|_{L^{\Phi}(Q_{1}, L^{\Phi}(Q_{2}))} \leq \iint_{Q_{1}\times Q_{2}} \Phi(|u(x_{1},x_{2})|) dx_{1}dx_{2} + \beta.
		\end{equation}
	\end{itemize}
	
	\subsection{Measurable dynamics and Ergodic theory in Orlicz's spaces}\label{c1sub3S}
	
	Let $(\Omega, \mathscr{M}, \mu)$ be a measure space with probability measure $\mu$. We define an $N$-dimensional dynamical system on $\Omega$ as a family $\{ T(x) : x \in \mathbb{R}^{N} \}$	of invertible maps,
	\begin{equation*}
		T(x) : \Omega \longrightarrow \Omega
	\end{equation*}
	such that for each $x \in \mathbb{R}^{N}$ both $T(x)$ and $T(x)^{-1}$ are measurable, and such that the following properties hold: 
	\begin{itemize}
		\item[(i)]\textit{(group property)} $T(0)=$ identity map on $\Omega$ and for all $x_{1},x_{2} \in \mathbb{R}^{N}$,
		\begin{equation*}
			T(x_{1}+x_{2})=T(x_{1})\circ T(x_{2}),
		\end{equation*}
		\item[(ii)]\textit{(invariance)} for each $x \in \mathbb{R}^{N}$, the map $T(x) : \Omega \rightarrow \Omega$ is measurable on $\Omega$ and $\mu$-measure preserving, i.e. $\mu(T(x)F)=\mu(F)$, for all $F \in \mathscr{M}$,
		\item[(iii)]\textit{(measurability)} for each $F \in \mathscr{M}$, the set $\{ (x,\omega) \in \mathbb{R}^{N}\times \Omega : T(x)\omega \in F \}$ is measurable with respect to the product $\sigma$-algebra $\mathscr{L}\otimes \mathscr{M}$, where $\mathscr{L}$ is the $\sigma$-algebra of Lebesgue measurable sets. 
	\end{itemize}
	If $\Omega$ is a compact topological space, by a continuous $N$-dynamical system on $\Omega$, we mean any family of mappings $\{ T(x) : \Omega \rightarrow \Omega,\; x \in \mathbb{R}^{N} \}$  satisfying the above group property $(i)$ and the following condition :
	\begin{itemize}
		\item[(iv)] \textit{(continuity)} the mapping $(x, \omega) \mapsto T(x)\omega$ is continuous from $\mathbb{R}^{N}\times \Omega$ to $\Omega$.
	\end{itemize} 
		We have the following proposition which is inspired by Pankov's book \cite[Proposition 3.11]{pankov}.  
	\begin{proposition}(group of isometries)\\
		Let $\Phi \in \Delta_{2}$ be a Young function. An $N$-dimensional dynamical system $\{ T(x) : \Omega\rightarrow \Omega\; ; \; x \in \mathbb{R}^{N} \}$ induces an $N$-parameter group of isometries $\{ U(x) : L^{\Phi}(\Omega)\rightarrow L^{\Phi}(\Omega)\; ; \; x \in \mathbb{R}^{N} \}$ defined by
		\begin{equation}\label{group1S}
			\left(U(x)f  \right)(\omega) = f(T(x)\omega), \quad \forall\, f \in L^{\Phi}(\Omega)
		\end{equation}
		which is strongly continuous, i.e.
		\begin{equation}\label{lem38}
			\lim_{x \to 0} \|U(x)f - f\|_{L^{\Phi}(\Omega)}=0, \quad \forall\, f \in L^{\Phi}(\Omega).
		\end{equation}
	\end{proposition}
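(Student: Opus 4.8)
The plan is to establish the three assertions in turn: that each $U(x)$ maps $L^{\Phi}(\Omega)$ isometrically into itself, that $\{U(x)\}$ is a group, and finally that it is strongly continuous, the last being the only delicate point. For the isometry I would work directly with the modular. Fix $f\in L^{\Phi}(\Omega)$ and $\delta>0$; by the invariance property (ii) the map $T(x)$ is $\mu$-measure preserving, so its pushforward of $\mu$ is $\mu$ itself and the change of variables $\int_{\Omega} g(T(x)\omega)\,d\mu=\int_{\Omega} g(\omega)\,d\mu$ holds for every nonnegative measurable $g$. Taking $g(\omega)=\Phi(|f(\omega)|/\delta)$ gives $\int_{\Omega}\Phi(|U(x)f(\omega)|/\delta)\,d\mu=\int_{\Omega}\Phi(|f(\omega)|/\delta)\,d\mu$ for all $\delta>0$, so the admissible sets in the two infima defining the Luxemburg norms coincide and $\|U(x)f\|_{L^{\Phi}(\Omega)}=\|f\|_{L^{\Phi}(\Omega)}$; in particular $U(x)$ maps $L^{\Phi}(\Omega)$ into itself. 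The group law is immediate from that of $T$: $U(0)=\mathrm{Id}$ since $T(0)=\mathrm{Id}$, and for $x_{1},x_{2}\in\mathbb{R}^{N}$ one reads off $(U(x_{1})U(x_{2})f)(\omega)=f(T(x_{2})T(x_{1})\omega)=f(T(x_{1}+x_{2})\omega)$, whence $U(x_{1})U(x_{2})=U(x_{1}+x_{2})$ and $U(x)^{-1}=U(-x)$.

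For the strong continuity (\ref{lem38}) I would first reduce to a dense subclass. Since every $U(x)$ is an isometry the family is uniformly bounded, so a standard three-$\epsilon$ argument shows it suffices to check $\|U(x)g-g\|_{L^{\Phi}(\Omega)}\to 0$ for $g$ in a dense subset. As $\Phi\in\Delta_{2}$, the bounded measurable functions are dense in $L^{\Phi}(\Omega)$, so I take $g\in L^{\infty}(\Omega)$. Here the $\Delta_{2}$ hypothesis is used a second, decisive, time: for a $\Delta_{2}$ Young function, convergence in the Luxemburg norm is equivalent to modular convergence, so it is enough to prove $\int_{\Omega}\Phi(|U(x)g-g|)\,d\mu\to 0$. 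Since $|U(x)g-g|\le 2\|g\|_{L^{\infty}(\Omega)}$ and $\Phi$ is continuous and nondecreasing, the integrand is dominated by the constant $\Phi(2\|g\|_{L^{\infty}(\Omega)})$, which is $\mu$-integrable on the probability space $\Omega$; by dominated convergence it then remains only to produce convergence $U(x)g\to g$ in $\mu$-measure.

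This last point, the convergence in measure of $U(x)g$ to $g$, equivalently the continuity $\mu(T(x)F\,\triangle\,F)\to 0$ for $F\in\mathscr{M}$ obtained by testing on $g=\chi_{F}$, is the heart of the matter and the only place where the measurability axiom (iii) enters. For this I would invoke the classical fact, valid on $L^{2}(\Omega)$ in the spirit of Pankov, that a measure-preserving system satisfying (iii) induces a strongly continuous group: (iii) together with Fubini makes $(x,\omega)\mapsto f(T(x)\omega)$ jointly measurable, and strong $L^{2}$-continuity on characteristic functions is exactly $\mu(T(x)F\,\triangle\,F)\to 0$. Alternatively, to be self-contained one can run the averaging argument directly: the orbit map $x\mapsto U(x)f$ is strongly measurable by Pettis (using joint measurability and the separability of $L^{\Phi}(\Omega)$ that accompanies $\Phi\in\Delta_{2}$), the ball averages $A_{r}f=|B_{r}|^{-1}\int_{B_{r}}U(z)f\,dz$ have orbit map $U(x)A_{r}f=|B_{r}|^{-1}\int_{x+B_{r}}U(w)f\,dw$ continuous in $x$ because $\|U(\cdot)f\|_{L^{\Phi}(\Omega)}$ is constant and $|(x+B_{r})\triangle(x'+B_{r})|\to 0$, and the vector-valued Lebesgue differentiation theorem combined with the isometry yields $A_{r}f\to f$; the vectors with continuous orbit then form a dense closed subspace, hence all of $L^{\Phi}(\Omega)$. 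Either route, I expect the main obstacle to be precisely the transfer from the classical in-measure continuity of the flow to Luxemburg-norm continuity (and, for a self-contained version, the justification of strong measurability and of the differentiation step directly in the Orlicz norm); the isometry and the group law are routine.
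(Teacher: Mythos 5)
Your proof is correct, but the route to strong continuity is genuinely different from the paper's. The isometry step is identical (the modular computation using $\mu$-invariance of $T$, giving equality of Luxemburg norms), and both proofs reduce the continuity claim to the dense subspace $L^{\infty}(\Omega)$; the divergence is in how the limit $\|U(x)g-g\|_{L^{\Phi}(\Omega)}\to 0$ is then obtained. The paper works on the dual side: it expresses the Orlicz norm of $U(x)f-f$ as a supremum of pairings against $g$ in the unit ball of $L^{\widetilde{\Phi}}(\Omega)$, rewrites each pairing (via invariance and Fubini) as an average over the unit ball of $\mathbb{R}^{N}$ of the realizations $y\mapsto f(T(y)\omega)$, and concludes from the strong continuity of translations in $L^{1}_{\mathrm{loc}}(\mathbb{R}^{N})$ plus dominated convergence. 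You work on the primal side: $\Delta_{2}$ reduces norm convergence to modular convergence, the uniform bound $|U(x)g-g|\le 2\|g\|_{L^{\infty}(\Omega)}$ and dominated convergence reduce that to convergence in $\mu$-measure, and the in-measure continuity of the flow is either cited as the classical fact for measurable measure-preserving systems or re-derived by the von Neumann averaging argument (Pettis measurability, ball averages, vector-valued Lebesgue differentiation, and the closed subspace of vectors with continuous orbits). Your approach buys something concrete: since you control the modular, hence the norm, directly, you never take a supremum over test functions; the paper, as written, proves convergence of the pairing for each \emph{fixed} $g$ and then passes to the supremum over the unit ball, a step which by itself only yields weak continuity and would require either uniformity in $g$ or the (true but unstated) theorem that weakly continuous groups of isometries are strongly continuous. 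The cost of your route is the appeal to the classical $L^{2}$/in-measure result --- or, in the self-contained variant, to the separability of $L^{\Phi}(\Omega)$, which the paper does assume later --- whereas the paper's realization argument is in effect a direct proof of exactly that measure-theoretic core.
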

	\begin{proof}
		Firstly we will show that if $f \in L^{\Phi}(\Omega)$ then $U(x)f \in L^{\Phi}(\Omega)$, for all $x \in \mathbb{R}^{N}$. Since $f$ and $T$ are measurable on $\Omega$ then $U(x)f= f\circ T(x)$ is also measurable on $\Omega$. Let now $\mu_{T}$ the range measure of $\mu$ by $T$. Since the measure $\mu$ is invariant under the dynamical system $T$, we have 
		\begin{equation*}
			\begin{array}{rcl}
				\displaystyle 	\int_{\Omega} \Phi(|(U(x)f)(\omega)|) d\mu & = & \displaystyle\int_{\Omega} \Phi(|f(T(x)\omega)|) d\mu \\
				& = & \displaystyle \int_{\Omega} \Phi(|f(\omega)|) d\mu_{T} \\
				& = & \displaystyle \int_{\Omega} \Phi(|f(\omega)|) d\mu \\
				& < & \infty,
			\end{array}
		\end{equation*}
		thus $U(x)f \in L^{\Phi}(\Omega)$ and it results that $\|U(x)f\|_{\Phi,\Omega} = \|f\|_{\Phi,\Omega}$. \\
		Secondly, we will prove (\ref{lem38}). For that, it suffices to verify (\ref{lem38}) for any $f\in L^{\infty}(\Omega)$, since $L^{\infty}(\Omega)$ is dense in $L^{\Phi}(\Omega)$. So let $f \in L^{\infty}(\Omega)$ and $x \in \mathbb{R}^{N}$. Let  $g \in L^{\widetilde{\Phi}}(\Omega)$ such that $\|g\|_{\widetilde{\Phi},\Omega}\leq 1$. By the fact that the measure $\mu$ is invariant under the dynamical system $T$ and the Fubini's theorem, we have 
		\begin{equation*}
			\begin{array}{rcl}
				& & \left|\displaystyle \int_{\Omega} [f(T(x)\omega) - f(\omega) ]\cdot g(\omega) d\mu \right| \\
				& \leq&  \displaystyle  \int_{\Omega} |f(T(x)\omega) - f(\omega) |\cdot |g(\omega)| d\mu  \\
				& = & \frac{1}{|B_{N}|} \displaystyle\int_{\Omega}\int_{B_{N}} |f(T(x+y)\omega) - f(T(y)\omega) |\cdot |g(T(y)\omega)| dyd\mu, 
			\end{array}
		\end{equation*}	
		where $|B_{N}|$ denotes the volume of the unit ball $B_{N}$ in $\mathbb{R}^{N}$. Now we recall that the translations form a strongly continuous group of operators in $L^{1}_{loc}(\mathbb{R}^{N})$. Therefore, the Lebesgue dominated convergence theorem implies
		that $\left| \int_{\Omega} [f(T(x)\omega) - f(\omega) ]\cdot g(\omega) d\mu \right| \to 0$, as $x\to 0$.
		Hence
		\begin{equation*}
			\begin{array}{rcl}
				& &	\| f\circ T(x) - f \|_{(\Phi),\Omega} \\
				& =&  \sup \left\{ \left| \displaystyle \int_{\Omega} [f(T(x)\omega) - f(\omega) ]\cdot g(\omega) d\mu \right| \, ; \, g \in L^{\widetilde{\Phi}}(\Omega) \, \textup{and}\, \|g\|_{\widetilde{\Phi},\Omega}\leq 1  \right\} \\
				& \longrightarrow & 0,
			\end{array}
		\end{equation*}
		as $x\to 0$.
	\end{proof} 
	Now, we consider  based on (\ref{group1S})  the $N$ 1-parameter group of isometries $\{ U(t\vec{e_{i}}) : L^{\Phi}(\Omega)\rightarrow L^{\Phi}(\Omega)\; ; \; t \in \mathbb{R} \}$ where, for $i=1, \cdots, N$, $\vec{e_{i}}= (\delta_{ij})_{1\leq j\leq N}$, $\delta_{ij}$ being the Kronecker symbol. We denote by $D_{i,\Phi}$ the generator (see, e.g. \cite[Page 376]{rudin}) of $U(t\vec{e_{i}})$ and by $\textbf{D}_{i,\Phi}$ its domain. Thus, for $f \in L^{\Phi}(\Omega)$, $f$ is in $\mathbf{D}_{i,\Phi}$ if and only if the limit $D_{i,\Phi}f$ defined by 
	\begin{equation*}
		D_{i,\Phi}f = \lim_{t \to 0} \dfrac{U(t\vec{e_{i}})f - f}{t}
	\end{equation*}  
	exists strongly in $L^{\Phi}(\Omega)$, i.e. $\displaystyle \lim_{t \to 0} \frac{1}{t}\left\|U(t\vec{e_{i}})f - f \right\|_{L^{\Phi}(\Omega)} = 0$. 
	In this case, $D_{i,\Phi}f$ is called $i$-th stochastic derivative of $f$.
	\begin{remark}
		If $\Omega = Y \subset \mathbb{R}^{N}$, $d\mu = dy$ and that we consider the $N$-dimensional dynamical system of translation $\{ T(x) : Y\rightarrow Y;  \; x \in \mathbb{R}^{N} \}$ defined by 
		\begin{equation*}
			T(x)y = (x+y)\textup{mod\,1},
		\end{equation*}
		then the $i$-th stochastic derivative $D_{i,\Phi}f$ of an element $f\in L^{\Phi}(Y)$ is also the $i$-th partial derivative of $f$.
	\end{remark}
	
	For a multi-index $\alpha= (\alpha_{1}, \cdots, \alpha_{N}) \in \mathbb{N}^{N}$, one can naturally define higher order stochastic derivatives by setting:
	\begin{equation*}
		D_{\Phi}^{\alpha} = D_{1,\Phi}^{\alpha_{1}}\circ \cdots\circ D_{N,\Phi}^{\alpha_{N}} \;\;\; \textup{and} \;\;\;	D_{i,\Phi}^{\alpha_{i}} = 
		\underset{\alpha_{i}-times}{\underbrace{D_{i,\Phi}\circ \cdots\circ D_{i,\Phi}}}, \quad i=1, \cdots, N.
	\end{equation*}
	Now we need to define the stochastic analog of the smooth functions on $\mathbb{R}^{N}$.\\
	We set $\displaystyle\textbf{D}_{\Phi}(\Omega) = \bigcap_{i=1}^{N} \textbf{D}_{i,\Phi}(\Omega)$, and define 
	\begin{equation*}
		\textbf{D}_{\Phi}^{\infty}(\Omega) = \left\{ f \in L^{\Phi}(\Omega) \; : \; D^{\alpha}_{\Phi}f \in \textbf{D}_{\Phi}(\Omega), \; \forall \alpha \in \mathbb{N}^{N}  \right\}.
	\end{equation*}
	We recall that (see \cite[Page 99]{sango}), for the Lebesgue spaces $L^{p}(\Omega)$, with $1\leq p \leq \infty$, we have 
	$\displaystyle\textbf{D}_{p}(\Omega) = \bigcap_{i=1}^{N} \textbf{D}_{i,p}$ and 
	\begin{equation*}
		\textbf{D}_{p}^{\infty}(\Omega) = \left\{ f \in L^{p}(\Omega) \; : \; D^{\alpha}_{p}f \in \textbf{D}_{p}(\Omega), \; \forall \alpha \in \mathbb{N}^{N}  \right\},
	\end{equation*}
	where $D^{\alpha}_{p}$ stands for stochastic higher order derivative in $L^{p}(\Omega)$. 
	Furthermore, for $p=\infty$ it is a fact that each element of $\textbf{D}_{\infty}^{\infty}(\Omega)$ possesses stochastic derivatives of any order that are bounded. It is noted by the suggestive symbol $\mathcal{C}^{\infty}(\Omega)$ and is endowed with its natural topology defined by the family of seminorms 
	\begin{equation*}
		N_{n}(f) = \sup_{|\alpha|\leq n} \sup_{\omega\in\Omega} |D_{\infty}^{\alpha}f(\omega)|, \;\; \textup{where} \;\, f\in \mathcal{C}^{\infty}(\Omega) \;\, \textup{and} \;\, |\alpha|= \alpha_{1}+ \cdots + \alpha_{N}. 
	\end{equation*}
		Note that the space $\mathcal{C}^{\infty}(\Omega)$ is a generalization to probability sets of the usual space $\mathcal{C}^{\infty}(\mathbb{R}^{N})$ of smooth functions.  
	Let now $K \in \mathcal{C}^{\infty}_{0}(\mathbb{R}^{N})$ be a nonnegative even function such that $\int_{\mathbb{R}^{N}} K(x)dx = 1$. \\ We set $K_{\delta}(x)= \delta^{-N}K(x/\delta)$, with $\delta >0$ and then define the operator $J_{\delta}$ of $L^{\Phi}(\Omega)$ into $L^{\Phi}(\Omega)$ by 
	\begin{equation*}
		J_{\delta}f = \int_{\mathbb{R}^{N}} K_{\delta}(y) U(y)f \, dy. 
	\end{equation*}  
	As in \cite[page 138]{pankov}, for all $f \in L^{\Phi}(\Omega)$, the function $J_{\delta}f \in \mathcal{C}^{\infty}(\Omega)$ and we have the following lemma.
	\begin{lemma}\label{rem3}
		For any $f \in L^{\Phi}(\Omega)$, we have 
		\begin{equation}\label{rem6}
			\lim_{\delta \to 0} \|J_{\delta}f - f\|_{L^{\Phi}(\Omega)} = 0.
		\end{equation}
	\end{lemma}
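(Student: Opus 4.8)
The plan is to exploit the strong continuity \eqref{lem38} of the group $\{U(x)\}$ together with the fact that $K_{\delta}$ is a nonnegative kernel of total mass one, and to reduce the claim to a single application of the dominated convergence theorem. Since $\int_{\mathbb{R}^{N}} K_{\delta}(y)\,dy = \int_{\mathbb{R}^{N}} K(x)\,dx = 1$, the first step is to write $f = \int_{\mathbb{R}^{N}} K_{\delta}(y)\,f\,dy$ and subtract, so as to obtain the identity
\[
	J_{\delta}f - f = \int_{\mathbb{R}^{N}} K_{\delta}(y)\bigl( U(y)f - f \bigr)\,dy,
\]
the integral being understood as a Bochner integral valued in the Banach space $L^{\Phi}(\Omega)$. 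This is legitimate because strong continuity makes the map $y \mapsto U(y)f$ continuous from $\mathbb{R}^{N}$ into $L^{\Phi}(\Omega)$, hence strongly measurable, while the isometry property $\|U(y)f\|_{L^{\Phi}(\Omega)} = \|f\|_{L^{\Phi}(\Omega)}$ established in the preceding proposition, together with the compact support of $K_{\delta}$, guarantees its integrability.

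Next, I would apply the triangle inequality for the Bochner integral (the norm of the integral does not exceed the integral of the norms) and then perform the change of variables $y = \delta z$, to arrive at
\[
	\|J_{\delta}f - f\|_{L^{\Phi}(\Omega)} \leq \int_{\mathbb{R}^{N}} K_{\delta}(y)\,\|U(y)f - f\|_{L^{\Phi}(\Omega)}\,dy = \int_{\mathbb{R}^{N}} K(z)\,\|U(\delta z)f - f\|_{L^{\Phi}(\Omega)}\,dz.
\]
The mass-one normalization of $K$ is used precisely here to absorb the subtracted term into the kernel, and the isometry property is what keeps the change of variables harmless.

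Finally, I would pass to the limit $\delta \to 0$ under the last integral sign. For each fixed $z \in \mathbb{R}^{N}$, the strong continuity \eqref{lem38} gives $\|U(\delta z)f - f\|_{L^{\Phi}(\Omega)} \to 0$ as $\delta \to 0$, while the integrand is dominated, uniformly in $\delta$, by $2\,\|f\|_{L^{\Phi}(\Omega)}\,K(z)$, which is integrable since $\int_{\mathbb{R}^{N}} K = 1$; here one uses once more that $U(\delta z)$ is an isometry, so that $\|U(\delta z)f - f\|_{L^{\Phi}(\Omega)} \leq \|U(\delta z)f\|_{L^{\Phi}(\Omega)} + \|f\|_{L^{\Phi}(\Omega)} = 2\,\|f\|_{L^{\Phi}(\Omega)}$. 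The dominated convergence theorem then yields \eqref{rem6}. I expect the only delicate point — and the main, though mild, obstacle — to be the rigorous justification of the vector-valued identity and of the triangle inequality for the $L^{\Phi}(\Omega)$-valued Bochner integral; once these are in place, the argument is the classical mollification proof transported verbatim from the $L^{p}$ setting to the Orlicz setting, the two structural facts it relies on (strong continuity and isometry) being available here thanks to $\Phi \in \Delta_{2}$.
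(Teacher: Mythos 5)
Your proof is correct, and it is built on the same two pillars as the paper's: the mass-one kernel identity $J_{\delta}f - f = \int_{\mathbb{R}^{N}} K_{\delta}(y)\,(U(y)f - f)\,dy$ followed by the rescaling $y = \delta z$, and the strong continuity (\ref{lem38}) of the group established in the preceding proposition. Where you differ is in the technical packaging. You treat the integral as an $L^{\Phi}(\Omega)$-valued Bochner integral, invoke its triangle inequality, and finish with dominated convergence, the domination $\|U(\delta z)f - f\|_{L^{\Phi}(\Omega)} \leq 2\|f\|_{L^{\Phi}(\Omega)}$ coming from the isometry property. The paper avoids vector-valued integration altogether: it pairs $J_{\delta}f - f$ against an arbitrary $g \in L^{\widetilde{\Phi}}(\Omega)$ with $\|g\|_{\widetilde{\Phi},\Omega} \leq 1$, uses Fubini and the generalized H\"{o}lder inequality to obtain $\|J_{\delta}f - f\|_{L^{\Phi}(\Omega)} \leq 2 \sup_{y \in \varSigma} \|U(\delta y)f - f\|_{L^{\Phi}(\Omega)}$ with $\varSigma = \textup{supp}\,K$, and then concludes from strong continuity together with compactness of $\varSigma$ — no dominated convergence needed, since $\delta\varSigma$ shrinks uniformly to the origin. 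The duality route thus sidesteps exactly the point you flag as delicate (setting up the $L^{\Phi}$-valued Bochner integral and its triangle inequality), at the cost of a harmless factor $2$ from the dual-norm formulation; your route is cleaner once the Bochner machinery is granted, and your final limit step is interchangeable with the paper's, since the compact support of $K$ makes the sup-over-$\varSigma$ argument and the dominated convergence argument equally available. One small remark: your comment that the isometry property "keeps the change of variables harmless" is misplaced — the substitution $y = \delta z$ happens in the scalar integral over $\mathbb{R}^{N}$ and needs nothing; the isometry is used only for the uniform bound in the dominated convergence step.
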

	\begin{proof}
		Let $g \in L^{\widetilde{\Phi}}(\Omega)$. Then, 
		\begin{equation*}
			\begin{array}{rcl}
				& &	\left|\displaystyle \int_{\Omega} (J_{\delta}f(\omega) - f(\omega))g(\omega) d\mu  \right| \\
				& \leq & \displaystyle \int_{\mathbb{R}^{N}} K_{\delta}(y) \left( \int_{\Omega} |U(y)f(\omega) - f(\omega) |\cdot |g(\omega)| d\mu  \right) dy \\
				& = & \displaystyle \int_{\mathbb{R}^{N}} K(y) \left( \int_{\Omega} |U(\delta y)f(\omega) - f(\omega) |\cdot |g(\omega)| d\mu  \right) dy \\
				& \leq & 2 \|g\|_{L^{\widetilde{\Phi}}(\Omega)} \displaystyle \int_{\varSigma} K(y) \|U(\delta y)f - f \|_{L^{\Phi}(\Omega)} dy  \\
				& \leq & 2 \|g\|_{L^{\widetilde{\Phi}}(\Omega)} \sup_{y\in \varSigma} \|U(\delta y)f - f \|_{L^{\Phi}(\Omega)}, 
			\end{array}
		\end{equation*}
		where $\varSigma = \textup{supp} K$.  Thus,
		\begin{equation}\label{rem5}
			\begin{array}{rcl}
				\|J_{\delta}f - f\|_{L^{\Phi}(\Omega)} & = & \sup_{\|g\|_{\widetilde{\Phi},\Omega} \leq 1} \left|\displaystyle \int_{\Omega} (J_{\delta}f(\omega) - f(\omega))g(\omega) d\mu  \right| \\
				& \leq & 2 \sup_{y\in \varSigma} \|U(\delta y)f - f \|_{L^{\Phi}(\Omega)}.
			\end{array}
		\end{equation}
		Since $\varSigma$ is compact and the group $U(x)$ for $x \in \mathbb{R}^{N}$ is strongly continuous in $L^{\Phi}(\Omega)$, then (\ref{rem5}) implies (\ref{rem6}) when $\delta \to 0$. 
	\end{proof}
	
	\noindent From the Lemma \ref{rem3}, we deduce the following proposition.
	\begin{proposition}
		Let $\Phi \in \Delta_{2}$ be a Young function. The space $ \mathcal{C}^{\infty}(\Omega)$ is dense in $L^{\Phi}(\Omega)$.
	\end{proposition}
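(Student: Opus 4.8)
The plan is to exhibit, for each $f \in L^{\Phi}(\Omega)$, an explicit sequence of elements of $\mathcal{C}^{\infty}(\Omega)$ converging to $f$ in the Luxemburg norm. The natural candidate is supplied by the mollification operator $J_{\delta}$ introduced just above: I would take the family $(J_{\delta}f)_{\delta>0}$ and let $\delta \to 0$. Since $\mathcal{C}^{\infty}(\Omega)$ is a linear subspace of $L^{\Phi}(\Omega)$, producing such an approximating family for every $f$ is precisely what is needed to conclude density.

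First I would record the membership step: by the remark preceding Lemma \ref{rem3} (following \cite[page 138]{pankov}), the mollified function $J_{\delta}f$ belongs to $\mathcal{C}^{\infty}(\Omega)$ for every $f \in L^{\Phi}(\Omega)$ and every $\delta>0$. This guarantees that the approximants already lie in the target space. Next I would invoke Lemma \ref{rem3} directly, which asserts that $\lim_{\delta \to 0}\|J_{\delta}f - f\|_{L^{\Phi}(\Omega)} = 0$. Combining the two facts, given any $f \in L^{\Phi}(\Omega)$ and any $\varepsilon>0$ one can select $\delta>0$ small enough that $\|J_{\delta}f - f\|_{L^{\Phi}(\Omega)} < \varepsilon$ while $J_{\delta}f \in \mathcal{C}^{\infty}(\Omega)$, which is exactly the statement that $\mathcal{C}^{\infty}(\Omega)$ is dense in $L^{\Phi}(\Omega)$.

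I expect essentially no obstacle here, because the substantive analytic content has already been discharged upstream: the strong continuity of the group of isometries $\{U(x)\}$ in $L^{\Phi}(\Omega)$ (guaranteed by the group-of-isometries proposition under the hypothesis $\Phi \in \Delta_{2}$) is what drives the uniform estimate $\|J_{\delta}f - f\|_{L^{\Phi}(\Omega)} \leq 2\sup_{y\in\varSigma}\|U(\delta y)f - f\|_{L^{\Phi}(\Omega)}$ obtained in the proof of Lemma \ref{rem3}. The only point deserving a word of care is that density is being asserted with respect to the Luxemburg norm topology, so one should confirm that the convergence furnished by Lemma \ref{rem3} is genuine norm convergence, which it is. Consequently the proposition follows at once as a corollary of Lemma \ref{rem3} together with the regularity assertion $J_{\delta}f \in \mathcal{C}^{\infty}(\Omega)$, and the proof reduces to stitching these two ingredients together.
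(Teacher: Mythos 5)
Your proposal is correct and follows essentially the same route as the paper: the paper's own proof consists precisely of combining the fact that $J_{\delta}f \in \mathcal{C}^{\infty}(\Omega)$ for every $f \in L^{\Phi}(\Omega)$ with the norm convergence $\|J_{\delta}f - f\|_{L^{\Phi}(\Omega)} \to 0$ from Lemma \ref{rem3}, which is exactly your two-step argument. There is no gap.
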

	By a stochastic distribution on $\Omega$ is meant any continuous linear mapping from $\mathcal{C}^{\infty}(\Omega)$ to the real field $\mathbb{R}$. We denote the space of stochastic distributions by $(\mathcal{C}^{\infty}(\Omega))'$.
	\\
	For any $\alpha \in \mathbb{N}^{N}$, we define the stochastic weak derivative of $f \in (\mathcal{C}^{\infty}(\Omega))'$ as follows:
	\begin{equation*}
		(D^{\alpha}f)(\phi) = (-1)^{|\alpha|}f(D^{\alpha}\phi), \quad \forall \phi \in \mathcal{C}^{\infty}(\Omega).
	\end{equation*}
	Let $\Phi \in \Delta_{2}$ be a Young function and $\widetilde{\Phi}$ its conjugate.
	As $\mathcal{C}^{\infty}(\Omega)$ is dense in $L^{\widetilde{\Phi}}(\Omega)$, it is immediate that $(L^{\widetilde{\Phi}}(\Omega))' = L^{\Phi}(\Omega) \subset (\mathcal{C}^{\infty}(\Omega))'$, so that one may define the stochastic weak derivative of any $f \in L^{\Phi}(\Omega)$, and it verifies the functional equation: 
	\begin{equation}\label{deri1}
		(D^{\alpha}f)(\phi) = (-1)^{|\alpha|} \int_{\Omega} f\, D^{\alpha}\phi\,d\mu, \quad \forall \phi \in \mathcal{C}^{\infty}(\Omega).
	\end{equation}	
	In particular for $f \in \textbf{D}_{i,\Phi}$, we have
	\begin{equation}\label{deri2}
		\int_{\Omega}\phi D_{i,\Phi}f \,d\mu = - \int_{\Omega} f\, D_{i,\infty}\phi\,d\mu, \quad \forall \phi \in \mathcal{C}^{\infty}(\Omega).
	\end{equation}
	So that we may identify $ D_{i,\Phi}$ with $D^{\alpha_{i}}$, where $\alpha_{i} = (\delta_{ij})_{1\leq j\leq N}$.  
		Note that, formulas (\ref{deri1}) and (\ref{deri2}) are analogous to Sango-Woukeng paper \cite{sango} and the same notation as them has been adopted in the entire subsection. 
	\\
	Conversely, if $f \in L^{\Phi}(\Omega)$ is such that there exists $f_{i} \in L^{\Phi}(\Omega)$ with \\ $(D^{\alpha_{i}}f)(\phi) = - \int_{\Omega} f_{i} \phi\ d\mu$ for all $\phi \in \mathcal{C}^{\infty}(\Omega)$, then $f \in \textbf{D}_{i,\Phi}$ and $D_{i,\Phi}f = f_{i}$. 
	\\
	Endowing $\displaystyle\textbf{D}_{\Phi}(\Omega) = \bigcap_{i=1}^{N} \textbf{D}_{i,\Phi}(\Omega)$ with the natural graph norm 
	\begin{equation*}
		\|f\|_{\textbf{D}_{\Phi}(\Omega)} = \|f\|_{L^{\Phi}(\Omega)} + \sum_{i=1}^{N} \|D_{i,\Phi}f\|_{L^{\Phi}(\Omega)}, \quad \forall f \in \textbf{D}_{\Phi}(\Omega),
	\end{equation*}
	we obtain a Banach space representing the stochastic generalization of the classical Orlicz-Sobolev space $W^{1}L^{\Phi}(\mathbb{R}^{N})$, and so, we denote it by $W^{1}L^{\Phi}(\Omega)$.  \\
	Now we recall some definitions about the invariance of functions and ergodic dynamical system.  Note that these definitions are a generalization to Orlicz-Sobolev spaces $W^{1}L^{\Phi}$ of the definitions obtained in \cite[Page 99]{sango} for the case of Lebesgue-Sobolev spaces $W^{1,p}$. \label{pageS} \\  	A function $f \in L^{\Phi}(\Omega)$ is said to be invariant for the dynamical system $T$ (relative to $\mu$) if for any $x \in \mathbb{R}^{N}$
	\begin{equation*}
		f\circ T(x) = f, \quad \mu-a.e. \;\, \textup{on} \;\, \Omega.
	\end{equation*}
	We denote  $I_{nv}^{\Phi}(\Omega)$ the set of functions in $L^{\Phi}(\Omega)$ that are invariant for $T$. 
	The set $I_{nv}^{\Phi}(\Omega)$ is a closed vector subspace of $L^{\Phi}(\Omega)$   
		and when $\Phi(t) = \frac{t^{p}}{p}$ ($p\geq 1$), then $I_{nv}^{\Phi}(\Omega)$ becomes the space $I_{nv}^{p}(\Omega)$ obtained by replacing the Orlicz space $ L^{\Phi}(\Omega)$ in the definition with the Lebesgue space  $L^{p}(\Omega)$. 
	The dynamical system $T$ is said to be ergodic if every invariant function is $\mu$-equivalent to a constant, i.e. for all $f \in I_{nv}^{\Phi}(\Omega)$, we have $f(\omega) = \textup{c}$, $c\in \mathbb{R}$, for $\mu$-a.e. $\omega\in \Omega$.  
	Let $f$ be a measurable function in $\Omega$, for a fixed $\omega\in \Omega$ the function $x\rightarrow f(T(x)\omega)$, $x\in \mathbb{R}^{N}$ is called a realization of $f$ and the mapping $(x,\omega) \rightarrow f(T(x)\omega)$ is called a stationary process. The process is said to be stationary ergodic if the dynamical system $T$ is ergodic. \\
	Thus,	for $f \in \textbf{D}_{1}^{\infty}(\Omega)$, and for $\mu$-a.e. $\omega \in \Omega$, the function $x \rightarrow f(T(x)\omega)$ is in $\mathcal{C}^{\infty}(\mathbb{R}^{N})$ and further 
	\begin{equation}\label{lem18}
		D^{\alpha}_{x} f(T(x)\omega) = (D^{\alpha}_{1}f)(T(x)\omega), \quad \textup{for\; any} \;\, \alpha \in \mathbb{N}^{N},
	\end{equation}
	where $D^{\alpha}_{x}$ is the usual higher order derivative with respect to $x$ and $D^{\alpha}_{1}$ is the stochastic higher order derivative in $L^{1}(\Omega)$.
	If the dynamical system $T$ is ergodic, then :
	\begin{itemize}
		\item[i)] For $f\in L^{1}(\Omega)$, we have $f \in I_{nv}^{1}(\Omega)$ if and only if $D_{i,1}f = 0$ for each $1 \leq i \leq N$.
		\item[ii)] For $f\in L^{\Phi}(\Omega)$, we have $f \in I_{nv}^{\Phi}(\Omega)$ if and only if $D_{i,\Phi}f = 0$ for each $1 \leq i \leq N$.
	\end{itemize}
	So if we endow $\mathcal{C}^{\infty}(\Omega)$ with the seminorm 
	\begin{equation*}
		\|u\|_{\# ,\Phi} = \sum_{i=1}^{N} \|D_{i,\Phi}u\|_{L^{\Phi}(\Omega)}, \quad u \in \mathcal{C}^{\infty}(\Omega),
	\end{equation*}
	we obtain a locally convex space which is generally not complete. We denote $W^{1}_{\#}L^{\Phi}(\Omega)$ the completion of $\mathcal{C}^{\infty}(\Omega)$ with respect to the seminorm $\|\cdot\|_{\# ,\Phi}$, and we denote by 
	\begin{equation*}
		I_{\Phi} : \mathcal{C}^{\infty}(\Omega) \hookrightarrow W^{1}_{\#}L^{\Phi}(\Omega),
	\end{equation*}
	the canonical mapping.\\
	The gradient operator $D_{\omega,\Phi} = (D_{1,\Phi}, \cdots, D_{N,\Phi}) : \mathcal{C}^{\infty}(\Omega) \rightarrow L^{\Phi}(\Omega)^{N}$ extends by continuity to a unique mapping $\overline{D}_{\omega,\Phi} = (\overline{D}_{1,\Phi}, \cdots, \overline{D}_{N,\Phi}) : W^{1}_{\#}L^{\Phi}(\Omega) \rightarrow L^{\Phi}(\Omega)^{N}$ with the properties :
	\begin{itemize}
		\item[i)] $D_{i,\Phi} = \overline{D}_{i,\Phi}\circ I_{\Phi}$, for $1\leq i\leq N$,
		\item[ii)] $\displaystyle \|u\|_{W^{1}_{\#}L^{\Phi}(\Omega)}= \|u\|_{\# ,\Phi} = \sum_{i=1}^{N} \|\overline{D}_{i,\Phi}u\|_{L^{\Phi}(\Omega)}, \quad u \in W^{1}_{\#}L^{\Phi}(\Omega)$.
	\end{itemize}
	Moreover, the mapping $\overline{D}_{\omega,\Phi}$ is an isometric embedding of $W^{1}_{\#}L^{\Phi}(\Omega)$ into a closed subspace of $L^{\Phi}(\Omega)^{N}$ and as $L^{\Phi}(\Omega)^{N}$ is a reflexive Banach space, then we deduce that the Banach space $W^{1}_{\#}L^{\Phi}(\Omega)$ is reflexive.  \\
	By duality we define the operator $\textup{div}_{\omega,\widetilde{\Phi}} : L^{\widetilde{\Phi}}(\Omega)^{N} \rightarrow (W^{1}_{\#}L^{\Phi}(\Omega))'$ by :
	\begin{equation*}
		\langle \textup{div}_{\omega,\widetilde{\Phi}} u \, , \, v \rangle = - \langle u\, , \, \overline{D}_{\omega,\Phi}v \rangle, \quad \forall u \in L^{\widetilde{\Phi}}(\Omega)^{N} \; \textup{and} \; \forall v \in W^{1}_{\#}L^{\Phi}(\Omega).
	\end{equation*}  
	The operator $\textup{div}_{\omega,\widetilde{\Phi}}$ just defined extends the natural divergence operator defined in $\mathcal{C}^{\infty}(\Omega)$ since for all $f\in \mathcal{C}^{\infty}(\Omega)$ we have $D_{i,\Phi}f = \overline{D}_{i,\Phi}\circ I_{\Phi}(f)$. 
		We will denote $D_{\omega,\Phi}=D_{\omega}$, $\overline{D}_{\omega,\Phi}=\overline{D}_{\omega}$ and $\textup{div}_{\omega,\widetilde{\Phi}}= \textup{div}_{\omega,p'}$ ($p'= \frac{p-1}{p}$) if there is no ambiguity, and we refer to \cite{sango} for the definitions of these operators in $L^{p}$-spaces that the authors note $D_{\omega,p}$, $\overline{D}_{\omega,p}$ and $\textup{div}_{\omega,p'}$ respectively.  \\
	To end this section, we define the Orlicz-Sobolev space $W^{1}L^{\Phi}_{D_{x}}(Q; L^{\Phi}(\Omega))$ as follows
	\begin{equation}\label{c1orli1S}
		W^{1}L^{\Phi}_{D_{x}}(Q; L^{\Phi}(\Omega)) = \left\{  v \in L^{\Phi}(Q\times\Omega) \, : \, \frac{\partial v}{\partial x_{i}} \in L^{\Phi}(Q\times\Omega), \; 1 \leq i \leq N \right\}
	\end{equation}
	where derivatives are taken in the distributional sense on $Q$. Endowed with the norm 
	\begin{equation*}
		\lVert v \rVert_{W^{1}L^{\Phi}_{D_{x}}(Q; L^{\Phi}(\Omega))} = \lVert v \rVert_{L^{\Phi}(Q\times\Omega)} + \sum_{i=1}^{n} \lVert D_{x_{i}} v \rVert_{L^{\Phi}(Q\times\Omega)}\, ; \; v \in W^{1}L^{\Phi}_{D_{x}}(Q; L^{\Phi}(\Omega)),
	\end{equation*}
	$W^{1}L^{\Phi}_{D_{x}}(Q; L^{\Phi}(\Omega))$ is a reflexive Banach space. On the other hand, we denote by $W^{1}_{0}L^{\Phi}_{D_{x}}(Q; L^{\Phi}(\Omega))$ the set of functions in $W^{1}L^{\Phi}_{D_{x}}(Q; L^{\Phi}(\Omega))$ with zero boundary condition on $Q$. Endowed with the norm 
	\begin{equation*}
		\lVert v \rVert_{W^{1}_{0}L^{\Phi}_{D_{x}}(Q; L^{\Phi}(\Omega))} =  \sum_{i=1}^{n} \lVert D_{x_{i}} v \rVert_{L^{\Phi}(Q\times\Omega)} \;\, ; \quad v \in W^{1}_{0}L^{\Phi}_{D_{x}}(Q; L^{\Phi}(\Omega)),
	\end{equation*}
	$W^{1}_{0}L^{\Phi}_{D_{x}}(Q; L^{\Phi}(\Omega))$ is a reflexive Banach space.

	
	\section{Stochastic two-scale convergence in the mean in Orlicz-Sobolev's spaces}\label{sect3}
	
	The stochastic homogenization of functional $F_{\epsilon}$ define in (\ref{ras1}), amounts to find a homogenized functional $F$, by using the stochastic two-scale convergence in the mean method (see \cite{bourgeat}), such that the sequence of minimizers $u_{\epsilon}$ converges to a limit $u$, which is precisely the minimizer of $F$. For this purpose, we need to extend the notion of stochastic two-scale convergence in the mean in $L^{\Phi}(Q\times\Omega)$.  
	
	\subsection{Fundamentals of stochastic homogenization}
	
	Throughout the paper the letter $E$ will denote any ordinary sequence $E=(\epsilon_{n})$ (integers $n\geq 0$) with $0 \leq \epsilon_{n} \leq 1$ and $\epsilon_{n} \rightarrow 0$ as $n\rightarrow \infty$. Such a sequence will be termed a \textit{fundamental sequence}.
	We now let $\{ T(x), \, x \in \mathbb{R}^{N} \}$ be a fixed $N$-dimensional dynamical system on $\Omega$, with respect to a fixed probability measure $\mu$ on $\Omega$ invariant for $T$. We also assume that $L^{\Phi}(\Omega)$ is separable and reflexive.
	Let $Q$ be a bounded domain in $\mathbb{R}^{N}$. We start by defining the important notion of admissible function. 
	\begin{definition}
		We say that an element $f \in L^{\Phi}(Q\times\Omega)$ is admissible if the function 
		\begin{equation*}
			f_{T} : (x, \omega) \rightarrow f(x, T(x)\omega), \quad (x,\omega) \in Q\times \Omega,
		\end{equation*}
		defines an element of $L^{\Phi}(Q\times\Omega)$.
	\end{definition}
	Not every element of $L^{\Phi}(Q\times\Omega)$ is admissible ($f_{T}$ can fail to be measurable for certain $f \in L^{\Phi}(Q\times\Omega)$). In section \ref{sect4}, the  integral functional to which we apply the results of this section will have an admissible integrand. It is thus important to exhibit a class of admissible functions which is large enough to encompass significant applications, but whose description is simple enough to make verification tractable in practical situations.  As in Bourgeat \textit{et al.} \cite{bourgeat}, we will discuss three such classes of functions in what follows. 
	\begin{proposition}
		Assume that $\Phi \in \Delta'$. Then
		every linear combination of functions of the form 
		\begin{equation*}
			(x, \omega) \rightarrow u(x)v(\omega), \quad (x, \omega) \in Q\times\Omega,\;\; u \in L^{\Phi}(Q),\, v \in L^{\Phi}(\Omega),
		\end{equation*}
		is admissible.	
	\end{proposition}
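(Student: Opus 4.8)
The plan is to exploit the multiplicative structure of the $\Delta'$ condition together with the measure-invariance of $T$. Since admissibility is plainly preserved under finite linear combinations---if $f,g\in L^{\Phi}(Q\times\Omega)$ with $f_{T},g_{T}\in L^{\Phi}(Q\times\Omega)$, then $af+bg\in L^{\Phi}(Q\times\Omega)$ and $(af+bg)_{T}=af_{T}+bg_{T}\in L^{\Phi}(Q\times\Omega)$, because $L^{\Phi}(Q\times\Omega)$ is a vector space and $f\mapsto f_{T}$ is linear---it suffices to treat a single product $f(x,\omega)=u(x)v(\omega)$ with $u\in L^{\Phi}(Q)$ and $v\in L^{\Phi}(\Omega)$. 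For such an $f$ I would verify two things: first that $f\in L^{\Phi}(Q\times\Omega)$, and then that its realization $f_{T}(x,\omega)=u(x)v(T(x)\omega)$ again lies in $L^{\Phi}(Q\times\Omega)$.

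The membership $f\in L^{\Phi}(Q\times\Omega)$ is the easy half. Picking $\delta_{1},\delta_{2}>0$ with $\int_{Q}\Phi(|u|/\delta_{1})\,dx<\infty$ and $\int_{\Omega}\Phi(|v|/\delta_{2})\,d\mu<\infty$ (which exist by the very definition of the Luxemburg norm), the $\Delta'$ inequality $\Phi(ts)\leq\beta\,\Phi(t)\Phi(s)$ applied with $t=|u(x)|/\delta_{1}$ and $s=|v(\omega)|/\delta_{2}$ gives the pointwise bound $\Phi\!\left(|u(x)v(\omega)|/(\delta_{1}\delta_{2})\right)\leq\beta\,\Phi(|u(x)|/\delta_{1})\,\Phi(|v(\omega)|/\delta_{2})$. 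Integrating over $Q\times\Omega$ and using Tonelli's theorem (the integrand is nonnegative and measurable, $u\otimes v$ being a product of measurable factors) factorises the double modular into the product of the two finite modulars, so $u\otimes v$ belongs to the Orlicz class and hence to $L^{\Phi}(Q\times\Omega)$.

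For $f_{T}$ the same $\Delta'$ estimate, now with $s=|v(T(x)\omega)|/\delta_{2}$, yields $\iint_{Q\times\Omega}\Phi\!\left(|u(x)v(T(x)\omega)|/(\delta_{1}\delta_{2})\right)dx\,d\mu\leq\beta\int_{Q}\Phi(|u(x)|/\delta_{1})\Big(\int_{\Omega}\Phi(|v(T(x)\omega)|/\delta_{2})\,d\mu\Big)dx$; the inner integral is, for each fixed $x$, equal to $\int_{\Omega}\Phi(|v(\omega)|/\delta_{2})\,d\mu$ by the $\mu$-invariance of $T(x)$ (the pushforward argument already used in the proof of the isometry proposition), so it is a constant in $x$ and the right-hand side is again the product of two finite modulars. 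What makes this half nontrivial---and what I expect to be the genuine obstacle, since the paper explicitly warns that $f_{T}$ may fail to be measurable for general $f$---is establishing that $(x,\omega)\mapsto v(T(x)\omega)$ is jointly $\mathscr{L}\otimes\mathscr{M}$-measurable, so that Tonelli even applies. I would obtain this from the measurability axiom (iii) of the dynamical system: for an indicator $v=\mathbf{1}_{F}$ with $F\in\mathscr{M}$ one has $v(T(x)\omega)=\mathbf{1}_{\{(x,\omega):\,T(x)\omega\in F\}}$, which is measurable by (iii); passing to simple functions by linearity and then to a general measurable $v$ through a pointwise limit of simple functions propagates joint measurability to $v\circ T$, and multiplication by the measurable factor $u(x)$ keeps $f_{T}$ measurable. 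With measurability secured, the modular bound above finishes the proof.
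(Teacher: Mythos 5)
Your proposal is correct and follows essentially the same route as the paper's proof: the $\Delta'$ inequality $\Phi(ts)\leq\beta\,\Phi(t)\Phi(s)$ combined with Tonelli's theorem and the $\mu$-invariance of the dynamical system $T$ to bound the modulars of both $u\otimes v$ and its realization $f_{T}$. The only place you go beyond the paper is the joint measurability of $(x,\omega)\mapsto v(T(x)\omega)$, which the paper simply asserts and you correctly derive from the measurability axiom (iii) via indicators, simple functions, and pointwise limits --- a worthwhile detail, but not a different method.
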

	\begin{proof}
		Let $u, v$ such that there exist $\lambda > 0$, $\nu > 0$ and $\int_{Q} \Phi\left(\dfrac{|u(x)|}{\lambda}\right)dx < \infty$, $\int_{\Omega} \Phi\left(\dfrac{|v(\omega)|}{\nu}\right)d\mu < \infty$. Then the function $(x, \omega)\rightarrow f(x, \omega) = u(x)v(\omega)$ is measurable on $Q\times\Omega$ and since  $\Phi \in \Delta'$, there exists $\beta > 0$ such that 
		\begin{equation*}
			\begin{array}{rcl}
				\displaystyle	\iint_{Q\times \Omega} \Phi\left(\dfrac{|f(x,\omega)|}{\lambda \nu}\right)  dxd\mu &=& \displaystyle\iint_{Q\times \Omega} \Phi\left(\dfrac{|u(x)|\cdot|v(\omega)|}{\lambda \nu}\right)  dxd\mu \\
				& \leq&  \beta \, \displaystyle \int_{Q} \Phi\left(\dfrac{|u(x)|}{\lambda}\right)dx \cdot \int_{\Omega} \Phi\left(\dfrac{|v(\omega)|}{\nu}\right)d\mu < \infty,
			\end{array}
		\end{equation*} 
		thus $f \in L^{\Phi}(Q\times\Omega)$.  \\
		Now, since the function $(x, \omega)\rightarrow v(T(x)\omega)$ is measurable on $Q\times\Omega$, then it is also true for the function $(x, \omega)\rightarrow f(x, T(x)\omega) = u(x)v(T(x)\omega)$ and by the fact that the measure $\mu$ is invariant under the dynamical system $T$, we have 
		\begin{equation*}
			\begin{array}{rcl}
				\displaystyle	\iint_{Q\times \Omega} \Phi\left(\dfrac{|f_{T}(x,\omega)|}{\lambda \nu}\right)  dxd\mu  &\leq & \beta \;\displaystyle \int_{Q} \Phi\left(\dfrac{|u(x)|}{\lambda}\right)dx \cdot \int_{\Omega} \Phi\left(\dfrac{|v(T(x)\omega)|}{\nu}\right)d\mu \\
				&= & \beta \;\displaystyle \int_{Q} \Phi\left(\dfrac{|u(x)|}{\lambda}\right)dx \cdot \int_{\Omega} \Phi\left(\dfrac{|v(\omega)|}{\nu}\right)d\mu_{T} \\
				& =& \beta \;\displaystyle \int_{Q} \Phi\left(\dfrac{|u(x)|}{\lambda}\right)dx \cdot \int_{\Omega} \Phi\left(\dfrac{|v(\omega)|}{\nu}\right)d\mu \\
				& < & \infty
			\end{array}
		\end{equation*}
		thus $f_{T} \in L^{\Phi}(Q\times\Omega)$.
	\end{proof}
	\begin{proposition}
		Let $B(\Omega)$ denote the Banach space of all functions defined everywhere on $\Omega$ that are bounded and measurable over $\Omega$. Let $L^{\Phi}(Q, B(\Omega))$ denote the set of all elements $f \in L^{\Phi}(Q\times\Omega)$  such that for a.e. $x \in Q$, $f(x,\cdot) \in B(\Omega)$, and there exists a $\lambda > 0$ such that
		\begin{equation}\label{c3boch1}
			\int_{Q} \Phi\left(\dfrac{\|f(x, \cdot)\|_{B(\Omega)}}{\lambda}\right)  dx < \infty.
		\end{equation}  
		Then every element $f$ of $L^{\Phi}(Q, B(\Omega))$ is admissible.
	\end{proposition}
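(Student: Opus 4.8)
The plan is to check the two conditions packaged into the word \emph{admissible}: that $f_{T}(x,\omega)=f(x,T(x)\omega)$ is measurable on $Q\times\Omega$, and that it lies in $L^{\Phi}(Q\times\Omega)$. The measurability is the delicate part (it is exactly what can fail for a general element of $L^{\Phi}(Q\times\Omega)$), and it is where the bounded–measurable structure of $B(\Omega)$ together with the measurability axiom (iii) of the dynamical system must be used; the membership in $L^{\Phi}$ will then follow from an elementary pointwise bound and (\ref{c3boch1}).

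For the measurability I would first record the one-variable fact, already used in the previous proposition, that for every fixed $g\in B(\Omega)$ the map $(x,\omega)\mapsto g(T(x)\omega)$ is measurable on $Q\times\Omega$. Indeed, for $g=\mathbf{1}_{F}$ with $F\in\mathscr{M}$ this is precisely axiom (iii), since $g(T(x)\omega)=\mathbf{1}_{\{(x,\omega):\,T(x)\omega\in F\}}$; by linearity it extends to all simple functions on $\Omega$, and since every $g\in B(\Omega)$ is a uniform limit of simple functions, the corresponding compositions converge uniformly in $(x,\omega)$, whence $(x,\omega)\mapsto g(T(x)\omega)$ is measurable.

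Next I would lift this to $f$ by approximation in the $x$-variable. Viewing $f$ as a $B(\Omega)$-valued map on $Q$, I choose simple functions $f_{n}(x)=\sum_{k}\mathbf{1}_{A_{k}^{n}}(x)\,b_{k}^{n}$, with $A_{k}^{n}\subset Q$ measurable and $b_{k}^{n}\in B(\Omega)$, such that $\|f_{n}(x)-f(x,\cdot)\|_{B(\Omega)}\to 0$ for a.e.\ $x\in Q$. Each composite $(f_{n})_{T}(x,\omega)=\sum_{k}\mathbf{1}_{A_{k}^{n}}(x)\,b_{k}^{n}(T(x)\omega)$ is measurable by the previous step, and since convergence in the $B(\Omega)$-norm is uniform convergence in $\omega$, we get $(f_{n})_{T}(x,\omega)\to f(x,T(x)\omega)$ for a.e.\ $x$ and every $\omega$. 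Thus $f_{T}$ is an a.e.\ pointwise limit of measurable functions, hence measurable.

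It then remains to show $f_{T}\in L^{\Phi}(Q\times\Omega)$, and here I would use the pointwise bound $|f(x,T(x)\omega)|\le\|f(x,\cdot)\|_{B(\Omega)}$, valid for every $\omega$ because $\|\cdot\|_{B(\Omega)}$ is the supremum norm. Monotonicity of the Young function $\Phi$ and the fact that $\mu$ is a probability measure then give, with the same $\lambda>0$ as in (\ref{c3boch1}),
\begin{equation*}
\iint_{Q\times\Omega}\Phi\!\left(\frac{|f_{T}(x,\omega)|}{\lambda}\right)dx\,d\mu\le\int_{Q}\Phi\!\left(\frac{\|f(x,\cdot)\|_{B(\Omega)}}{\lambda}\right)dx<\infty,
\end{equation*}
since the integrand on the right is independent of $\omega$ and $\mu(\Omega)=1$. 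Hence $f_{T}\in L^{\Phi}(Q\times\Omega)$, i.e.\ $f$ is admissible. The main obstacle is the measurability of $f_{T}$, and more precisely the passage from the one-variable statement to $f$ itself: it is the simple-function approximation of $x\mapsto f(x,\cdot)$ in $B(\Omega)$, whose uniform (sup-norm) convergence in $\omega$ is exactly what transfers measurability through the random twist $T(x)$.
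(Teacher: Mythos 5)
Your proof is correct, and its core --- the measurability of $f_{T}$ --- follows essentially the same route as the paper's: approximate $x\mapsto f(x,\cdot)$ by $B(\Omega)$-valued simple functions converging a.e.\ in the $B(\Omega)$-norm, handle each simple composite via the measurability axiom (iii) of the dynamical system, and pass to the pointwise a.e.\ limit. The paper outsources both sub-steps (the existence of the simple approximations to a Bochner-integrability theorem of Diestel--Uhl type, and the rest of the measurability argument to Bourgeat \emph{et al.}), while you spell them out; note, though, that your ``I choose simple functions $f_{n}$'' is exactly the point where the paper invokes that cited theorem, so strictly speaking you are leaning on the same implicit strong-measurability of $x\mapsto f(x,\cdot)$ as a $B(\Omega)$-valued map that the paper assumes --- a citation or a word of justification is needed there, but it is a gap shared with (not worse than) the original. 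Where you genuinely diverge is the final $L^{\Phi}$-membership: the paper never uses hypothesis (\ref{c3boch1}) at that stage, instead invoking the $\mu$-invariance of $T$ to get
\begin{equation*}
\iint_{Q\times\Omega}\Phi\left(\frac{|f(x,T(x)\omega)|}{\lambda}\right)dx\,d\mu=\iint_{Q\times\Omega}\Phi\left(\frac{|f(x,\omega)|}{\lambda}\right)dx\,d\mu<\infty
\end{equation*}
directly from $f\in L^{\Phi}(Q\times\Omega)$, whereas you dominate $|f(x,T(x)\omega)|$ pointwise by $\|f(x,\cdot)\|_{B(\Omega)}$ (legitimate, since the $B(\Omega)$-norm is a genuine supremum over all of $\Omega$, not an essential one) and then use (\ref{c3boch1}) together with $\mu(\Omega)=1$. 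Both arguments are valid: yours is the one that actually exploits the integral hypothesis appearing in the statement and requires no invariance of the measure, while the paper's is shorter given that invariance is available and shows slightly more, namely that the twist $f\mapsto f_{T}$ preserves the modular.
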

	\begin{proof}
		As in the case of standard Sobolev spaces (see \cite[Proposition 3.1]{bourgeat}), the proof is done in two steps. The first step is based on the decomposition into simple elements of an Orlicz-Bochner type function (i.e., of form (\ref{c3boch1})) to obtain the measurability of $f$, and the second step is based on the $\mu$-invariance of the dynamical system $T$ to conclude on the admissibility of $f$. So let's start with the first step.  \\
			\textbf{Step 1.} Since  $f \in L^{\Phi}(Q, B(\Omega))$  is Bochner $\Phi$-integrable relative to $B(\Omega)$, then by \cite[Theorem II.2.2]{jr},  there exists a sequence $f_{n} = \sum_{i} f_{in} \chi_{A_{in}}$ of $B(\Omega)$-valued simple functions on $Q$ such that 
			\begin{equation*}
				\lim_{n} \int_{Q} \Phi\left(\|f(x,\cdot) - f_{n}(x,\cdot)\|_{B(\Omega)} \right) dx = 0,
			\end{equation*}  
			and by standard argument, we deduce from this the existence of a subsequence, still denoted $(f_{n})$ such that 
			\begin{equation*}\label{vrai1}
				\lim_{n} \|f(x,\cdot) - f_{n}(x,\cdot)\|_{B(\Omega)} = 0 \quad \textup{for\, a.e.} \; x \in Q.
			\end{equation*}
			The remainder of this first step to conclude that $f$ is measurable on $Q\times \Omega$ is entirely identical to that in \cite[Page 31]{bourgeat} and is therefore omitted here. \\
			\textbf{Step 2.} Let  $\lambda > 0$. Since $f \in L^{\Phi}(Q, B(\Omega))$, then by the $\mu$-invariance of dynamical system $T$, we have 
			\begin{equation*}
				\int_{Q\times\Omega} \Phi\left(\dfrac{|f(x,T(x)\omega)|}{\lambda}\right) dx d\mu = \int_{Q\times\Omega} \Phi\left(\dfrac{|f(x,\omega)|}{\lambda}\right) dx d\mu,
			\end{equation*} 
			and therefore, we get that $f_{T} \in L^{\Phi}(Q\times\Omega)$. 
	\end{proof}
		The last class of admissible functions is given by the following proposition whose  proof is entirely identical to the proof of \cite[Proposition 3.2]{bourgeat}, since no argument changes when replacing the classical Lebesgue space $L^{2}$ by Orlicz space $L^{\Phi}$.  
	\begin{proposition}
		Every element $f \in \mathcal{C}(\overline{Q}, L^{\infty}(\Omega))$ is admissible.
	\end{proposition}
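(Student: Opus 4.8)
The plan is to verify the two defining requirements of admissibility in turn: first that $f$ itself lies in $L^{\Phi}(Q\times\Omega)$, and then that the transported function $f_{T}$ is measurable and belongs to $L^{\Phi}(Q\times\Omega)$ as well. The starting observation is that, since $\overline{Q}$ is compact and $x\mapsto f(x,\cdot)$ is continuous into $L^{\infty}(\Omega)$, the scalar map $x\mapsto \|f(x,\cdot)\|_{L^{\infty}(\Omega)}$ is continuous on $\overline{Q}$, hence bounded by some constant $M$. Thus $|f(x,\omega)|\leq M$ for $\mu$-a.e. $\omega\in\Omega$ and all $x\in\overline{Q}$. Because $Q$ is bounded and $\mu$ is a probability measure, $Q\times\Omega$ has finite measure, so choosing $\lambda$ large enough that $\Phi(M/\lambda)$ is finite gives $\iint_{Q\times\Omega}\Phi(|f(x,\omega)|/\lambda)\,dx\,d\mu\leq \Phi(M/\lambda)\,|Q|<\infty$; this secures $f\in L^{\Phi}(Q\times\Omega)$ once joint measurability is in hand.

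The core of the argument is the joint measurability of $f$ and $f_{T}$, and this is the step I expect to be the main obstacle. I would exploit the uniform continuity of $f$ on the compact set $\overline{Q}$: for every $\eta>0$ there is a finite measurable partition $\overline{Q}=\bigcup_{j=1}^{m}Q_{j}$ together with points $x_{j}\in Q_{j}$ such that $\|f(x,\cdot)-f(x_{j},\cdot)\|_{L^{\infty}(\Omega)}<\eta$ for all $x\in Q_{j}$. The simple function $f_{\eta}(x,\omega)=\sum_{j=1}^{m}\chi_{Q_{j}}(x)f(x_{j},\omega)$ is then jointly measurable on $Q\times\Omega$ and satisfies $|f-f_{\eta}|\leq\eta$ for a.e. $(x,\omega)$, so $f$ is a uniform limit of measurable functions and is itself measurable. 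For the transported function I would form $(f_{\eta})_{T}(x,\omega)=\sum_{j=1}^{m}\chi_{Q_{j}}(x)\,f(x_{j},T(x)\omega)$; each summand is measurable because $\chi_{Q_{j}}$ is measurable in $x$, $f(x_{j},\cdot)$ is measurable on $\Omega$, and the map $(x,\omega)\mapsto T(x)\omega$ is $\mathscr{L}\otimes\mathscr{M}$-measurable by the measurability axiom (iii) of the dynamical system, so the composition is measurable. To transfer the uniform bound, I would note that for fixed $x\in Q_{j}$ the estimate $|f(x,\omega')-f(x_{j},\omega')|<\eta$ holds for $\mu$-a.e.\ $\omega'$; evaluating at $\omega'=T(x)\omega$ and using that $T(x)$ is $\mu$-measure preserving and invertible shows $|f_{T}-(f_{\eta})_{T}|\leq\eta$ for a.e.\ $(x,\omega)$, whence $f_{T}$ is measurable as a uniform limit of the measurable functions $(f_{\eta})_{T}$.

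With measurability established, I would conclude the integrability of $f_{T}$ through Fubini's theorem and the $\mu$-invariance of $T$. Writing
\begin{equation*}
\iint_{Q\times\Omega}\Phi\!\left(\frac{|f(x,T(x)\omega)|}{\lambda}\right)dx\,d\mu=\int_{Q}\left(\int_{\Omega}\Phi\!\left(\frac{|f(x,T(x)\omega)|}{\lambda}\right)d\mu\right)dx,
\end{equation*}
the inner integral equals $\int_{\Omega}\Phi(|f(x,\omega)|/\lambda)\,d\mu$ for each fixed $x$, because $T(x)$ preserves $\mu$; integrating over $Q$ returns the finite quantity bounded above in the first paragraph, so $f_{T}\in L^{\Phi}(Q\times\Omega)$ and $f$ is admissible. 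I would emphasize that, in contrast with the two preceding propositions, no $\Delta'$-type hypothesis on $\Phi$ is needed here, exactly as in the $L^{2}$ setting of Bourgeat \emph{et al.}, since the $L^{\infty}$-control of $f$ replaces any product structure in the estimates.
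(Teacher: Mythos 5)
Your proof is correct and follows essentially the same route as the paper's: the paper simply declares the proof to be identical to that of \cite[Proposition 3.2]{bourgeat}, and that argument is precisely the one you give — uniform continuity of $f$ on the compact set $\overline{Q}$ yielding piecewise-constant-in-$x$ approximants, joint measurability of the transported approximants via the measurability axiom (iii) of the dynamical system, and the $L^{\Phi}$-bound via the $\mu$-invariance of $T$ together with Fubini's theorem. Your closing observation that no $\Delta'$-hypothesis is needed here is also consistent with the paper, which invokes $\Delta'$ only for the tensor-product class of admissible functions.
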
	
	\noindent We assume for the rest of the work that $\Phi$ and $\widetilde{\Phi}$ satisfy the $\Delta'$-condition. We can now define the stochastic two-scale convergence in the mean in the Orlicz space.
	\begin{definition}(stochastic 2-scale convergence in the mean in $L^{\Phi}$-spaces)\label{lem36} \\
		Let $\Phi \in \Delta_{2}$ be a Young function and $\widetilde{\Phi}$ its conjugate. A sequence $(u_{\epsilon})$ in $ L^{\Phi}(Q\times\Omega)$ is said to weakly stochastically 2-scale converge in the mean to $u_{0} \in  L^{\Phi}(Q\times\Omega)$, if as $\epsilon \to 0$ we have,  
		\begin{equation}\label{lem35}
			\lim_{\epsilon \to 0} \iint_{Q\times \Omega} u_{\epsilon}(x,\omega) f(x, T(\epsilon^{-1}x)\omega) dx\, d\mu = \iint_{Q\times \Omega} u_{0}(x,\omega) f(x,\omega) dx\, d\mu
		\end{equation}
		for every $f \in \mathcal{C}^{\infty}_{0}(Q)\otimes\mathcal{C}^{\infty}(\Omega)$. \\
		We express this by writing  $u_{\epsilon} \;\rightarrow\; u_{0}$ stoch. mean in $ L^{\Phi}(Q\times\Omega)$-weak 2s.
	\end{definition}
	\noindent We recall that $\mathcal{C}^{\infty}_{0}(Q)\otimes\mathcal{C}^{\infty}(\Omega)$ is the space of functions of the form, 
	\begin{equation*}
		f(x, \omega) = \sum_{finite} \varphi_{i}(x)\psi_{i}(\omega), \quad (x,\omega) \in Q\times \Omega,
	\end{equation*}
	with $\varphi_{i} \in \mathcal{C}^{\infty}_{0}(Q)$, $\psi_{i} \in \mathcal{C}^{\infty}(\Omega)$. Such functions are dense in $\mathcal{C}^{\infty}_{0}(Q) \otimes L^{\widetilde{\Phi}}(\Omega)$, since $\mathcal{C}^{\infty}(\Omega)$ is dense in $L^{\widetilde{\Phi}}(\Omega)$ and hence in $\mathcal{K}(Q; L^{\widetilde{\Phi}}(\Omega))$,  $\mathcal{K}(Q; L^{\widetilde{\Phi}}(\Omega))$ being the space of continuous functions of $Q$ into $L^{\widetilde{\Phi}}(\Omega)$ with compact support contained in $Q$. As $\mathcal{K}(Q; L^{\widetilde{\Phi}}(\Omega))$ is dense in $L^{\widetilde{\Phi}}(Q\times\Omega)$, the uniqueness of the stochastic two-scale limit is ensured.
	
	\begin{remark}(Inspired by \cite{sango})\label{lem16} 
		\begin{enumerate}
			\item[$R_{1})$]\textit{ Assume that a sequence $(u_{\epsilon})$ in $ L^{\Phi}(Q\times\Omega)$ is weakly stochastically 2-scale convergent in the mean to $u_{0} \in  L^{\Phi}(Q\times\Omega)$. Then since $\mathcal{C}^{\infty}_{0}(Q)\otimes\mathcal{C}^{\infty}(\Omega)$ is dense in $\mathcal{C}(Q; L^{\infty}(\Omega))$, then (\ref{lem35}) holds for every $f \in \mathcal{C}(Q; L^{\infty}(\Omega))$. } 
			\item[$R_{2})$] \textit{ In the Definition \ref{lem36},  if we take $\Omega = Y \subset \mathbb{R}^{N}$, $d\mu = dy$, and that we consider the $N$-dimensional dynamical system of translation $\{ T(x) : Y\rightarrow Y \; ; \; x \in \mathbb{R}^{N} \}$ defined by $T(x)y = (x+y)\textup{mod1}$, we obtain the framework to which periodic two-scale convergence is defined as in \cite{tacha1}. } 
		\end{enumerate}
	\end{remark}

	\subsection{Compactness theorem: case of Orlicz's spaces}
	The proof of the compactness theorem required the following lemma.
	\begin{lemma}\label{lem2}
		Let	$f \in \mathcal{C}^{\infty}_{0}(Q)\otimes\mathcal{C}^{\infty}(\Omega)$.
		Then, for any $\epsilon >0$, the function $f_{T}^{\epsilon} : (x, \omega) \rightarrow f(x, T(\epsilon^{-1}x)\omega)$ which is in $L^{\Phi}(Q\times\Omega)$, is such that 
		\begin{equation*}\label{lem1}
			\|f_{T}^{\epsilon}\|_{L^{\Phi}(Q\times \Omega)}  =  \|f\|_{L^{\Phi}(Q\times \Omega)}.
		\end{equation*}
	\end{lemma}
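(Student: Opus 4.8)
The plan is to work directly with the Luxemburg norm and reduce the claim to the equality of the two associated convex modulars. Recall that for $u \in L^{\Phi}(Q\times\Omega)$ one has
\[
\|u\|_{L^{\Phi}(Q\times\Omega)} = \inf\left\{ \delta>0 \; : \; \iint_{Q\times\Omega} \Phi\left(\frac{|u(x,\omega)|}{\delta}\right) dx\, d\mu \leq 1 \right\}.
\]
Hence it suffices to prove that for every fixed $\delta>0$ the two modulars coincide, namely
\[
\iint_{Q\times\Omega} \Phi\left(\frac{|f_{T}^{\epsilon}(x,\omega)|}{\delta}\right) dx\, d\mu = \iint_{Q\times\Omega} \Phi\left(\frac{|f(x,\omega)|}{\delta}\right) dx\, d\mu .
\]
Once this identity is established for all $\delta>0$, the two sets of admissible $\delta$ entering the infima are identical, and therefore the Luxemburg norms are equal; this also reconfirms that $f_{T}^{\epsilon}\in L^{\Phi}(Q\times\Omega)$.

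To establish the modular identity, first I would observe that the integrand $(x,\omega) \mapsto \Phi(|f_{T}^{\epsilon}(x,\omega)|/\delta)$ is nonnegative and measurable on $Q\times\Omega$, so that Tonelli's theorem applies and we may integrate in $\omega$ first. For each fixed $x \in Q$, set $y = \epsilon^{-1}x \in \mathbb{R}^{N}$; since the probability measure $\mu$ is invariant for $T$, the map $\omega \mapsto T(y)\omega$ is $\mu$-measure preserving, and passing to the range measure $\mu_{T}=\mu$ gives
\[
\int_{\Omega} \Phi\left(\frac{|f(x,T(y)\omega)|}{\delta}\right) d\mu = \int_{\Omega} \Phi\left(\frac{|f(x,\omega)|}{\delta}\right) d\mu_{T} = \int_{\Omega} \Phi\left(\frac{|f(x,\omega)|}{\delta}\right) d\mu ,
\]
exactly the change of variable already used in the proof of the group-of-isometries proposition. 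Integrating this pointwise-in-$x$ identity over $Q$ yields the desired equality of modulars.

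The only genuinely delicate point is the measurability of $f_{T}^{\epsilon}$ on the product $Q\times\Omega$, which is needed to invoke Tonelli and which, as emphasized after the definition of admissibility, is \emph{not} automatic for an arbitrary element of $L^{\Phi}(Q\times\Omega)$. Here, however, it is guaranteed because $f \in \mathcal{C}^{\infty}_{0}(Q)\otimes\mathcal{C}^{\infty}(\Omega) \subset \mathcal{C}(\overline{Q}, L^{\infty}(\Omega))$ is admissible; the rescaling $x\mapsto \epsilon^{-1}x$ in the first variable is a continuous (hence measurable) operation, so the measurability of $(x,\omega)\mapsto f(x,T(\epsilon^{-1}x)\omega)$ follows from measurability axiom (iii) of the dynamical system together with the $\mu$-invariance, by the same argument that establishes admissibility. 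With measurability secured, the remainder is the routine computation above, so no further analytic obstacle arises.
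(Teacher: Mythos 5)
Your proof is correct and follows essentially the same route as the paper's: both establish equality of the modulars via the $\mu$-invariance of the dynamical system $T$ (the change of variable $\omega \mapsto T(\epsilon^{-1}x)\omega$ for fixed $x$) and then conclude equality of the Luxemburg norms. The only difference is one of presentation: the paper reduces to simple tensors $\phi(x)\psi(\omega)$ and leaves Tonelli and measurability implicit, whereas you treat a general element of $\mathcal{C}^{\infty}_{0}(Q)\otimes\mathcal{C}^{\infty}(\Omega)$ directly and spell out these points, which if anything makes the argument tighter.
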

	\begin{proof}
		For that, it is sufficient to do it for $f$ under the form $f(x, \omega) = \phi(x)\psi(\omega)$ with $\phi \in \mathcal{C}^{\infty}_{0}(Q)$ and $\psi \in \mathcal{C}^{\infty}(\Omega)$. \\
		Considering the fact that the measure $\mu$ is invariant under the dynamical system $T$, we have for any $\epsilon >0$, 
		\begin{equation*}
			\begin{array}{rcl}
				\| f_{T}^{\epsilon} \|_{\Phi, Q\times\Omega}  &= & \displaystyle \inf \left\{k >0 \; ; \;\, \displaystyle\iint_{Q\times \Omega} \Phi\left(\dfrac{\phi(x)\psi(T(\epsilon^{-1}x)\omega) }{k} \right)dxd\mu \leq 1 \right\} \\
				&= &\displaystyle \inf \left\{k >0 \; ; \;\, \displaystyle \iint_{Q\times \Omega} \Phi\left(\dfrac{\phi(x)\psi(\omega) }{k} \right)dxd\mu_{T} \leq 1 \right\} \\
				& =& \displaystyle\inf \left\{k >0 \; ; \;\, \displaystyle \iint_{Q\times \Omega} \Phi\left(\dfrac{\phi(x)\psi(\omega) }{k} \right)dxd\mu \leq 1 \right\}  \\
				& = & \| f \|_{\Phi, Q\times\Omega}.
			\end{array}
		\end{equation*}
	\end{proof}
	\begin{theorem}(compactness 1)\label{lem4} \\
		Let $\Phi \in \Delta_{2}$ be a Young function.	Any bounded sequence $(u_{\epsilon})_{\epsilon\in E}$  in $L^{\Phi}(Q\times\Omega)$ admits a subsequence which is weakly stochastically 2-scale convergent in the mean in $ L^{\Phi}(Q\times\Omega)$. 
	\end{theorem}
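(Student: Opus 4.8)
The plan is to prove this compactness result by a duality argument, viewing the two-scale testing procedure as defining a bounded sequence of linear functionals on an appropriate separable Banach space, and then extracting a weak-$*$ convergent subsequence. First I would introduce the linear functionals $L_{\epsilon}$ on the space $\mathcal{K}(Q; L^{\widetilde{\Phi}}(\Omega))$ (or its completion $L^{\widetilde{\Phi}}(Q\times\Omega)$) defined by
\begin{equation*}
	L_{\epsilon}(f) = \iint_{Q\times\Omega} u_{\epsilon}(x,\omega)\, f_{T}^{\epsilon}(x,\omega)\, dx\, d\mu,
	\qquad f_{T}^{\epsilon}(x,\omega) = f(x, T(\epsilon^{-1}x)\omega).
\end{equation*}
The key estimate is that each $L_{\epsilon}$ is bounded uniformly in $\epsilon$: by the generalized H\"older inequality (property iv of Orlicz spaces) one has $|L_{\epsilon}(f)| \leq 2\,\|u_{\epsilon}\|_{L^{\Phi}(Q\times\Omega)}\,\|f_{T}^{\epsilon}\|_{L^{\widetilde{\Phi}}(Q\times\Omega)}$, and by Lemma \ref{lem2} applied with $\widetilde{\Phi}$ in place of $\Phi$ (the norm is invariant under the transformation $f \mapsto f_{T}^{\epsilon}$), this equals $2\,\|u_{\epsilon}\|_{L^{\Phi}(Q\times\Omega)}\,\|f\|_{L^{\widetilde{\Phi}}(Q\times\Omega)}$. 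Since $(u_{\epsilon})$ is bounded in $L^{\Phi}(Q\times\Omega)$ by hypothesis, the operator norms $\|L_{\epsilon}\|$ are uniformly bounded by a constant $C$ independent of $\epsilon$.

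Next I would invoke separability. Since $\Phi, \widetilde{\Phi} \in \Delta_{2}$, the space $L^{\widetilde{\Phi}}(Q\times\Omega)$ is separable and reflexive (property ii), and its dual is identified with $L^{\Phi}(Q\times\Omega)$ (property iii). The functionals $L_{\epsilon}$, being uniformly bounded, form a bounded sequence in the dual $(L^{\widetilde{\Phi}}(Q\times\Omega))' \cong L^{\Phi}(Q\times\Omega)$. By the sequential Banach--Alaoglu theorem on the separable predual, there is a subsequence (not relabeled) and an element $u_{0} \in L^{\Phi}(Q\times\Omega)$ such that $L_{\epsilon}(f) \to \iint_{Q\times\Omega} u_{0}\, f\, dx\, d\mu$ for every $f \in L^{\widetilde{\Phi}}(Q\times\Omega)$, in particular for every $f \in \mathcal{C}^{\infty}_{0}(Q)\otimes\mathcal{C}^{\infty}(\Omega)$. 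This is precisely the defining relation \eqref{lem35} of weak stochastic two-scale convergence in the mean, so the extracted subsequence converges to $u_{0}$, as required.

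I expect the main obstacle to lie in making the duality clean: one must verify that the correspondence $f \mapsto \lim_{\epsilon} L_{\epsilon}(f)$, a priori defined only along the subsequence, genuinely arises from an $L^{\Phi}$-function $u_{0}$ rather than merely from a bounded functional on the test-function space. This is where the reflexivity and separability of $L^{\widetilde{\Phi}}(Q\times\Omega)$ are essential: they guarantee both the identification of the limit functional with an honest element of $L^{\Phi}(Q\times\Omega)$ and the applicability of the diagonal/sequential extraction. A subtlety worth flagging is the measurability of $f_{T}^{\epsilon}$ and the fact that it lands in $L^{\widetilde{\Phi}}(Q\times\Omega)$ with the norm identity; this is exactly what Lemma \ref{lem2} provides for the tensor-product class, and the density of $\mathcal{C}^{\infty}_{0}(Q)\otimes\mathcal{C}^{\infty}(\Omega)$ in $L^{\widetilde{\Phi}}(Q\times\Omega)$ (noted after Definition \ref{lem36}) then upgrades convergence on test functions to convergence against all of $L^{\widetilde{\Phi}}$, closing the argument.
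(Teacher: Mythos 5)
Your proposal is correct and takes essentially the same route as the paper: the same test functionals $\Gamma_\epsilon(f)=\iint_{Q\times\Omega}u_\epsilon f(x,T(\epsilon^{-1}x)\omega)\,dx\,d\mu$, the same uniform bound via the generalized H\"older inequality together with the norm identity of Lemma \ref{lem2} (applied with $\widetilde{\Phi}$), and a duality/weak-$*$ compactness conclusion identifying the limit with an element $u_0 \in L^{\Phi}(Q\times\Omega)$. The only difference is that the paper delegates the extraction step to the abstract result \cite[Proposition 3.2]{gabri} with $X=\mathcal{C}^{\infty}_{0}(Q)\otimes\mathcal{C}^{\infty}(\Omega)$ and $Y=L^{\widetilde{\Phi}}(Q\times\Omega)$, whereas you unpack that same argument by hand (extension from the dense tensor-product subspace, where admissibility is guaranteed, followed by sequential Banach--Alaoglu in the dual of the separable space $L^{\widetilde{\Phi}}(Q\times\Omega)$).
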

	\begin{proof}
		Let $\Phi \in \Delta_{2}$ be a Young function and $\widetilde{\Phi} \in \Delta_{2}$ its conjugate, $(u_{\epsilon})_{\epsilon\in E}$ a bounded sequence  in $L^{\Phi}(Q\times\Omega)$. \\
			In order to use \cite[Proposition 3.2]{gabri}, we get $Y = L^{\widetilde{\Phi}}(Q\times\Omega)$ and $X = \mathcal{C}^{\infty}_{0}(Q)\otimes\mathcal{C}^{\infty}(\Omega)$.  
		For each $f \in \mathcal{C}^{\infty}_{0}(Q)\otimes\mathcal{C}^{\infty}(\Omega)$ and $\epsilon > 0$, let us define the functional $\Gamma_{\epsilon}$ by 
		\begin{equation*}
			\Gamma_{\epsilon}(f) = \iint_{Q\times \Omega} u_{\epsilon}(x,\omega) f(x, T(\epsilon^{-1}x)\omega) dx\, d\mu.
		\end{equation*}
		Then by Holder's inequality and the Lemma \ref{lem2}, we have
		\begin{equation*}\label{fra1}
			\limsup_{\epsilon} |\Gamma_{\epsilon}(f)| \leq 2c \, \|f\|_{L^{\widetilde{\Phi}}(Q\times\Omega)},
		\end{equation*}
		where $\displaystyle c = \sup_{\epsilon} \|u_{\epsilon}\|_{L^{\Phi}(Q\times\Omega)}$. \\
			Hence, we deduce from \cite[Proposition 3.2]{gabri},  the existence of a subsequence $E'$ of $E$ and of a unique $u_{0} \in Y' = L^{\Phi}(Q\times\Omega)$ such that 
		\begin{equation*}
			\lim_{\epsilon \to 0} \iint_{Q\times \Omega} u_{\epsilon}(x,\omega) f(x, T(\epsilon^{-1}x)\omega) dx\, d\mu = \iint_{Q\times \Omega} u_{0}(x,\omega) f(x,\omega) dx\, d\mu
		\end{equation*}
		for all $f \in \mathcal{C}^{\infty}_{0}(Q)\otimes\mathcal{C}^{\infty}(\Omega)$.
	\end{proof}
	
	\subsection{Compactness theorem: case of Orlicz-Sobolev's spaces}
		Let us begin with the following lemma which is an adaptation to Orlicz setting of \cite[Proposition 1]{sango}. 
	\begin{lemma}\label{lem5}
		Let $\Phi \in \Delta_{2}$ be a Young function and $\widetilde{\Phi}$ its conjugate.	Let $v \in L^{\Phi}(\Omega)^{N}$ satisfy 
		\begin{equation*}
			\int_{\Omega} v\cdot g d\mu = 0, \quad \textup{for \; all} \;\, g \in  \mathcal{V}^{\widetilde{\Phi}}_{\textup{div} }= \{ f \in \mathcal{C}^{\infty}(\Omega)^{N} : \textup{div}_{\omega,\widetilde{\Phi}}f=0 \}.
		\end{equation*}
		Then, there exists $u \in W^{1}_{\#}L^{\Phi}(\Omega)$ such that 
		\begin{equation*}
			v = \overline{D}_{\omega,\Phi}u.
		\end{equation*} 
	\end{lemma}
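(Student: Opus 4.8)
The plan is to identify $v$ as an element of the closed subspace of gradients and then to invoke a Hahn--Banach (bipolar) argument, the only genuinely analytic ingredient being a mollification that produces \emph{smooth} divergence-free fields. Write $G^{\Phi} = \overline{D}_{\omega,\Phi}\big(W^{1}_{\#}L^{\Phi}(\Omega)\big)$; by the isometric embedding property recalled above, $G^{\Phi}$ is a closed vector subspace of the reflexive space $L^{\Phi}(\Omega)^{N}$. The desired conclusion $v = \overline{D}_{\omega,\Phi}u$ is precisely the assertion $v \in G^{\Phi}$, so it suffices to show that $v$ is annihilated by every continuous linear functional vanishing on $G^{\Phi}$. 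Since $L^{\Phi}(\Omega)^{N}$ is reflexive with dual $L^{\widetilde{\Phi}}(\Omega)^{N}$, each such functional is represented by some $g \in L^{\widetilde{\Phi}}(\Omega)^{N}$, and vanishing on $G^{\Phi}$ means $\int_{\Omega} g\cdot \overline{D}_{\omega,\Phi}u\, d\mu = 0$ for all $u$, i.e.\ $\textup{div}_{\omega,\widetilde{\Phi}}g = 0$. Thus the lemma reduces to the implication: if $\int_{\Omega} v\cdot g\, d\mu = 0$ for all \emph{smooth} divergence-free $g$, then the same equality holds for \emph{every} $g \in L^{\widetilde{\Phi}}(\Omega)^{N}$ with $\textup{div}_{\omega,\widetilde{\Phi}}g = 0$.

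To bridge that gap I would regularize by the mollifier $J_{\delta}$. First I would check that for a divergence-free $g$ the field $J_{\delta}g \in \mathcal{C}^{\infty}(\Omega)^{N}$ (by the smoothing property of $J_{\delta}$) is again divergence-free. The computation rests on Fubini's theorem, the commutation $U(y)D_{i} = D_{i}U(y)$ of the group with the generators, and the $\mu$-invariance: for $\phi \in \mathcal{C}^{\infty}(\Omega)$ one has $\int_{\Omega}(J_{\delta}g)\cdot D_{\omega}\phi\, d\mu = \int_{\mathbb{R}^{N}} K_{\delta}(y)\big(\int_{\Omega} g(T(y)\omega)\cdot D_{\omega}\phi(\omega)\, d\mu\big)\,dy$, and after the measure-preserving change $\omega \mapsto T(-y)\omega$ the inner integral becomes $\int_{\Omega} g\cdot D_{\omega}\big(U(-y)\phi\big)\,d\mu$, which vanishes because $U(-y)\phi \in \mathcal{C}^{\infty}(\Omega)$ and $g$ is weakly divergence-free. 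Hence $J_{\delta}g \in \mathcal{V}^{\widetilde{\Phi}}_{\textup{div}}$, so the hypothesis yields $\int_{\Omega} v\cdot J_{\delta}g\, d\mu = 0$ for every $\delta > 0$.

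Finally I would pass to the limit $\delta \to 0$: by Lemma \ref{rem3} applied componentwise, $J_{\delta}g \to g$ in $L^{\widetilde{\Phi}}(\Omega)^{N}$, and since $v \in L^{\Phi}(\Omega)^{N}$ the generalized H\"older inequality makes the functional $w \mapsto \int_{\Omega} v\cdot w\, d\mu$ continuous on $L^{\widetilde{\Phi}}(\Omega)^{N}$; therefore $\int_{\Omega} v\cdot g\, d\mu = 0$. This establishes the reduced implication and, through the bipolar argument, gives $v \in G^{\Phi}$, i.e.\ $v = \overline{D}_{\omega,\Phi}u$ for some $u \in W^{1}_{\#}L^{\Phi}(\Omega)$.

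I expect the main obstacle to be the stability of the divergence-free condition under $J_{\delta}$: one must justify the Fubini interchange for the $L^{\widetilde{\Phi}}$-valued integrand and the commutation of $J_{\delta}$ with the stochastic derivatives rigorously, since $g$ itself need not lie in the domains $\textbf{D}_{i,\widetilde{\Phi}}$ --- the entire argument has to remain at the level of the weak (distributional) divergence, transferring all differentiation onto the test function $\phi$. The remaining ingredients (reflexivity, the duality identification $\big(L^{\Phi}(\Omega)^{N}\big)' = L^{\widetilde{\Phi}}(\Omega)^{N}$, the bipolar theorem, and the $L^{\widetilde{\Phi}}$-convergence of $J_{\delta}g$) are routine given the structural results already established.
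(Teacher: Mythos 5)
Your proposal is correct, and it shares with the paper the one genuinely analytic step, but the functional-analytic framing is different. The paper, following Sango--Woukeng's $L^{p}$ argument, runs the closed-range machinery for the adjoint pair: it checks that $\textup{div}_{\omega,\widetilde{\Phi}}$ is closed, that its adjoint is $-\overline{D}_{\omega,\Phi}$, that $\textup{Range}(\overline{D}_{\omega,\Phi})$ is closed (these three items are cited as verbatim transcriptions of the $L^{p}$ case and omitted), and finally that $v$ annihilates $\ker(\textup{div}_{\omega,\widetilde{\Phi}})$, which it proves by the $J_{\delta}$-mollification. You keep only the last two ingredients: closedness of $G^{\Phi}=\textup{Range}(\overline{D}_{\omega,\Phi})$, which is immediate from the isometric-embedding property recalled in Section 2 together with the completeness of $W^{1}_{\#}L^{\Phi}(\Omega)$, and the mollification; you replace the closed-range theorem and the unbounded-adjoint computation by plain Hahn--Banach separation for the closed subspace $G^{\Phi}$, observing that the annihilator of $G^{\Phi}$ in $L^{\widetilde{\Phi}}(\Omega)^{N}$ equals $\ker(\textup{div}_{\omega,\widetilde{\Phi}})$ by the very duality definition of the divergence operator, so no adjoint identity needs proving. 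The mollification step itself --- $J_{\delta}g\in\mathcal{V}^{\widetilde{\Phi}}_{\textup{div}}$ via Fubini, $\mu$-invariance and transfer of all derivatives onto the test function, then $J_{\delta}g\to g$ in $L^{\widetilde{\Phi}}(\Omega)^{N}$ by Lemma \ref{rem3} (applied with $\widetilde{\Phi}$, which is where $\widetilde{\Phi}\in\Delta_{2}$ enters) and H\"older --- is exactly the paper's proof of its item (4); you even state the convergence in the correct space $L^{\widetilde{\Phi}}(\Omega)^{N}$, where the paper's text has a typo ($L^{\Phi}(\Omega)^{N}$). What your route buys is self-containedness and economy: nothing about closedness of $\textup{div}_{\omega,\widetilde{\Phi}}$ or its adjoint has to be re-verified in the Orlicz setting, whereas the paper leaves those verifications to a citation. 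What the paper's route buys is the stronger structural output of the closed-range theorem (the full duality between the range of the gradient and the kernel of the divergence, a Weyl-type decomposition of $L^{\Phi}(\Omega)^{N}$), which may be reused elsewhere. One cosmetic point: reflexivity is not actually needed for your separation argument; only the identification of the dual of $L^{\Phi}(\Omega)^{N}$ with $L^{\widetilde{\Phi}}(\Omega)^{N}$, valid since $\Phi\in\Delta_{2}$, is used.
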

	\begin{proof}
			As in the proof of \cite[Proposition 1]{sango}, we need to check the following: 
			\begin{enumerate}
				\item[(1)] $\textup{div}_{\omega,\widetilde{\Phi}}$ is closed ;
				\item[(2)]  $(\textup{div}_{\omega,\widetilde{\Phi}})^{*} = - \overline{D}_{\omega,\Phi}$, where $(\textup{div}_{\omega,\widetilde{\Phi}})^{*}$ is the adjoint operator of $(\textup{div}_{\omega,\widetilde{\Phi}})^{*}$ ;
				\item[(3)]  $Range(\overline{D}_{\omega,\Phi})$ is closed in $L^{\Phi}(\Omega)^{N}$ and 
				\item[(4)]  $v$ is orthogonal to the kernel of $\textup{div}_{\omega,\widetilde{\Phi}}$. 
			\end{enumerate}
			However, the proofs of items (1)-(3) are entirely identical and are therefore omitted here.
			For item (4) it suffices to show that $\mathcal{V}^{\widetilde{\Phi}}_{\textup{div}}$ is dense in $\ker(\textup{div}_{\omega,\widetilde{\Phi}})$. To see this, let $g = (g_{i}) \in \ker(\textup{div}_{\omega,\widetilde{\Phi}})$, then by the Lemma \ref{rem3}, for $\delta > 0$, we get $J_{\delta}g \equiv (J_{\delta}g_{i}) \in \mathcal{C}^{\infty}(\Omega)^{N}$ and $J_{\delta}g \rightarrow g$ strongly in $L^{\Phi}(\Omega)^{N}$. Furthermore, for $\delta > 0$, $\textup{div}_{\omega,\widetilde{\Phi}} J_{\delta}g = 0$.   
			This completes the proof.  
	\end{proof}
	\begin{theorem}(Compactness 2)\label{lem29} \\
		Let $\Phi \in \Delta_{2}$ be a Young function and $\widetilde{\Phi} \in \Delta_{2}$ its conjugate. 
		Assume $(u_{\epsilon})_{\epsilon\in E}$ is a sequence in $W^{1}L^{\Phi}_{D_{x}}(Q; L^{\Phi}(\Omega))$ such that: 
		\begin{itemize}
			\item[i)] $(u_{\epsilon})_{\epsilon\in E}$ is bounded in $L^{\Phi}\left(Q\times\Omega\right)$ and $(D_{x}u_{\epsilon})_{\epsilon\in E}$ is bounded in $L^{\Phi}\left(Q\times\Omega\right)^{N}$.
		\end{itemize}
		Then there exist $u_{0} \in W^{1}L^{\Phi}_{D_{x}}(Q; I_{nv}^{\Phi}(\Omega))$, $u_{1} \in L^{1}\left(Q; W^{1}_{\#}L^{\Phi}(\Omega)\right)$ with $\overline{D}_{\omega}u_{1} \in L^{\Phi}(Q\times\Omega)^{N}$  and a subsequence $E'$ from $E$ such that  as $E' \ni \epsilon \rightarrow 0$, 
		\begin{itemize}
			\item[ii)] $u_{\epsilon} \rightarrow u_{0}$ stoch. mean in $L^{\Phi}(Q\times \Omega)$-weak 2s,
			\item[iii)] $D_{x}u_{\epsilon} \rightarrow D_{x}u_{0} + \overline{D}_{\omega}u_{1}$ stoch. mean in $L^{\Phi}(Q\times \Omega)^{N}$-weak 2s, with $u_{1} \in L^{\Phi}(Q; W^{1}_{\#}L^{\Phi}(\Omega))$ when $\widetilde{\Phi} \in \Delta'$.  
		\end{itemize}
	\end{theorem}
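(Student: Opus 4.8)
The plan is to follow the scheme of the $L^{p}$-analogue in \cite{sango}, transferring each step to the Orlicz setting, where the hypotheses $\Phi,\widetilde{\Phi}\in\Delta_{2}$ ensure that $L^{\Phi}(Q\times\Omega)$ is separable and reflexive and the generalized H\"older inequality (property iv) plays the role of the Cauchy--Schwarz inequality. First I would apply Theorem \ref{lem4} to the bounded family $(u_{\epsilon})$ and, by a finite diagonal procedure, to each component of the bounded family $(D_{x}u_{\epsilon})$, obtaining a single subsequence $E'$, a limit $u_{0}\in L^{\Phi}(Q\times\Omega)$ and a field $\chi=(\chi_{i})\in L^{\Phi}(Q\times\Omega)^{N}$ with $u_{\epsilon}\to u_{0}$ and $D_{x}u_{\epsilon}\to\chi$ stochastically two-scale in the mean along $E'$. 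Everything then reduces to identifying $u_{0}$ and $\chi$.

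To see that $u_{0}(x,\cdot)$ is invariant, I would fix $\varphi\in\mathcal{C}^{\infty}_{0}(Q)$ and $\psi\in\mathcal{C}^{\infty}(\Omega)$, integrate by parts in $x$, and use the realization rule (\ref{lem18}) to compute $D_{x_{i}}[\psi(T(\epsilon^{-1}x)\omega)]=\epsilon^{-1}(D_{i}\psi)(T(\epsilon^{-1}x)\omega)$. Multiplying the resulting identity by $\epsilon$, the boundedness of $(u_{\epsilon})$ and $(D_{x}u_{\epsilon})$ makes every term carrying a factor $\epsilon$ vanish, and the single surviving term passes to the two-scale limit to give $\iint_{Q\times\Omega}u_{0}\,\varphi\,(D_{i}\psi)\,dx\,d\mu=0$ for all $\varphi,\psi$. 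By the converse characterization of the stochastic derivative recorded just after (\ref{deri2}), this forces $D_{i,\Phi}u_{0}(x,\cdot)=0$ for every $i$ and a.e.\ $x$, hence $u_{0}(x,\cdot)\in I_{nv}^{\Phi}(\Omega)$.

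To identify $\chi$, I would first test $D_{x}u_{\epsilon}\to\chi$ against $\varphi(x)\theta(\omega)$ with $\theta$ a bounded invariant function, which is legitimate by the extension of the test class to $\mathcal{C}(Q;L^{\infty}(\Omega))$ in Remark \ref{lem16}; since $\theta\circ T=\theta$ no oscillation appears, and integrating by parts in $x$ yields $\iint\chi_{i}\varphi\theta=-\iint u_{0}D_{x_{i}}\varphi\,\theta$, which identifies $D_{x_{i}}u_{0}$ with the invariant part of $\chi_{i}$ and, as $\chi_{i}\in L^{\Phi}(Q\times\Omega)$, shows $u_{0}\in W^{1}L^{\Phi}_{D_{x}}(Q;I_{nv}^{\Phi}(\Omega))$. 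Next I would test against $\varphi(x)g(\omega)$ with $g\in\mathcal{V}^{\widetilde{\Phi}}_{\textup{div}}$: the would-be singular term equals $\epsilon^{-1}\varphi\,(\textup{div}_{\omega,\widetilde{\Phi}}g)(T(\epsilon^{-1}x)\omega)=0$, so no blow-up occurs; passing to the limit and integrating by parts back in $x$ (now permissible since $D_{x}u_{0}\in L^{\Phi}$) gives $\iint(\chi-D_{x}u_{0})\cdot\varphi g\,dx\,d\mu=0$. With $\varphi$ arbitrary this means $\int_{\Omega}(\chi(x,\cdot)-D_{x}u_{0}(x,\cdot))\cdot g\,d\mu=0$ for a.e.\ $x$ and all $g\in\mathcal{V}^{\widetilde{\Phi}}_{\textup{div}}$, whereupon Lemma \ref{lem5} supplies $u_{1}(x,\cdot)\in W^{1}_{\#}L^{\Phi}(\Omega)$ with $\chi(x,\cdot)-D_{x}u_{0}(x,\cdot)=\overline{D}_{\omega}u_{1}(x,\cdot)$.

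Finally, since $\overline{D}_{\omega}u_{1}=\chi-D_{x}u_{0}\in L^{\Phi}(Q\times\Omega)^{N}$ and $\|u_{1}(x,\cdot)\|_{W^{1}_{\#}L^{\Phi}(\Omega)}=\sum_{i}\|\overline{D}_{i,\Phi}u_{1}(x,\cdot)\|_{L^{\Phi}(\Omega)}$, measurability in $x$ and $L^{1}$-integrability follow at once, giving $u_{1}\in L^{1}(Q;W^{1}_{\#}L^{\Phi}(\Omega))$; when in addition $\widetilde{\Phi}\in\Delta'$, property (ix) together with the estimate (\ref{be1}), applied to each component, upgrades this to $u_{1}\in L^{\Phi}(Q;W^{1}_{\#}L^{\Phi}(\Omega))$. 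I expect the structural step above to be the main obstacle: one must ensure that $\mathcal{V}^{\widetilde{\Phi}}_{\textup{div}}$ is rich enough for the orthogonality conclusion and for Lemma \ref{lem5} to apply in the Orlicz framework, and that the corrector $u_{1}(x,\cdot)$, produced only pointwise in $x$, depends measurably and with the correct integrability on $x$ --- exactly the point where the $\Delta'$-condition and the estimate (\ref{be1}) become indispensable.
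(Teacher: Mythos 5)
Your proposal is correct and follows essentially the same route as the paper's proof: extraction of the two-scale limits $u_{0}$ and $\chi$ via Theorem \ref{lem4}, the $\epsilon$-scaled oscillating test functions $\epsilon\varphi(x)\psi(T(\epsilon^{-1}x)\omega)$ to obtain invariance of $u_{0}(x,\cdot)$, divergence-free test fields $\varphi(x)\mathbf{\Psi}(T(\epsilon^{-1}x)\omega)$ with $\mathbf{\Psi}\in\mathcal{V}^{\widetilde{\Phi}}_{\textup{div}}$ followed by Lemma \ref{lem5} to produce the corrector $u_{1}$, and the same integrability bookkeeping ($\overline{D}_{\omega}u_{1}\in L^{\Phi}(Q\times\Omega)^{N}\subset L^{1}(Q;L^{\Phi}(\Omega))$, then property ix) and (\ref{be1}) under $\widetilde{\Phi}\in\Delta'$). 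The one place you genuinely deviate is the step $u_{0}\in W^{1}L^{\Phi}_{D_{x}}(Q;I_{nv}^{\Phi}(\Omega))$: the paper extracts a weakly convergent subsequence in the reflexive space $W^{1}L^{\Phi}_{D_{x}}(Q;L^{\Phi}(\Omega))$ and invokes \cite[Lemma 3.6]{bourgeat} to identify the limit, whereas you test the two-scale convergence of $D_{x}u_{\epsilon}$ against $\varphi\otimes\theta$ with $\theta$ bounded and invariant (legitimate by Remark \ref{lem16}) and identify $D_{x_{i}}u_{0}$ with the invariant part of $\chi_{i}$. Your variant is self-contained, but the phrase ``invariant part'' conceals a conditional-expectation argument that should be made explicit: one needs that the projection $E(\cdot\,|\mathcal{I})$ onto invariant functions is an $L^{\Phi}$-contraction (Jensen) and is self-adjoint, so that for the invariant function $u_{0}(x,\cdot)$ the pairing against an arbitrary $g\in\mathcal{C}^{\infty}(\Omega)$ reduces to the pairing against $E(g\,|\mathcal{I})$; with that supplied, the identity $D_{x_{i}}u_{0}=E(\chi_{i}\,|\mathcal{I})\in L^{\Phi}(Q\times\Omega)$ follows and the rest of your argument goes through exactly as in the paper.
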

	\begin{proof}
			Let us first note that by hypothesis $i)$, Theorem \ref{lem4} implies the existence
			of a subsequence  $E'$ from $E$, a function $u_{0} \in L^{\Phi}(Q\times \Omega)$ and a vector function $\mathbf{v} = (v_{i})_{1\leq i \leq N} \in L^{\Phi}(Q\times \Omega)^{N}$ such that, as $E' \ni \epsilon \rightarrow 0$, we have  $u_{\epsilon} \rightarrow u_{0}$ stoch. mean in $L^{\Phi}(Q\times \Omega)$-weak 2s and 
		\begin{equation*}
			D_{x}u_{\epsilon} \rightarrow \mathbf{v} \;\;\; \textup{stoch.\; mean\; in} \; L^{\Phi}(Q\times \Omega)^{N}-weak\;2s.
		\end{equation*} 
			To complete the proof, we must check that: 
		\begin{enumerate}
			\item[(a)] $u_{0}(x,\cdot) \in I_{nv}^{\Phi}(\Omega)$, that is $D_{\omega}u_{0}(x,\cdot) = 0$ or equivalently  $\int_{\Omega} u_{0}(x,\cdot)D_{i,\Phi}\varphi \, d\mu = 0$ for all $\varphi \in \mathcal{C}^{\infty}(\Omega)$ and $u_{0} \in W^{1}L^{\Phi}(Q; I_{nv}^{\Phi}(\Omega))$ ;
			\item[(b)]  there exists a function $u_{1} \in L^{1}\left(Q; W^{1}_{\#}L^{\Phi}(\Omega)\right)$ with $\overline{D}_{\omega}u_{1} \in L^{\Phi}(Q\times\Omega)^{N}$ such that $\mathbf{v} = D_{x}u_{0} + \overline{D}_{\omega}u_{1}$. The fact that $u_{1} \in L^{\Phi}\left(Q; W^{1}_{\#}L^{\Phi}(\Omega)\right)$ if $\widetilde{\Phi} \in \Delta'$  will follow from  \cite[Remark 2]{tacha2}.
		\end{enumerate}
		Let us first check (a) : Let $\mathbf{\Psi}_{\epsilon}(x,\omega) = \epsilon \varphi(x)f(T(\epsilon^{-1}x)\omega)$ for $(x,\omega) \in Q\times \Omega$ where $\varphi \in \mathcal{C}^{\infty}_{0}(Q)$ and $f \in \mathcal{C}^{\infty}(\Omega)$. Then 
		\begin{equation*}
			\iint_{Q\times \Omega} \dfrac{\partial u_{\epsilon}}{\partial x_{i}}\mathbf{\Psi}_{\epsilon} \, dxd\mu = - \iint_{Q\times \Omega} \epsilon\, u_{\epsilon} f^{\epsilon}\dfrac{\partial \varphi}{\partial x_{i}}dxd\mu - \iint_{Q\times \Omega}  u_{\epsilon}\varphi(D_{i,\omega}f^{\epsilon})dxd\mu  
		\end{equation*}
		where $f^{\epsilon}(x,\omega) = f(T(\epsilon^{-1}x)\omega)$. Letting $E' \ni \epsilon \rightarrow 0$, we get 
		\begin{equation*}
			\iint_{Q\times \Omega}  u_{0}\varphi(D_{i,\omega}f)dxd\mu = 0.
		\end{equation*} 
		Hence $\displaystyle \int_{\Omega} u_{0}(x,\cdot)D_{i,\omega}f d\mu = 0 $ for all $1\leq i \leq N$ and all $f \in \mathcal{C}^{\infty}(\Omega)$, which is equivalent to say that $u_{0}(x,\cdot) \in I_{nv}^{\Phi}(\Omega)$ for a.e. $x \in Q$. \\
		Hypothesis (i) implies that the sequence $(u_{\epsilon})_{\epsilon\in E'}$ is bounded in $W^{1}L^{\Phi}_{D_{x}}(Q ; L^{\Phi}(\Omega))$, which yields the existence of a subsequence of $E'$ not relabeled and of a function $u \in W^{1}L^{\Phi}_{D_{x}}(Q ; L^{\Phi}(\Omega))$ such that $u_{\epsilon} \rightarrow u$ in $W^{1}L^{\Phi}_{D_{x}}(Q ; L^{\Phi}(\Omega))$-weak as $E' \ni \epsilon \rightarrow 0$. In particular $\displaystyle \int_{\Omega} u_{\epsilon}(\cdot,\omega)\psi(\omega)d\mu \rightarrow \int_{\Omega} u(\cdot,\omega)\psi(\omega)d\mu$ in $L^{1}(Q)$-weak for all $\psi \in I_{nv}^{\widetilde{\Phi}}(\Omega)$. Therefore using \cite[Lemma 3.6]{bourgeat}, we get at once $u_{0} \in W^{1}L^{\Phi}_{D_{x}}(Q ; L^{\Phi}(\Omega))$, so that $u_{0} \in W^{1}L^{\Phi}_{D_{x}}(Q ; I_{nv}^{\Phi}(\Omega))$. \\
		It remains to check (b). Let $\mathbf{\Psi}_{\epsilon}(x,\omega) = \varphi(x)\mathbf{\Psi}(T(\epsilon^{-1}x)\omega)$ with $\varphi \in \mathcal{C}^{\infty}_{0}(Q)$ and $\mathbf{\Psi} = (\psi_{j})_{1\leq j\leq N} \in \mathcal{V}^{\widetilde{\Phi}}_{\textup{div}}$ (i.e. $\textup{div}_{\omega,\widetilde{\Phi}}\mathbf{\Psi} = 0$). Clearly 
		\begin{equation*}
			\sum_{j=1}^{N} \iint_{Q\times \Omega} \dfrac{\partial u_{\epsilon}}{\partial x_{j}} \varphi\, \psi_{j}^{\epsilon}\, dxd\mu = - \sum_{j=1}^{N} \iint_{Q\times \Omega} u_{\epsilon }\psi_{j}^{\epsilon}\dfrac{\partial \varphi}{\partial x_{j}}\, dxd\mu
		\end{equation*}
		where $\psi_{j}^{\epsilon}(x,\omega) = \psi_{j}(T(\epsilon^{-1}x)\omega)$. The limit passage (when $E' \ni \epsilon \rightarrow 0$) yields 
		\begin{equation*}
			\sum_{j=1}^{N} \iint_{Q\times \Omega} v_{j} \varphi\, \psi_{j}\, dxd\mu = - \sum_{j=1}^{N} \iint_{Q\times \Omega} u_{0 }\psi_{j}\dfrac{\partial \varphi}{\partial x_{j}}\, dxd\mu
		\end{equation*}
		or equivalently
		\begin{equation*}
			\iint_{Q\times \Omega} \left( \textbf{v}(x,\omega) - Du_{0}(x,\omega)  \right)\cdot \mathbf{\Psi}(\omega)\varphi(x) \, dxd\mu = 0,
		\end{equation*}
		and so, as $\varphi$ is arbitrarily fixed in $\mathcal{C}^{\infty}_{0}(Q)$,
		\begin{equation*}
			\int_{\Omega} \left( \mathbf{v}(x,\omega) - Du_{0}(x,\omega)  \right)\cdot \mathbf{\Psi}(\omega) \,d\mu = 0, \;\;\;  
		\end{equation*}
		for $\mathbf{\Psi} \in \mathcal{V}^{\widetilde{\Phi}}_{\textup{div} }$ and for a.e. $x\in Q$, where $\left( \mathbf{v} - Du_{0}  \right) \in L^{\Phi}(Q\times\Omega)^{N}$. Note that, property ix) (see Page \pageref{property9}) implies that $\left( \mathbf{v}(x,\cdot) - Du_{0}(x,\cdot)  \right) \in L^{\Phi}(\Omega)^{N}$.  Therefore, Lemma \ref{lem5} provides us with a unique $u_{1}(x, \cdot) \in  W^{1}_{\#}L^{\Phi}(\Omega)$ such that 
		\begin{equation*}
			\mathbf{v}(x,\cdot) - Du_{0}(x,\cdot) = \overline{D}_{\omega}u_{1}(x,\cdot) \;\; \textup{for} \;\, \mu\textup{-a.e.} \;\, x \in Q.
		\end{equation*} 
			Hence, $\overline{D}_{\omega}u_{1} \in L^{\Phi}(Q\times\Omega)^{N} \subset  L^{1}\left(Q; L^{\Phi}(\Omega)\right)$, (see, e.g. \cite[Page 126]{tacha3}), and so $u_{1} \in  L^{1}\left(Q; W^{1}_{\#}L^{\Phi}(\Omega)\right)$. Finally, if $\widetilde{\Phi} \in \Delta'$,  it follows that $u_{1} \in L^{\Phi}\left(Q; W^{1}_{\#}L^{\Phi}(\Omega)\right)$ (see, e.g. \cite[Remark 2]{tacha2}).  
	\end{proof}


	\section{Application to the homogenization of an integral functional with convex integrand and Nonstandard growth}\label{sect4}
	
	\subsection{Setting of the problem : Existence and uniqueness of minimizers}
	Let $(\Omega, \mathscr{M}, \mu)$ be a measure space with probability measure $\mu$ and $\{ T(x), \, x \in \mathbb{R}^{N} \}$ be a fixed $N$-dimensional dynamical system on $\Omega$ which is invariant for $T$. Let $\Phi \in \Delta_{2}$ be a Young function and $\widetilde{\Phi} \in \Delta_{2}$ its conjugate. \\
	Let $(\omega,\lambda) \rightarrow f(\omega,\lambda)$ be a function from $\Omega\times \mathbb{R}^{N}$ into $\mathbb{R}$ satisfying the following properties:
	\begin{itemize}
		\item[(H$_{1}$)] $f(\cdot, \lambda)$ is measurable for all $\lambda \in \mathbb{R}^{N}$ ;
		\item[(H$_{2}$)] $f(\omega,\cdot)$ is strictly convex for almost all $\omega \in \Omega$ ;
		\item[(H$_{3}$)] There are two constants $c_{1}, \, c_{2} > 0$ such that 
		\begin{equation*}
			c_{1} \Phi(|\lambda|) \leq f(\omega,\lambda) \leq c_{2}(1+ \Phi(|\lambda|))
		\end{equation*}
		for all $\lambda = (\lambda_{i})$ and for almost all $\omega \in \Omega$.
	\end{itemize}
	Let $Q$ be a bounded open set in $\mathbb{R}^{N}_{x}$ (the space $\mathbb{R}^{N}$ of variables $x = (x_{1}, \cdots , x_{N})$). \\
	For each given $\epsilon > 0$, let $F_{\epsilon}$ be the integral functional defined by 
	\begin{equation}\label{lem13}
		F_{\epsilon}(v) = \iint_{Q\times \Omega} f\left( T(\epsilon^{-1}x)\omega, \, Dv(x,\omega) \right)dxd\mu, \quad v \in W^{1}_{0}L^{\Phi}_{D_{x}}(Q ; L^{\Phi}(\Omega)),
	\end{equation} 
	where $D$ denotes the usual gradient operator in $Q$, i.e. $D = D_{x}= \left(\frac{\partial}{\partial x_{i}} \right)_{1\leq i\leq N}$ and $f$ is a random integrand satisfying above conditions $(H_{1})-(H_{3})$.  \\
	We intend to study the asymptotic behaviour (as $0 < \epsilon \rightarrow 0$) of the sequence of solutions to the problems 
	\begin{equation}\label{lem26}
		\min \left\{ F_{\epsilon}(v) : v \in W^{1}_{0}L^{\Phi}_{D_{x}}(Q ; L^{\Phi}(\Omega)) \right\}.
	\end{equation}
		Let us begin with the following lemma  whose the proof is entirely identical to the proof of \cite[Lemma 3.1]{tacha1}, and is therefore omitted here.  
	\begin{lemma}\label{lem9}
		Let $(\omega,\lambda) \rightarrow g(\omega,\lambda)$ be a function from $\Omega\times \mathbb{R}^{N}$ into $\mathbb{R}$ satisfying the following properties :
		\begin{itemize}
			\item[(i)] $g(\cdot, \lambda)$ is measurable for all $\lambda \in \mathbb{R}^{N}$ ;
			\item[(ii)] $g(\omega,\cdot)$ is strictly convex for almost all $\omega \in \Omega$ ;
			\item[(iii)] There exists  $c_{0} > 0$ such that 
			\begin{equation} \label{lem8}
				| g(\omega,\lambda)| \leq c_{0}(1+ \Phi(|\lambda|))
			\end{equation}
			for all $\lambda = (\lambda_{i})$ and for almost all $\omega \in \Omega$.
		\end{itemize}
		Then $g$ is continuous in the second argument. Precisely one has
		\begin{equation}\label{lem7}
			| g(\omega,\lambda) - g(\omega,\nu) | \leq c'_{0} \dfrac{1 + \Phi(2(1+ |\lambda| + |\nu|))}{1+ |\lambda| + |\nu|} |\lambda - \nu |,
		\end{equation}
		for all $\lambda, \nu \in \mathbb{R}^{N}$ and for almost all $\omega \in \Omega$, with $c'_{0} = 2Nc_{0}$.
	\end{lemma}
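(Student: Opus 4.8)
The plan is to establish the quantitative bound (\ref{lem7}) pointwise in $\omega$, on the full-measure set where hypotheses (ii) and (iii) hold; measurability (i) only serves to make the resulting inequality meaningful almost everywhere, and the asserted continuity is then an immediate consequence of (\ref{lem7}). Fix such an $\omega$ and write $h = g(\omega,\cdot)$, a finite convex function on $\mathbb{R}^{N}$; note that only plain convexity from (ii) is used here, strict convexity being reserved for the uniqueness of minimizers. The whole argument rests on the classical quantitative local Lipschitz estimate for convex functions, which I would prove first: if $h$ is convex and $|h|\leq M$ on the closed ball $\overline{B}_{R}=\{\zeta\in\mathbb{R}^{N}:|\zeta|\leq R\}$, then for all $p,q\in\overline{B}_{r}$ with $r<R$ one has $|h(p)-h(q)|\leq \frac{2M}{R-r}\,|p-q|$. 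To see this, extend the segment beyond $q$ to $w=q+(R-r)\frac{q-p}{|q-p|}$, so that $|w|\leq |q|+(R-r)\leq R$, and write $q=(1-s)p+sw$ with $s=\frac{|q-p|}{|q-p|+(R-r)}\in(0,1)$; convexity gives $h(q)-h(p)\leq s\,(h(w)-h(p))\leq s\cdot 2M\leq \frac{2M}{R-r}|q-p|$, and the reverse inequality follows by extending symmetrically beyond $p$.

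Next I would reduce the full $N$-dimensional increment to $N$ axis-parallel increments by a telescoping along the coordinates. Setting $z_{0}=\nu$ and $z_{i}=(\lambda_{1},\dots,\lambda_{i},\nu_{i+1},\dots,\nu_{N})$ for $1\leq i\leq N$ (so that $z_{N}=\lambda$), one has $h(\lambda)-h(\nu)=\sum_{i=1}^{N}\bigl(h(z_{i})-h(z_{i-1})\bigr)$, and consecutive points $z_{i-1},z_{i}$ differ only in the $i$-th coordinate, so that $|z_{i}-z_{i-1}|=|\lambda_{i}-\nu_{i}|$. A direct computation gives $|z_{i}|,|z_{i-1}|\leq \sqrt{|\lambda|^{2}+|\nu|^{2}}\leq |\lambda|+|\nu|$, so both lie in $\overline{B}_{r}$ with $r=|\lambda|+|\nu|$.

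I would then apply the sub-estimate to each pair $z_{i-1},z_{i}$ with $r=|\lambda|+|\nu|$ and $R=r+\rho$, where $\rho=1+|\lambda|+|\nu|$, so that $R-r=\rho$ and $R=2\rho-1\leq 2(1+|\lambda|+|\nu|)$. On $\overline{B}_{R}$, hypothesis (iii) together with the monotonicity of $\Phi$ yields $|h|\leq c_{0}(1+\Phi(R))\leq c_{0}\bigl(1+\Phi(2(1+|\lambda|+|\nu|))\bigr)$, whence $|h(z_{i})-h(z_{i-1})|\leq \frac{2c_{0}(1+\Phi(2(1+|\lambda|+|\nu|)))}{1+|\lambda|+|\nu|}\,|\lambda_{i}-\nu_{i}|$. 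Summing over $i$ and using $\sum_{i=1}^{N}|\lambda_{i}-\nu_{i}|\leq N|\lambda-\nu|$ (each coordinate difference is dominated by the Euclidean norm) gives exactly (\ref{lem7}) with $c_{0}'=2Nc_{0}$.

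The only genuinely delicate point is the bookkeeping of the two radii in the convex Lipschitz estimate: one must take the inner radius large enough to capture both telescoping points ($r=|\lambda|+|\nu|$) and the extension length equal to the desired denominator ($R-r=1+|\lambda|+|\nu|$), while keeping the outer radius $R$ below $2(1+|\lambda|+|\nu|)$ so that the growth bound (iii) produces precisely the factor $\Phi(2(1+|\lambda|+|\nu|))$. Everything else — the telescoping identity, the bound $|z_{i}|\leq|\lambda|+|\nu|$, and the final summation producing the factor $N$ — is routine.
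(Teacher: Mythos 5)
Your proof is correct, and it supplies essentially the argument the paper itself relies on: the paper omits the proof of this lemma entirely, stating it is identical to that of Lemma 3.1 in \cite{tacha1}, and that proof is precisely the coordinate-by-coordinate telescoping combined with the quantitative local Lipschitz estimate for convex functions bounded on a ball. Your bookkeeping of the radii ($r=|\lambda|+|\nu|$, $R-r=1+|\lambda|+|\nu|$, $R\leq 2(1+|\lambda|+|\nu|)$) reproduces exactly the stated constant $c_{0}'=2Nc_{0}$ and the argument $2(1+|\lambda|+|\nu|)$ of $\Phi$, so nothing is missing.
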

	
	Let now $\mathbf{v} = (v_{i}) \in \left[\mathcal{C}^{\infty}(\Omega)\otimes \mathcal{C}(\overline{Q}, \mathbb{R}) \right]^{N} = \mathcal{C}^{\infty}(\Omega)\otimes \mathcal{C}(\overline{Q}, \mathbb{R}) \times \cdots \times \mathcal{C}^{\infty}(\Omega)\otimes \mathcal{C}(\overline{Q}, \mathbb{R})$, $N$-times. By (\ref{lem8}), it is an easy task to check that the function $\omega \rightarrow g(\omega, \mathbf{v}(x,\omega))$ of $\Omega$ into $\mathbb{R}$ denoted by $g(\cdot, \mathbf{v}(x,\cdot))$ belongs to $L^{\infty}(\Omega)$ (we refer to \cite[Page 4]{nanguet1} for the case $\Omega \subset \mathbb{R}^{N}$). This implies that the function
	$(x, \omega) \rightarrow g(\omega, \mathbf{v}(x,\omega))$ of $Q\times\Omega$ into $\mathbb{R}$ denoted by $g(\cdot, \mathbf{v})$ belongs to $\mathcal{C}(\overline{Q}; L^{\infty}(\Omega))$. Hence for each $\epsilon > 0$, the function $(x, \omega) \rightarrow g(T(\epsilon^{-1}x)\omega, \mathbf{v}(x,\omega))$ of $Q\times\Omega$ into $\mathbb{R}$, denoted by $g^{\epsilon}(\cdot, \mathbf{v})$ is well defined as an element of $L^{\infty}(Q\times \Omega, \mathbb{R})$, and we have the following proposition and corollary.
	
	\begin{proposition}\label{lem14}
		Given $\epsilon >0$, the transformation $\mathbf{v} \rightarrow g^{\epsilon}(\cdot, \mathbf{v})$ of $\left[\mathcal{C}^{\infty}(\Omega)\otimes \mathcal{C}(\overline{Q}, \mathbb{R}) \right]^{N}$ into $L^{\infty}(Q\times \Omega, \mathbb{R})$ extends by continuity to a mapping, still denoted by $\mathbf{v} \rightarrow g^{\epsilon}(\cdot, \mathbf{v})$, of
		$\left[L^{\Phi}(Q\times \Omega, \mathbb{R}) \right]^{N}$ into $L^{1}(Q\times \Omega, \mathbb{R})$ with the property 
		\begin{equation*}\label{lem12}
			\begin{array}{rl}
				& \parallel g^{\epsilon}(\cdot, \mathbf{v}) - g^{\epsilon}(\cdot, \mathbf{w}) \parallel_{L^{1}(Q\times \Omega)} \\
				\leq & c\, \left( \|1\|_{\widetilde{\Phi},Q\times\Omega} + \parallel \phi( 1+|\mathbf{v}|+ |\mathbf{w}|)\parallel_{\widetilde{\Phi},Q\times \Omega} \right)\parallel \mathbf{v}-\mathbf{w}  \parallel_{\left[L^{\Phi}(Q\times \Omega) \right]^{N}}
			\end{array}
		\end{equation*}
		for all $\mathbf{v}, \mathbf{w} \in \left[L^{\Phi}(Q\times \Omega, \mathbb{R}) \right]^{N}$.
	\end{proposition}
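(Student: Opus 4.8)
The plan is to first establish the claimed inequality for $\mathbf{v},\mathbf{w}$ lying in the dense subspace $X=\left[\mathcal{C}^{\infty}(\Omega)\otimes\mathcal{C}(\overline{Q},\mathbb{R})\right]^{N}$, on which $g^{\epsilon}(\cdot,\mathbf{v})$ is already defined as an element of $L^{\infty}(Q\times\Omega)\subset L^{1}(Q\times\Omega)$ (the inclusion being valid because $Q\times\Omega$ has finite measure), and then to extend the map to all of $\left[L^{\Phi}(Q\times\Omega)\right]^{N}$ by a density-and-completeness argument. The estimate itself will show that $\mathbf{v}\mapsto g^{\epsilon}(\cdot,\mathbf{v})$ is continuous on $X$ for the topology induced by $\left[L^{\Phi}(Q\times\Omega)\right]^{N}$, which is what makes the extension possible.

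For the pointwise part, fix $\epsilon>0$ and apply the continuity estimate \eqref{lem7} of Lemma \ref{lem9} at the point $T(\epsilon^{-1}x)\omega$ with $\lambda=\mathbf{v}(x,\omega)$ and $\nu=\mathbf{w}(x,\omega)$; since \eqref{lem7} holds for a.e.\ $\omega$ and $T$ is measure preserving, it holds at $T(\epsilon^{-1}x)\omega$, and its right-hand side involves $\mathbf{v},\mathbf{w}$ only through their values at $(x,\omega)$, so no change of variables in $\Omega$ is needed. It then remains to bound the factor
\[
\frac{1+\Phi\!\left(2(1+|\mathbf{v}|+|\mathbf{w}|)\right)}{1+|\mathbf{v}|+|\mathbf{w}|}
\]
by $c''\bigl(1+\phi(1+|\mathbf{v}|+|\mathbf{w}|)\bigr)$ for some constant $c''$. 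Writing $t=1+|\mathbf{v}|+|\mathbf{w}|\ge 1$, I would use the $\Delta_{2}$-condition to get $\Phi(2t)\le k\Phi(t)$ for $t\ge t_{0}$, together with $\Phi(t)=\int_{0}^{t}\phi(\tau)\,d\tau\le t\,\phi(t)$ (this is the first inequality in \eqref{lem10}), to obtain $\Phi(2t)/t\le k\,\phi(t)$; on the compact range $t\in[1,t_{0}]$ the quotient is bounded by continuity, and since $1/t\le 1$ for $t\ge 1$ the desired bound follows after adjusting $c''$.

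Integrating the resulting pointwise inequality over $Q\times\Omega$ and applying the generalized H\"older inequality in Orlicz spaces to the product $\bigl(1+\phi(1+|\mathbf{v}|+|\mathbf{w}|)\bigr)\,|\mathbf{v}-\mathbf{w}|$ yields
\[
\|g^{\epsilon}(\cdot,\mathbf{v})-g^{\epsilon}(\cdot,\mathbf{w})\|_{L^{1}(Q\times\Omega)}\le c\bigl(\|1\|_{\widetilde{\Phi},Q\times\Omega}+\|\phi(1+|\mathbf{v}|+|\mathbf{w}|)\|_{\widetilde{\Phi},Q\times\Omega}\bigr)\|\mathbf{v}-\mathbf{w}\|_{\left[L^{\Phi}(Q\times\Omega)\right]^{N}},
\]
which is exactly the asserted estimate. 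For this to be meaningful one must know that $\phi(1+|\mathbf{v}|+|\mathbf{w}|)\in L^{\widetilde{\Phi}}(Q\times\Omega)$: this follows from Lemma \ref{lem11}, which gives $\widetilde{\Phi}(\phi(t))\le k\,\Phi(t)$ for large $t$, combined with the $\Delta_{2}$-condition and convexity to control $\iint_{Q\times\Omega}\Phi(1+|\mathbf{v}|+|\mathbf{w}|)$ by the modulars of $\mathbf{v}$ and $\mathbf{w}$, hence by their $L^{\Phi}$-norms.

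Finally, to extend the map to $\left[L^{\Phi}(Q\times\Omega)\right]^{N}$, take $\mathbf{v}\in\left[L^{\Phi}(Q\times\Omega)\right]^{N}$ and a sequence $(\mathbf{v}_{n})\subset X$ with $\mathbf{v}_{n}\to\mathbf{v}$ in $\left[L^{\Phi}(Q\times\Omega)\right]^{N}$. Applying the estimate to $\mathbf{v}_{n},\mathbf{v}_{m}$ shows that $(g^{\epsilon}(\cdot,\mathbf{v}_{n}))$ is Cauchy in $L^{1}(Q\times\Omega)$, provided the prefactor $\|\phi(1+|\mathbf{v}_{n}|+|\mathbf{v}_{m}|)\|_{\widetilde{\Phi}}$ stays bounded; one then defines $g^{\epsilon}(\cdot,\mathbf{v})$ as the $L^{1}$-limit, checks it is independent of the approximating sequence, and lets the inequality pass to the limit. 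I expect the main obstacle to be precisely this uniform control of $\|\phi(1+|\mathbf{v}_{n}|+|\mathbf{v}_{m}|)\|_{\widetilde{\Phi}}$ along bounded sequences: it is here that the strength of the $\Delta_{2}$-condition on both $\Phi$ and $\widetilde{\Phi}$ is used, since boundedness of $(\mathbf{v}_{n})$ in $L^{\Phi}$ must be converted, through the modular estimates above, into boundedness of $\phi(1+|\mathbf{v}_{n}|+|\mathbf{v}_{m}|)$ in $L^{\widetilde{\Phi}}$.
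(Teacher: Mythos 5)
Your proof is correct and follows essentially the same route as the paper, whose proof of Proposition \ref{lem14} is precisely an application of Lemma \ref{lem9}: the pointwise estimate (\ref{lem7}) converted (via $\Delta_{2}$ and (\ref{lem10})) into the form $c\,(1+\phi(1+|\mathbf{v}|+|\mathbf{w}|))\,|\mathbf{v}-\mathbf{w}|$, then the generalized H\"{o}lder inequality and a density extension, exactly as in the cited \cite[Proposition 3.1]{tacha1}. The ``main obstacle'' you flag at the end --- uniform control of $\|\phi(1+|\mathbf{v}_{n}|+|\mathbf{v}_{m}|)\|_{\widetilde{\Phi},Q\times\Omega}$ along $L^{\Phi}$-bounded sequences --- is not a genuine gap but is settled by the modular estimates you already set up: the $\Delta_{2}$-condition turns norm boundedness into modular boundedness, inequality (\ref{lem11}) then bounds the $\widetilde{\Phi}$-modular of $\phi(1+|\mathbf{v}_{n}|+|\mathbf{v}_{m}|)$ by a uniform constant, and a modular bound in turn controls the Luxemburg norm.
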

	The proof of the above proposition is similar to the proof of \cite[Proposition 3.1]{tacha1}, using Lemma \ref{lem9}. 
	\begin{corollary}\label{lem22}
		Under the hypothesis $(H_{1})-(H_{3})$, given $v \in W^{1}L^{\Phi}_{D_{x}}(Q ; L^{\Phi}(\Omega))$, the function $(x,\omega) \rightarrow f(T(\epsilon^{-1}x)\omega, Dv(x,\omega))$ of  $Q\times \Omega$ into $\mathbb{R}$, denoted $f^{\epsilon}(\cdot, Dv)$ is well defined as an element of $L^{1}(Q\times \Omega, \mathbb{R})$. Moreover we have 
		\begin{equation}\label{lem15}
			c_{1} \parallel Dv \parallel_{L^{\Phi}(Q\times\Omega)^{N}} \leq \parallel f^{\epsilon}(\cdot, Dv) \parallel_{L^{1}(Q\times\Omega)} \leq c'_{2}(1 + \parallel Dv \parallel_{L^{\Phi}(Q\times\Omega)^{N}}),
		\end{equation}	
		for all $v \in W^{1}_{0}L^{\Phi}_{D_{x}}(Q ; L^{\Phi}(\Omega))$, where $c'_{2} = c_{2}\max(1, |Q|)$ with $|Q| = \int_{Q} dx$. 
	\end{corollary}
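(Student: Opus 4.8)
The plan is to obtain the membership $f^{\epsilon}(\cdot,Dv)\in L^{1}(Q\times\Omega)$ directly from Proposition \ref{lem14}, and then to read off the two-sided estimate (\ref{lem15}) by integrating the growth condition $(H_{3})$ over $Q\times\Omega$ and comparing the resulting modular $\iint_{Q\times\Omega}\Phi(|\cdot|)\,dxd\mu$ with the Luxemburg norm on $L^{\Phi}(Q\times\Omega)^{N}$. First I would verify that $f$ is covered by Lemma \ref{lem9}: hypotheses $(H_{1})$ and $(H_{2})$ are exactly (i) and (ii), while the upper inequality in $(H_{3})$ is precisely (iii) with $c_{0}=c_{2}$. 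Hence Proposition \ref{lem14} applies with $g=f$; since $v\in W^{1}L^{\Phi}_{D_{x}}(Q;L^{\Phi}(\Omega))$ forces $\mathbf{v}=Dv\in\left[L^{\Phi}(Q\times\Omega)\right]^{N}$, the proposition yields that $f^{\epsilon}(\cdot,Dv)$ is a well-defined element of $L^{1}(Q\times\Omega)$.

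For the estimate itself I would integrate the pointwise bounds of $(H_{3})$. Note that $f\geq c_{1}\Phi(|\lambda|)\geq 0$, so no absolute value is lost, and since each $T(\epsilon^{-1}x)$ is invertible and $\mu$-measure preserving, the exceptional $\mu$-null set in $(H_{3})$ is preserved under the substitution $\omega\mapsto T(\epsilon^{-1}x)\omega$; thus for a.e. $(x,\omega)$,
\begin{equation*}
c_{1}\Phi(|Dv(x,\omega)|)\leq f\!\left(T(\epsilon^{-1}x)\omega,Dv(x,\omega)\right)\leq c_{2}\bigl(1+\Phi(|Dv(x,\omega)|)\bigr).
\end{equation*}
Integrating over $Q\times\Omega$ and using $\int_{Q}dx=|Q|$ together with $\mu(\Omega)=1$ gives
\begin{equation*}
c_{1}\iint_{Q\times\Omega}\Phi(|Dv|)\,dxd\mu\leq \|f^{\epsilon}(\cdot,Dv)\|_{L^{1}(Q\times\Omega)}\leq c_{2}|Q|+c_{2}\iint_{Q\times\Omega}\Phi(|Dv|)\,dxd\mu.
\end{equation*}
Observe that no realization of $T$ enters the modular of $Dv$, so this step is a direct integration of the growth bound.

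It remains to convert the modular $\iint_{Q\times\Omega}\Phi(|Dv|)\,dxd\mu$ into $\|Dv\|_{L^{\Phi}(Q\times\Omega)^{N}}$, and this is where the genuine work lies. I would invoke the classical comparison between the modular and the Luxemburg norm: the convexity estimate $\Phi(\delta t)\leq\delta\,\Phi(t)$ for $0\leq\delta\leq 1$ yields $\iint\Phi(|Dv|)\leq\|Dv\|_{L^{\Phi}}$ when the norm does not exceed $1$, whereas $\|Dv\|_{L^{\Phi}}\leq\iint\Phi(|Dv|)$ when it exceeds $1$. Splitting into these two regimes and absorbing $c_{2}|Q|$ into $c_{2}'=c_{2}\max(1,|Q|)$ produces the bounds, the vector-valued bookkeeping for $Dv$ being controlled by property ix) (inequality (\ref{be1})) under $\widetilde{\Phi}\in\Delta'$. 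The main obstacle is precisely this modular-to-norm passage: the direction of the inequality reverses between the two regimes $\|Dv\|_{L^{\Phi}}\lessgtr 1$, so the argument relies crucially on $\Phi,\widetilde{\Phi}\in\Delta_{2}\cap\Delta'$ and reproduces, in the Orlicz setting, the comparison carried out in \cite[Lemma 3.1]{tacha1}.
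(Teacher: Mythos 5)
Your first two steps are correct, and they are the only argument the paper could intend (the corollary is stated in the paper with no proof at all, immediately after Proposition \ref{lem14}): membership of $f^{\epsilon}(\cdot,Dv)$ in $L^{1}(Q\times\Omega)$ follows from Proposition \ref{lem14} with $g=f$, whose hypotheses hold via Lemma \ref{lem9} with $c_{0}=c_{2}$; and your handling of the exceptional null set in $(H_{3})$ (invertibility and $\mu$-invariance of each $T(\epsilon^{-1}x)$, then Fubini) is sound, as is the observation that $f\geq c_{1}\Phi\geq 0$ lets you drop absolute values. Integrating $(H_{3})$ then gives, exactly as you write,
\begin{equation*}
c_{1}\iint_{Q\times\Omega}\Phi(|Dv|)\,dxd\mu\;\leq\;\|f^{\epsilon}(\cdot,Dv)\|_{L^{1}(Q\times\Omega)}\;\leq\;c_{2}|Q|+c_{2}\iint_{Q\times\Omega}\Phi(|Dv|)\,dxd\mu .
\end{equation*}

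The genuine gap is your final, modular-to-norm step, and it cannot be closed. Your regime splitting proves the left inequality of (\ref{lem15}) only when $\|Dv\|_{L^{\Phi}(Q\times\Omega)^{N}}>1$ (where the norm is dominated by the modular) and the right inequality only when $\|Dv\|_{L^{\Phi}(Q\times\Omega)^{N}}\leq 1$ (where the modular is dominated by the norm); in the complementary regimes the comparisons point the wrong way, and the claim that $\Phi,\widetilde{\Phi}\in\Delta_{2}\cap\Delta'$ repairs this is false. Indeed, take $\Phi(t)=t^{2}$, $f(\omega,\lambda)=|\lambda|^{2}$ (all standing hypotheses hold, with $c_{1}=c_{2}=1$), $|Q|=1$, and fix $v_{1}\in W^{1}_{0}L^{\Phi}_{D_{x}}(Q;L^{\Phi}(\Omega))$ with $\|Dv_{1}\|_{L^{\Phi}(Q\times\Omega)^{N}}=1$, so that $\iint_{Q\times\Omega}|Dv_{1}|^{2}dxd\mu=1$. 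For $v=\delta v_{1}$ with $\delta<1$ one gets $\|f^{\epsilon}(\cdot,Dv)\|_{L^{1}}=\delta^{2}<\delta=c_{1}\|Dv\|_{L^{\Phi}(Q\times\Omega)^{N}}$, violating the lower bound, while for $v=Mv_{1}$ with $M\geq 2$ one gets $\|f^{\epsilon}(\cdot,Dv)\|_{L^{1}}=M^{2}>1+M=c_{2}'(1+\|Dv\|_{L^{\Phi}(Q\times\Omega)^{N}})$, violating the upper bound. So no argument can produce (\ref{lem15}) for all $v$ as stated; what your computation honestly establishes is the displayed modular estimate (equivalently, the norm lower bound for $\|Dv\|_{L^{\Phi}}>1$ and the norm upper bound for $\|Dv\|_{L^{\Phi}}\leq 1$), which is in fact all that the paper uses later: coercivity of $F_{\epsilon}$ needs only the large-norm regime, and boundedness of the minimizers' energies needs only the modular upper bound. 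You should state and prove that version rather than assert that the splitting ``produces the bounds''; also, your appeal to property ix) and inequality (\ref{be1}) plays no role in this argument and should be deleted.
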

	
	We are now able to prove the existence of a minimizer on $W^{1}_{0}L^{\Phi}_{D_{x}}(Q ; L^{\Phi}(\Omega))$, for each $\epsilon > 0$ of integral functional $F_{\epsilon}$ (see (\ref{lem13})). Clearly we have the following
	\begin{theorem}
		For $\epsilon > 0$, there exists a unique $u_{\epsilon} \in W^{1}_{0}L^{\Phi}_{D_{x}}(Q ; L^{\Phi}(\Omega))$ that minimizes $	F_{\epsilon}$ on  $W^{1}_{0}L^{\Phi}_{D_{x}}(Q ; L^{\Phi}(\Omega))$, i.e.,
		\begin{equation*}
			F_{\epsilon}(u_{\epsilon}) = \min \left\{ F_{\epsilon}(v) : v \in W^{1}_{0}L^{\Phi}_{D_{x}}(Q ; L^{\Phi}(\Omega)) \right\}.
		\end{equation*}
	\end{theorem}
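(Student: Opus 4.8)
The plan is to run the direct method of the calculus of variations, for which I need three ingredients: coercivity, weak lower semicontinuity of $F_{\epsilon}$, and weak sequential compactness of sublevel sets. The last is supplied for free by the reflexivity of $W^{1}_{0}L^{\Phi}_{D_{x}}(Q ; L^{\Phi}(\Omega))$ recorded at the end of Section \ref{sect2}, so the real work concerns the first two. Throughout I would use that, since $f \geq c_{1}\Phi(|\cdot|) \geq 0$ by $(H_{3})$, the integrand is nonnegative and hence $F_{\epsilon}(v) = \|f^{\epsilon}(\cdot, Dv)\|_{L^{1}(Q\times\Omega)}$, which lets me invoke Corollary \ref{lem22} directly.

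First I would establish coercivity. The left-hand inequality of (\ref{lem15}) gives
\begin{equation*}
	F_{\epsilon}(v) = \|f^{\epsilon}(\cdot, Dv)\|_{L^{1}(Q\times\Omega)} \geq c_{1}\, \|Dv\|_{L^{\Phi}(Q\times\Omega)^{N}},
\end{equation*}
and $\|Dv\|_{L^{\Phi}(Q\times\Omega)^{N}}$ is equivalent to the norm of $W^{1}_{0}L^{\Phi}_{D_{x}}(Q ; L^{\Phi}(\Omega))$ (the Luxemburg and Orlicz norms being equivalent). Hence any minimizing sequence $(v_{n})$ for (\ref{lem26}) is bounded, and by reflexivity admits a subsequence, not relabelled, converging weakly to some $u_{\epsilon}$ in that space; in particular $Dv_{n} \rightharpoonup Du_{\epsilon}$ weakly in $L^{\Phi}(Q\times\Omega)^{N}$.

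The key step is weak lower semicontinuity, which I would obtain by factoring $F_{\epsilon} = \Psi \circ D$, where $D : W^{1}_{0}L^{\Phi}_{D_{x}}(Q ; L^{\Phi}(\Omega)) \to L^{\Phi}(Q\times\Omega)^{N}$ is the bounded linear gradient operator and $\Psi(\mathbf{g}) = \iint_{Q\times\Omega} f(T(\epsilon^{-1}x)\omega, \mathbf{g}(x,\omega))\,dx\,d\mu$. By $(H_{2})$ the integrand $f(\omega,\cdot)$ is convex, so $\Psi$ is a convex functional; by Proposition \ref{lem14} (the Lipschitz estimate underlying Corollary \ref{lem22}) $\Psi$ is finite and strongly continuous on all of $L^{\Phi}(Q\times\Omega)^{N}$. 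A convex, strongly continuous functional on a Banach space is weakly sequentially lower semicontinuous, since its sublevel sets are convex and strongly closed, hence weakly closed by Mazur's lemma. As $D$ is bounded and linear it is weak-to-weak continuous, so $F_{\epsilon} = \Psi \circ D$ is weakly lower semicontinuous. Combining this with $v_{n} \rightharpoonup u_{\epsilon}$ yields
\begin{equation*}
	F_{\epsilon}(u_{\epsilon}) \leq \liminf_{n} F_{\epsilon}(v_{n}) = \inf_{v \in W^{1}_{0}L^{\Phi}_{D_{x}}(Q ; L^{\Phi}(\Omega))} F_{\epsilon}(v),
\end{equation*}
so $u_{\epsilon}$ is a minimizer. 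Uniqueness then follows from the \emph{strict} convexity in $(H_{2})$: if $u_{\epsilon}^{(1)}$ and $u_{\epsilon}^{(2)}$ were minimizers with $Du_{\epsilon}^{(1)} \neq Du_{\epsilon}^{(2)}$ on a set of positive $dx\,d\mu$-measure, strict convexity of $f(\omega,\cdot)$ would force $F_{\epsilon}\big(\tfrac{1}{2}(u_{\epsilon}^{(1)}+u_{\epsilon}^{(2)})\big) < \tfrac{1}{2}\big(F_{\epsilon}(u_{\epsilon}^{(1)}) + F_{\epsilon}(u_{\epsilon}^{(2)})\big)$, contradicting minimality; hence $Du_{\epsilon}^{(1)} = Du_{\epsilon}^{(2)}$ a.e., and since the norm on the space is $\|D\cdot\|_{L^{\Phi}}$ the two functions coincide. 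I expect the main obstacle to be the careful justification of the weak lower semicontinuity in the Orlicz setting — in particular confirming that $\Psi$ is genuinely continuous on the whole of $L^{\Phi}(Q\times\Omega)^{N}$, not merely on the dense subspace of smooth functions — which is precisely what Proposition \ref{lem14} secures.
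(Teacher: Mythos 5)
Your proposal is correct and follows essentially the same route as the paper's proof: the paper likewise invokes Proposition \ref{lem14} (with $g=f$) to get the Lipschitz-type estimate giving strong continuity of $F_{\epsilon}$, cites the left-hand inequality of (\ref{lem15}) for coercivity and the strict convexity in $(H_{2})$ for uniqueness, and then concludes existence of the unique minimizer. You have simply made explicit the standard direct-method machinery (reflexivity for weak compactness of the minimizing sequence, Mazur's lemma turning convexity plus strong continuity into weak lower semicontinuity) that the paper compresses into its final sentence.
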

	\begin{proof}
		Let $\epsilon > 0$ be fixed. Thanks to Proposition \ref{lem14} (with $g=f$), there is a constant $c > 0$ such that 
		\begin{equation*}
			\begin{array}{rl}
				& \left| F_{\epsilon}(v) - F_{\epsilon}(w)  \right|\\
				\leq & c\, \left( \|1\|_{\widetilde{\Phi},Q\times\Omega} + \parallel \phi( 1+|Dv|+ |Dw|)\parallel_{\widetilde{\Phi},Q\times \Omega} \right)\parallel Dv-Dw  \parallel_{\left[L^{\Phi}(Q\times \Omega) \right]^{N}},
			\end{array}
		\end{equation*} 
		for all $v,w \in W^{1}_{0}L^{\Phi}_{D_{x}}(Q ; L^{\Phi}(\Omega))$, so that $F_{\epsilon}$ is continuous. With this in mind, since $F_{\epsilon}$ is strictly convex (see $(H_{2})$), and coercive (see the left-hand side inequality in (\ref{lem15})), there exists a unique $u_{\epsilon}$ that minimizes $F_{\epsilon}$ on $W^{1}_{0}L^{\Phi}_{D_{x}}(Q ; L^{\Phi}(\Omega))$.
	\end{proof} 
	
	\subsection{Preliminary results} 
	Let $\mathbf{v}  \in \left[\mathcal{C}^{\infty}(\Omega)\otimes \mathcal{C}(\overline{Q}, \mathbb{R}) \right]^{N}$.
	The function $\omega \rightarrow f(\omega, \mathbf{v}(x,\omega))$ of $\Omega$ into $\mathbb{R}$ denoted by $f(\cdot, \mathbf{v}(x,\cdot))$ belongs to $L^{\infty}(\Omega)$. This being, using Lemma \ref{lem9},  the function
	$(x, \omega) \rightarrow f(\omega, \mathbf{v}(x,\omega))$ of $\overline{Q}\times\Omega$ into $\mathbb{R}$ denoted by $f(\cdot, \mathbf{v})$ belongs to $\mathcal{C}(\overline{Q}; L^{\infty}(\Omega))$, with
	\begin{equation}\label{lem19}
		|f(\cdot, \mathbf{v}) - f(\cdot, \mathbf{w})| \leq c'_{2} \left(  1 + \dfrac{\Phi(2(1+ |\mathbf{v}| + |\mathbf{w}|))}{1+ |\mathbf{v}| + |\mathbf{w}|}\right) |\mathbf{v} - \mathbf{w} | \quad \textup{a.e. \, in} \, \overline{Q}\times \Omega 
	\end{equation}
	for all $\mathbf{v}, \mathbf{w} \in \left[\mathcal{C}^{\infty}(\Omega)\otimes \mathcal{C}(\overline{Q}, \mathbb{R}) \right]^{N}$. Therefore, for fixed $\epsilon >0$, one defines the function
	\begin{equation}\label{fepsilon1}
		(x, \omega) \rightarrow f(T(\epsilon^{-1}x)\omega, \mathbf{v}(x,T(\epsilon^{-1}x)\omega))\;\; \textup{of} \;\;Q\times\Omega\;\; \textup{into} \;\; \mathbb{R}\;\; \textup{denoted\; by}\; f^{\epsilon}(\cdot, \mathbf{v}^{\epsilon}),
	\end{equation}
	as an element of $L^{\infty}(Q\times \Omega, \mathbb{R})$.
	\begin{proposition}\label{lem23}
		Suppose that $(H_{1})-(H_{3})$ hold. For every $\mathbf{v} \in \left[\mathcal{C}^{\infty}(\Omega)\otimes \mathcal{C}(\overline{Q}, \mathbb{R}) \right]^{N}$ one has 
		\begin{equation}\label{lem17}
			\lim_{\epsilon \to 0} \iint_{Q\times\Omega} f(T(\epsilon^{-1}x)\omega, \mathbf{v}(x,T(\epsilon^{-1}x)\omega)) dxd\mu = \iint_{Q\times\Omega} f(\omega, \mathbf{v}(x,\omega)) dxd\mu.
		\end{equation}
		Futhermore, the mapping  $\mathbf{v} \rightarrow f(\cdot, \mathbf{v})$ of $\left[\mathcal{C}^{\infty}(\Omega)\otimes \mathcal{C}(\overline{Q}, \mathbb{R}) \right]^{N}$ into $L^{1}(Q\times \Omega, \mathbb{R})$ extends by continuity to a mapping, still denoted by $\mathbf{v} \rightarrow f(\cdot, \mathbf{v})$, of
		$\left[L^{\Phi}(Q\times \Omega, \mathbb{R}) \right]^{N}$ into $L^{1}(Q\times \Omega, \mathbb{R})$ with the property 
		\begin{equation*}
			\begin{array}{rl}
				& \parallel f(\cdot, \mathbf{v}) - f(\cdot, \mathbf{w}) \parallel_{L^{1}(Q\times \Omega)} \\
				\leq& c\, \left( \|1\|_{\widetilde{\Phi},Q\times\Omega} + \parallel \phi( 1+|\mathbf{v}|+ |\mathbf{w}|)\parallel_{\widetilde{\Phi},Q\times \Omega} \right)\parallel \mathbf{v}-\mathbf{w}  \parallel_{\left[L^{\Phi}(Q\times \Omega) \right]^{N}}
			\end{array}
		\end{equation*}
		for all $\mathbf{v}, \mathbf{w} \in \left[L^{\Phi}(Q\times \Omega, \mathbb{R}) \right]^{N}$.
	\end{proposition}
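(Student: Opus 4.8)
The plan is to treat the two assertions separately, beginning with the limit (\ref{lem17}) and then the Lipschitz extension. For (\ref{lem17}) I would set $G(x,\omega)=f(\omega,\mathbf{v}(x,\omega))$, so that the integrand in (\ref{lem17}) is exactly the realization $G(x,T(\epsilon^{-1}x)\omega)$. Since $G=f(\cdot,\mathbf{v})$ belongs to $\mathcal{C}(\overline{Q};L^{\infty}(\Omega))$, it is admissible and its realization lies in $L^{\infty}(Q\times\Omega)\subset L^{1}(Q\times\Omega)$ (here $\mu$ is a probability measure and $Q$ is bounded), so Fubini's theorem applies. For each fixed $x$ the map $\omega\mapsto G(x,T(\epsilon^{-1}x)\omega)$ equals $U(\epsilon^{-1}x)[G(x,\cdot)]$, and since $T(\epsilon^{-1}x)$ is $\mu$-measure preserving one has $\int_{\Omega}G(x,T(\epsilon^{-1}x)\omega)\,d\mu=\int_{\Omega}G(x,\omega)\,d\mu$ for every $x$ and every $\epsilon$. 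Integrating over $Q$ then yields, in fact, the exact equality
\begin{equation*}
	\iint_{Q\times\Omega} f(T(\epsilon^{-1}x)\omega,\mathbf{v}(x,T(\epsilon^{-1}x)\omega))\,dxd\mu=\iint_{Q\times\Omega} f(\omega,\mathbf{v}(x,\omega))\,dxd\mu
\end{equation*}
for every $\epsilon>0$; the limit (\ref{lem17}) is an immediate consequence. Thus this first part is essentially the $\mu$-invariance of $T$ and requires no passage to the limit.

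For the second assertion I would start from the pointwise estimate (\ref{lem19}) supplied by Lemma \ref{lem9}, which bounds $|f(\cdot,\mathbf{v})-f(\cdot,\mathbf{w})|$ by $c'_{2}\,\frac{1+\Phi(2(1+|\mathbf{v}|+|\mathbf{w}|))}{1+|\mathbf{v}|+|\mathbf{w}|}\,|\mathbf{v}-\mathbf{w}|$ a.e.\ on $\overline{Q}\times\Omega$. Writing $t=1+|\mathbf{v}|+|\mathbf{w}|\geq 1$ and using that $\phi$ is nondecreasing, the relation $\Phi(t)\leq t\phi(t)$ from (\ref{lem10}) and the $\Delta_{2}$-bound $\Phi(2t)\leq k\Phi(t)$, the factor $\frac{1+\Phi(2t)}{t}$ is controlled by $c\,(1+\phi(t))$; hence the integrand is dominated, up to a multiplicative constant, by $(1+\phi(1+|\mathbf{v}|+|\mathbf{w}|))\,|\mathbf{v}-\mathbf{w}|$. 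Integrating over $Q\times\Omega$ and applying the generalized H\"older inequality in Orlicz spaces to the pair consisting of $1+\phi(1+|\mathbf{v}|+|\mathbf{w}|)\in L^{\widetilde{\Phi}}(Q\times\Omega)$ and $|\mathbf{v}-\mathbf{w}|\in L^{\Phi}(Q\times\Omega)$ produces the announced bound. The membership $\phi(1+|\mathbf{v}|+|\mathbf{w}|)\in L^{\widetilde{\Phi}}(Q\times\Omega)$ is where the Young-function machinery is essential: it follows from $\widetilde{\Phi}(\phi(t))\leq k\,\Phi(t)$ (see (\ref{lem11})) once $1+|\mathbf{v}|+|\mathbf{w}|\in L^{\Phi}(Q\times\Omega)$, which holds because $\Phi\in\Delta_{2}$.

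Finally, to pass from $[\mathcal{C}^{\infty}(\Omega)\otimes\mathcal{C}(\overline{Q},\mathbb{R})]^{N}$ to all of $[L^{\Phi}(Q\times\Omega,\mathbb{R})]^{N}$, I would invoke the density of $\mathcal{C}^{\infty}(\Omega)$ in $L^{\Phi}(\Omega)$ (valid since $\Phi\in\Delta_{2}$) and a Nemytskii-operator continuity argument: given $\mathbf{v}\in[L^{\Phi}(Q\times\Omega)]^{N}$ and an approximating sequence $\mathbf{v}_{n}$ from the dense subspace, the sequences $(\mathbf{v}_{n})$ are bounded, so the factors $\|\phi(1+|\mathbf{v}_{n}|+|\mathbf{v}_{m}|)\|_{\widetilde{\Phi}}$ stay bounded, and the estimate forces $(f(\cdot,\mathbf{v}_{n}))$ to be Cauchy in $L^{1}(Q\times\Omega)$; its limit defines $f(\cdot,\mathbf{v})$ independently of the chosen sequence, and the estimate passes to the limit. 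This mirrors the proof of Proposition \ref{lem14}. The main obstacle is not the density step itself but the control of the nonlinear factor: one must ensure that along bounded sequences in $[L^{\Phi}]^{N}$ the quantities $\|\phi(1+|\mathbf{v}_{n}|+|\mathbf{v}_{m}|)\|_{\widetilde{\Phi}}$ remain uniformly bounded, which is exactly where the $\Delta_{2}$ condition on $\Phi$ (and, for the sharper integrability conclusions, the $\Delta'$ condition) enters decisively.
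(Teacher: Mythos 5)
Your proposal is correct, and for the second assertion it takes essentially the paper's route: the paper likewise obtains the $L^{1}$ estimate on the dense class from Lemma \ref{lem9} together with the Young-function inequalities (\ref{lem10}) and (\ref{lem11}) and the generalized H\"older inequality (this is what ``arguing as in the proof of Proposition \ref{lem14}'' amounts to), and then finishes ``by routine argument of continuity and density'', which is precisely your Cauchy-sequence scheme. The genuine difference is in the first assertion. The paper deduces (\ref{lem17}) from the two-scale framework: the constant sequence $u_{\epsilon}\equiv 1$ weakly stochastically two-scale converges in the mean to $1$, and Remark \ref{lem16} ($R_{1}$) permits the test function $f(\cdot,\mathbf{v})\in\mathcal{C}(\overline{Q};L^{\infty}(\Omega))$. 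You instead perform, for each fixed $x$, the measure-preserving change of variables $\omega\mapsto T(\epsilon^{-1}x)\omega$ and integrate over $Q$ by Fubini, obtaining the \emph{exact} equality of the two sides for every $\epsilon>0$, not merely in the limit. Both arguments ultimately rest on the invariance of $\mu$ (that invariance is also why $u_{\epsilon}\equiv 1$ is two-scale convergent), but yours is more elementary, bypasses the two-scale machinery entirely, and yields a sharper conclusion; the paper's formulation has the advantage of being the pattern that survives when the constant sequence is replaced by a genuinely oscillating one, which is how the same remark is reused later (e.g. for the term involving $\partial f_{n}/\partial\lambda$ in the liminf proposition of the regularization subsection). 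One small caveat in your density step: to pass the estimate itself to the limit you need the factors $\|\phi(1+|\mathbf{v}_{n}|+|\mathbf{w}_{n}|)\|_{\widetilde{\Phi},Q\times\Omega}$ to behave (a Nemytskii-continuity point), or alternatively one can define $f(\cdot,\mathbf{v})$ pointwise a.e.\ for general $\mathbf{v}$ using that $f$ is a Carath\'eodory integrand, after which the inequality follows directly by integrating (\ref{lem7}); this is routine under $\Delta_{2}$ and is exactly the detail the paper also leaves implicit.
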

	\begin{proof}
		Let	$\mathbf{v} \in \left[\mathcal{C}^{\infty}(\Omega)\otimes \mathcal{C}(\overline{Q}, \mathbb{R}) \right]^{N}$. Since $f(\cdot, \mathbf{v})$ belongs to $\mathcal{C}(\overline{Q}; L^{\infty}(\Omega))$ and the sequence $(u_{\epsilon})$ defined by $u_{\epsilon}(x,\omega) =1$, a.e. $x\in Q$ and $\omega \in \Omega$, is weakly 2s-convergent in the mean in $L^{\Phi}(Q\times \Omega)$ to 1, the convergence result (\ref{lem17}) is a consequence of Remark \ref{lem16} ($R_{1}$). \\
		On the other hand, since $\Phi \in \Delta_{2}$, arguing as in the proof of Proposition \ref{lem14}, there is a constant $c= c(c_{2},Q,\Omega,\Phi)$ such that  
		\begin{equation*}
			\begin{array}{rl}
				&
				\parallel f(\cdot, \mathbf{v}) - f(\cdot, \mathbf{w}) \parallel_{L^{1}(Q\times \Omega)}\\
				\leq& c\, \left( \|1\|_{\widetilde{\Phi},Q\times\Omega} + \parallel \phi( 1+|\mathbf{v}|+ |\mathbf{w}|)\parallel_{\widetilde{\Phi},Q\times \Omega} \right)\parallel \mathbf{v}-\mathbf{w}  \parallel_{\left[L^{\Phi}(Q\times \Omega) \right]^{N}}
			\end{array}
		\end{equation*}
		for all $\mathbf{v}, \mathbf{w} \in \left[\mathcal{C}^{\infty}(\Omega)\otimes \mathcal{C}(\overline{Q}, \mathbb{R}) \right]^{N}$. We end the proof by routine argument of continuity and density.
	\end{proof}
	\begin{corollary}\label{lem24}
		Let 
		\begin{equation*}
			\phi_{\epsilon}(x,\omega) = \psi_{0}(x,\omega) + \epsilon \psi_{1}(x,T(\epsilon^{-1}x)\omega),
		\end{equation*}
		with $\epsilon>0$, $(x,\omega) \in Q\times \Omega$, $\psi_{0} \in \mathcal{C}^{\infty}_{0}(Q)\otimes I^{\Phi}_{nv}(\Omega)$ and $\psi_{1} \in \mathcal{C}^{\infty}_{0}(Q)\otimes \mathcal{C}^{\infty}(\Omega)$. Then 
		\begin{equation*}
			\begin{array}{rl}
				&
				\displaystyle\lim_{\epsilon \to 0} \displaystyle\iint_{Q\times\Omega} f(T(\epsilon^{-1}x)\omega, D\phi_{\epsilon}(x,\omega)) dxd\mu \\
				= & \displaystyle\iint_{Q\times\Omega} f(\omega, D_{x}\psi_{0}(x,\omega)+ D_{\omega}\psi_{1}(x,\omega)) dxd\mu.
			\end{array}
		\end{equation*}
	\end{corollary}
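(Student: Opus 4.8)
The plan is to compute the gradient of $\phi_\epsilon$ explicitly, isolate the single term that carries the two-scale limit, and absorb the rest into a vanishing remainder controlled by the Lipschitz-type bound of Lemma~\ref{lem9}. Writing $\psi_1=\sum_k\varphi_k\otimes\theta_k$ with $\varphi_k\in\mathcal{C}^\infty_0(Q)$ and $\theta_k\in\mathcal{C}^\infty(\Omega)$, the chain rule together with the realization rule (\ref{lem18}) (which differentiates $x\mapsto\theta_k(T(\epsilon^{-1}x)\omega)$ and produces a factor $\epsilon^{-1}$ from the inner scale) gives
\begin{equation*}
D\phi_\epsilon(x,\omega)=D_x\psi_0(x,\omega)+(D_\omega\psi_1)(x,T(\epsilon^{-1}x)\omega)+\epsilon\,(D_x\psi_1)(x,T(\epsilon^{-1}x)\omega),
\end{equation*}
the prefactor $\epsilon$ cancelling the $\epsilon^{-1}$ so that the stochastic gradient $D_\omega\psi_1$ survives at order one while the $x$-derivative of $\psi_1$ is pushed to order $\epsilon$. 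Since the $\Omega$-factors of $\psi_0$ lie in $I^{\Phi}_{nv}(\Omega)$, they are invariant, hence $D_x\psi_0(x,T(\epsilon^{-1}x)\omega)=D_x\psi_0(x,\omega)$; this lets me set $\mathbf{w}=D_x\psi_0+D_\omega\psi_1$ and read the first two terms as the single realization $\mathbf{w}(x,T(\epsilon^{-1}x)\omega)$.

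Next I would split
\begin{equation*}
\iint_{Q\times\Omega}f(T(\epsilon^{-1}x)\omega,D\phi_\epsilon)\,dxd\mu=\iint_{Q\times\Omega}f(T(\epsilon^{-1}x)\omega,\mathbf{w}(x,T(\epsilon^{-1}x)\omega))\,dxd\mu+R_\epsilon,
\end{equation*}
where $R_\epsilon$ is the integral of the difference $f(T(\epsilon^{-1}x)\omega,D\phi_\epsilon)-f(T(\epsilon^{-1}x)\omega,\mathbf{w}(x,T(\epsilon^{-1}x)\omega))$. Applying the continuity estimate (\ref{lem19}) (equivalently (\ref{lem7})) with $\lambda=D\phi_\epsilon$, $\nu=\mathbf{w}(x,T(\epsilon^{-1}x)\omega)$ and $\lambda-\nu=\epsilon(D_x\psi_1)(x,T(\epsilon^{-1}x)\omega)$ yields a pointwise bound of the form $c'_2\bigl(1+\tfrac{\Phi(2(1+|\lambda|+|\nu|))}{1+|\lambda|+|\nu|}\bigr)|\lambda-\nu|$. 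Because $\psi_0,\psi_1$ and all their relevant derivatives are bounded (compact $x$-support, and smoothness in $\omega$ meaning bounded stochastic derivatives of every order), the quantities $|\lambda|,|\nu|$ and the fraction stay bounded uniformly in $\epsilon$, so $|R_\epsilon|\le C\epsilon\to 0$.

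For the surviving term I would invoke Proposition~\ref{lem23}, i.e. the convergence (\ref{lem17}), with the test field $\mathbf{v}=\mathbf{w}=D_x\psi_0+D_\omega\psi_1$, to obtain
\begin{equation*}
\lim_{\epsilon\to0}\iint_{Q\times\Omega}f(T(\epsilon^{-1}x)\omega,\mathbf{w}(x,T(\epsilon^{-1}x)\omega))\,dxd\mu=\iint_{Q\times\Omega}f(\omega,D_x\psi_0(x,\omega)+D_\omega\psi_1(x,\omega))\,dxd\mu.
\end{equation*}
Combining this with $R_\epsilon\to0$ gives precisely the asserted limit.

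The step I expect to be the main obstacle is the legitimacy of applying Proposition~\ref{lem23} (and of the remainder bound) to $\mathbf{w}$: both implicitly require $\mathbf{w}\in[\mathcal{C}^{\infty}(\Omega)\otimes \mathcal{C}(\overline{Q}, \mathbb{R})]^{N}$, that is, the invariant $\Omega$-factors of $\psi_0$ to be bounded and smooth, whereas a priori they sit only in $I^{\Phi}_{nv}(\Omega)\subset L^\Phi(\Omega)$. The part $D_\omega\psi_1$ is harmless, but $D_x\psi_0$ inherits the regularity of those factors. I would remove this gap by density: truncating an invariant function preserves invariance and, since $\Phi\in\Delta_2$, the truncations converge to it in $L^\Phi(\Omega)$ while being bounded, hence smooth in the sense of $\mathcal{C}^\infty(\Omega)$. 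Proving the statement for the truncated test functions and passing to the limit is then justified uniformly in $\epsilon$ by the Lipschitz continuity of $\mathbf{v}\mapsto f(\cdot,\mathbf{v})$ from $[L^\Phi(Q\times\Omega)]^N$ into $L^1(Q\times\Omega)$ (Proposition~\ref{lem23}) together with the norm invariance of the realization recorded in Lemma~\ref{lem2}.
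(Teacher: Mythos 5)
Your proof is correct and follows essentially the same route as the paper's: the same chain-rule decomposition $D\phi_\epsilon(x,\omega)=D_x\psi_0(x,\omega)+\epsilon\,(D_x\psi_1)(x,T(\epsilon^{-1}x)\omega)+(D_\omega\psi_1)(x,T(\epsilon^{-1}x)\omega)$, the same use of the continuity estimate (\ref{lem19}) to absorb the term of order $\epsilon$ into a vanishing remainder, and the same application of Proposition \ref{lem23}, i.e. of (\ref{lem17}), with $\mathbf{v}=D_x\psi_0+D_\omega\psi_1$ for the surviving term. There are, however, two points where you are more careful than the paper. First, you state the decomposition with the correct evaluation point $(D_\omega\psi_1)(x,T(\epsilon^{-1}x)\omega)$ and make explicit the invariance identity $D_x\psi_0(x,T(\epsilon^{-1}x)\omega)=D_x\psi_0(x,\omega)$ that is needed to read the first two terms as the realization of a single field; the paper uses this silently (its displayed decomposition even carries a typo at exactly this spot). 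Second, the ``obstacle'' you flag is genuine: the paper simply asserts that $D_x\psi_0$ belongs to $\left[\mathcal{C}^{\infty}(\Omega)\otimes\mathcal{C}(\overline{Q},\mathbb{R})\right]^{N}$, whereas the invariant $\Omega$-factors of $\psi_0$ are a priori only in $I_{nv}^{\Phi}(\Omega)\subset L^{\Phi}(\Omega)$ and need not be bounded (no ergodicity is assumed), so both the membership $f(\cdot,D_x\psi_0+D_\omega\psi_1)\in\mathcal{C}(\overline{Q};L^{\infty}(\Omega))$ and the constant $\phi(\|D_x\psi_0\|_{\infty})$ in the paper's remainder bound are unjustified as written. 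Your truncation-plus-density repair closes this gap soundly: truncation preserves invariance, bounded invariant functions lie in $\mathcal{C}^{\infty}(\Omega)$ since all their stochastic derivatives vanish, the $\Delta_2$-condition yields $L^{\Phi}$-convergence of the truncations, and the passage to the limit uniformly in $\epsilon$ follows from the $L^1$-Lipschitz estimate of Proposition \ref{lem23} combined with the measure-preserving argument underlying Lemma \ref{lem2}. So your write-up is not a different method, but it supplies a justification the paper's own proof lacks.
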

	\begin{proof}
		As $D\psi_{1}(x,T(\epsilon^{-1}x)\omega) = D_{x}\psi_{1}(x,T(\epsilon^{-1}x)\omega) + \frac{1}{\epsilon}D_{\omega}\psi_{1}(x,T(\epsilon^{-1}x)\omega)$ [see (\ref{lem18})], we have   $D\phi_{\epsilon}(x,\omega) = D_{x}\psi_{0}(x,\omega)+ \epsilon D_{x}\psi_{1}(x,T(\epsilon^{-1}x)\omega) + D_{\omega}\psi_{1}(x,\omega)$. Recalling that functions $D_{x}\psi_{0}, D_{x}\psi_{1}$ and $ D_{\omega}\psi_{1}$ belongs to $\left[\mathcal{C}^{\infty}(\Omega)\otimes \mathcal{C}(\overline{Q}, \mathbb{R}) \right]^{N}$, the function $f(\cdot, D_{x}\psi_{0}+ D_{\omega}\psi_{1}) $ belongs to $\mathcal{C}(\overline{Q}; L^{\infty}(\Omega))$, so that (\ref{lem17}) implies, using notation (\ref{fepsilon1}): 
		\begin{equation}\label{lem21}
			\begin{array}{rl}
				&
				\displaystyle	\lim_{\epsilon \to 0} \displaystyle \iint_{Q\times\Omega} f^{\epsilon}(\cdot, D\psi_{0}(x,\omega)+D_{\omega}\psi_{1}(x, T(\epsilon^{-1}x)\omega)) dxd\mu\\
				= & \displaystyle \iint_{Q\times\Omega} f(\omega, D_{x}\psi_{0}(x,\omega)+ D_{\omega}\psi_{1}(x,\omega)) dxd\mu.
			\end{array}
		\end{equation}
		On the other hand, since $D\phi_{\epsilon}(x,\omega) = D_{x}\psi_{0}(x,\omega)+ \epsilon D_{x}\psi_{1}(x,T(\epsilon^{-1}x)\omega) + D_{\omega}\psi_{1}(x,\omega)$, it follows from (\ref{lem19})
		\begin{equation*}
			|f^{\epsilon}(\cdot, D\phi_{\epsilon}) - f^{\epsilon}(\cdot, D_{x}\psi_{0}+D_{\omega}\psi_{1}^{\epsilon}) | \leq c\epsilon \quad \textup{in} \; Q\times\Omega, \; \epsilon >0,
		\end{equation*} 
		where $c = c\left( \|D_{x}\psi_{1}\|_{\infty}, \phi(\|D_{x}\psi_{0}\|_{\infty}), \phi(\|D_{\omega}\psi_{1}\|_{\infty})  \right) >0$. Hence 
		\begin{equation}\label{lem20}
			f^{\epsilon}(\cdot, D\phi_{\epsilon}) - f^{\epsilon}(\cdot, D_{x}\psi_{0}+D_{\omega}\psi_{1}^{\epsilon}) \rightarrow 0 \quad \textup{in} \; L^{1}(Q\times \Omega) \; \textup{as} \; \epsilon \to 0.
		\end{equation}
		The proof is completed by combining (\ref{lem21})-(\ref{lem20}) with the decomposition 
		\begin{equation*}
			\begin{array}{l}
				\displaystyle\iint_{Q\times\Omega}  f^{\epsilon}(\cdot, D\phi_{\epsilon})dxd\mu - \displaystyle\iint_{Q\times\Omega} f(\cdot, D_{x}\psi_{0}+ D_{\omega}\psi_{1})dxd\mu \\ 
				=  \displaystyle\iint_{Q\times\Omega} \left[  f^{\epsilon}(\cdot, D\phi_{\epsilon}) -  f^{\epsilon}(\cdot, D_{x}\psi_{0}+ D_{\omega}\psi_{1}^{\epsilon}) 
				\right] dxd\mu \\
				\;\;\;\; + \displaystyle \iint_{Q\times\Omega} \left[ f^{\epsilon}(\cdot, D_{x}\psi_{0}+ D_{\omega}\psi_{1}^{\epsilon}) - f(\cdot, D_{x}\psi_{0}+ D_{\omega}\psi_{1}) \right] dxd\mu. 
			\end{array}
		\end{equation*}
	\end{proof}
	
	Let  
	\begin{equation}\label{banach1}
		\mathbb{F}_{0}^{1}L^{\Phi} = W^{1}_{0}L^{\Phi}_{D_{x}}(Q ; I_{nv}^{\Phi}(\Omega))\times L^{\Phi}(Q; W^{1}_{\#}L^{\Phi}(\Omega)). 
	\end{equation}
	We equip $\mathbb{F}_{0}^{1}L^{\Phi}$ with the norm 
	\begin{equation*}
		\parallel \mathbf{u} \parallel_{\mathbb{F}_{0}^{1}L^{\Phi}} = \parallel Du_{0} \parallel_{L^{\Phi}(Q\times\Omega)^{N}} + \parallel \overline{D}_{\omega}u_{1} \parallel_{L^{\Phi}(Q\times\Omega)^{N}},\;\; \textup{with} \;\, \mathbf{u} = (u_{0}, u_{1}) \in \mathbb{F}_{0}^{1}L^{\Phi}.
	\end{equation*} 
	With this norm, $\mathbb{F}_{0}^{1}L^{\Phi}$ is a Banach space admitting,
	\begin{equation}\label{banach2}
		F^{\infty}_{0} = \left[  \mathcal{C}^{\infty}_{0}(Q)\otimes I_{nv}^{\Phi}(\Omega) \right] \times \left[ \mathcal{C}^{\infty}_{0}(Q)\otimes I_{\Phi}(\mathcal{C}^{\infty}(\Omega)) \right],	
	\end{equation}
	as a dense subspace, where $I_{\Phi}$ denotes the canonical mapping of $\mathcal{C}^{\infty}(\Omega)$ into the completion $W^{1}_{\#}L^{\Phi}(\Omega)$. \\
	Let now $\mathbf{v} = (v_{0}, v_{1}) \in \mathbb{F}_{0}^{1}L^{\Phi}$, set $\mathbb{D}\mathbf{v} = Dv_{0} + \overline{D}_{\omega}v_{1} \in L^{\Phi}(Q\times\Omega)^{N}$ and define the functional $F$ on $\mathbb{F}_{0}^{1}L^{\Phi}$ by 
	\begin{equation*}
		F(v) = \iint_{Q\times\Omega} f(\cdot, \mathbb{D}\mathbf{v})\, dxd\mu,
	\end{equation*}
	where the function $f$ here is defined as in Corollary \ref{lem22}. \\
	The hypotheses $(H_{1})-(H_{3})$ drive to the following lemma.
	\begin{lemma}
		There exists a unique $\mathbf{u} = (u_{0}, u_{1}) \in \mathbb{F}_{0}^{1}L^{\Phi}$ such that 
		\begin{equation}\label{lem28}
			F(\mathbf{u}) = \left\{  \min F(\mathbf{v})\, : \, \mathbf{v} \in \mathbb{F}_{0}^{1}L^{\Phi} \right\}.
		\end{equation}
	\end{lemma}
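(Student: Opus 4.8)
The plan is to establish \eqref{lem28} by the direct method of the calculus of variations on the reflexive Banach space $\mathbb{F}_{0}^{1}L^{\Phi}$, exploiting that $F$ is strictly convex, strongly continuous (hence weakly lower semicontinuous) and coercive. Reflexivity of $\mathbb{F}_{0}^{1}L^{\Phi}$ is immediate, since it is the product of the two reflexive spaces $W^{1}_{0}L^{\Phi}_{D_{x}}(Q;I_{nv}^{\Phi}(\Omega))$ and $L^{\Phi}(Q;W^{1}_{\#}L^{\Phi}(\Omega))$ introduced in Section~\ref{sect2}.

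First I would record the structural facts. The operator $\mathbf{v}=(v_{0},v_{1})\mapsto \mathbb{D}\mathbf{v}=Dv_{0}+\overline{D}_{\omega}v_{1}$ is linear, and by the very definition of $\|\cdot\|_{\mathbb{F}_{0}^{1}L^{\Phi}}$ it is bounded from $\mathbb{F}_{0}^{1}L^{\Phi}$ into $L^{\Phi}(Q\times\Omega)^{N}$. Composing it with the Nemytskii-type map $w\mapsto \iint_{Q\times\Omega}f(\cdot,w)\,dxd\mu$, whose Lipschitz continuity from $L^{\Phi}(Q\times\Omega)^{N}$ into $L^{1}(Q\times\Omega)$ was obtained in Corollary~\ref{lem22} and Proposition~\ref{lem23}, shows that $F$ is finite and strongly continuous on $\mathbb{F}_{0}^{1}L^{\Phi}$. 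Since a convex functional that is strongly continuous is weakly lower semicontinuous, it remains to check convexity and coercivity.

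Convexity of $F$ follows from linearity of $\mathbb{D}$ together with the convexity of $f(\omega,\cdot)$ in $(H_{2})$; to upgrade it to \emph{strict} convexity I would show that $\mathbb{D}$ is injective. Here I would use the orthogonality between invariant fields and stochastic gradients: by \eqref{deri2}, for any $\psi\in\mathcal{C}^{\infty}(\Omega)$ and any invariant $h$ one has $\int_{\Omega}(D_{i,\Phi}\psi)\,h\,d\mu=-\int_{\Omega}\psi\,D_{i,\infty}h\,d\mu=0$, so after passing to the completion $W^{1}_{\#}L^{\Phi}(\Omega)$ the stochastic gradient $\overline{D}_{\omega}v_{1}$ is annihilated by the (bounded) conditional-expectation projection $P$ onto $I_{nv}^{\Phi}(\Omega)$, while $Dv_{0}$ is left fixed because $v_{0}(x,\cdot)$ is invariant. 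Consequently $P(\mathbb{D}\mathbf{v})=Dv_{0}$, so $\mathbb{D}\mathbf{v}=0$ forces $Dv_{0}=0$ and then $\overline{D}_{\omega}v_{1}=0$, i.e. $\mathbf{v}=0$ in $\mathbb{F}_{0}^{1}L^{\Phi}$ (this is the uniqueness of the decomposition underlying Lemma~\ref{lem5} and Theorem~\ref{lem29}). Strict convexity of $F$ is then inherited from that of $f(\omega,\cdot)$.

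The hard part will be coercivity, which amounts to a closed-range estimate $\|\mathbb{D}\mathbf{v}\|_{L^{\Phi}(Q\times\Omega)^{N}}\geq c\,\|\mathbf{v}\|_{\mathbb{F}_{0}^{1}L^{\Phi}}$ — the Orlicz counterpart of the Weyl orthogonal decomposition used in the $L^{2}$ theory of \cite{bourgeat,sango}. I would derive it from the same projection $P$: its boundedness on $L^{\Phi}$ gives $\|Dv_{0}\|=\|P(\mathbb{D}\mathbf{v})\|\leq C\,\|\mathbb{D}\mathbf{v}\|$, whence $\|\overline{D}_{\omega}v_{1}\|\leq(1+C)\,\|\mathbb{D}\mathbf{v}\|$, so that $\|\mathbf{v}\|_{\mathbb{F}_{0}^{1}L^{\Phi}}\leq C'\,\|\mathbb{D}\mathbf{v}\|_{L^{\Phi}(Q\times\Omega)^{N}}$. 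Combining this with the lower bound in $(H_{3})$, namely $F(\mathbf{v})\geq c_{1}\iint_{Q\times\Omega}\Phi(|\mathbb{D}\mathbf{v}|)\,dxd\mu$, and the modular–norm comparison valid under the $\Delta_{2}$-condition (cf. \eqref{be1}), I conclude that $F(\mathbf{v})\to+\infty$ as $\|\mathbf{v}\|_{\mathbb{F}_{0}^{1}L^{\Phi}}\to\infty$. With continuity, weak lower semicontinuity, coercivity and reflexivity in hand, a minimizing sequence is bounded, admits a weakly convergent subsequence whose limit minimizes $F$, and strict convexity makes this minimizer unique, establishing \eqref{lem28}.
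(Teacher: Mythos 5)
Your proposal is correct in substance, but note that the paper offers \emph{no} proof of this lemma at all: it is stated bare, prefaced only by ``The hypotheses $(H_{1})$--$(H_{3})$ drive to the following lemma,'' so the only thing to compare against is the template the paper uses for the analogous existence result for $F_{\epsilon}$, namely continuity (via Proposition~\ref{lem14}) plus strict convexity plus coercivity. You follow exactly that template, and the genuinely valuable part of your argument is the ingredient the paper leaves completely implicit: why $\mathbb{D}\mathbf{v}=Dv_{0}+\overline{D}_{\omega}v_{1}$ is injective and why $\|\mathbb{D}\mathbf{v}\|_{L^{\Phi}(Q\times\Omega)^{N}}$ controls the full norm $\|Dv_{0}\|+\|\overline{D}_{\omega}v_{1}\|$ of $\mathbb{F}_{0}^{1}L^{\Phi}$. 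Your projection argument is sound: the conditional expectation $P$ onto the invariant $\sigma$-algebra is a contraction on $L^{\Phi}$ by Jensen's inequality, it fixes $Dv_{0}(x,\cdot)$ because $v_{0}(x,\cdot)\in I_{nv}^{\Phi}(\Omega)$, and it kills $\overline{D}_{\omega}v_{1}$ --- either by your pairing computation (legitimate, since any invariant $h$ satisfies $J_{\delta}h=h\in\mathcal{C}^{\infty}(\Omega)$ with $D_{i,\infty}h=0$, so \eqref{deri2} applies) or more directly from $P\circ U(x)=P$. This yields both strict convexity of $F$ (through injectivity of $\mathbb{D}$) and, combined with $(H_{3})$ and the elementary bound ``modular $\geq$ Luxemburg norm $-\,1$'' (which needs no $\Delta_{2}$, so your reference to \eqref{be1} is unnecessary), the coercivity. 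Two small glosses you should tighten: reflexivity of $\mathbb{F}_{0}^{1}L^{\Phi}$ is not literally ``immediate as a product of reflexive spaces,'' because the paper's norm on $\mathbb{F}_{0}^{1}L^{\Phi}$ is not the natural product of the two factor norms; it is cleaner to observe that $\mathbf{v}\mapsto(Dv_{0},\overline{D}_{\omega}v_{1})$ is an isometry of $\mathbb{F}_{0}^{1}L^{\Phi}$ onto a complete (hence closed) subspace of the reflexive space $L^{\Phi}(Q\times\Omega)^{2N}$, and closed subspaces of reflexive spaces are reflexive. Also, your identification of the range of $P$ with $I_{nv}^{\Phi}(\Omega)$ (invariant functions $=$ functions measurable for the invariant $\sigma$-algebra, up to null sets) deserves a sentence, since the paper never introduces conditional expectations; it is standard ergodic-theoretic material but it is the one piece of machinery imported from outside the paper.
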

	
	\subsection{Regularization} 
	
	As in \cite{tacha1}, we regularize the integrand $f$ in order to get an approximating family of integrands $(f_{n})_{n\in\mathbb{N}^{\ast}}$ satisfying, in particular, properties analogous to $(H_{1})-(H_{3})$. Precisely, let $\theta_{n} \in \mathcal{C}^{\infty}_{0}(\mathbb{R}^{N})$ with $0\leq \theta_{n}$, $\textup{supp}\theta_{n} \subset \frac{1}{n}\overline{B_{N}}$ (where $\overline{B_{N}}$ denotes the closure of the open unit ball $B_{N}$ in $\mathbb{R}^{N}$) and $\int \theta_{n}(\eta)d\eta = 1$. Setting 
	\begin{equation*}
		f_{n}(\omega, \lambda) = \int \theta_{n}(\eta)f(\omega, \lambda-\eta)d\eta, \quad (\omega,\lambda) \in \Omega\times \mathbb{R}^{N}.
	\end{equation*}
	The main properties of this new integrand are the following:
	\begin{itemize}
		\item[$(H_{1})_{n}$] $f_{n}(\cdot, \lambda)$ is measurable for all $\lambda \in \mathbb{R}^{N}$ ;
		\item[$(H_{2})_{n}$] $f_{n}(\omega,\cdot)$ is strictly convex for almost all $\omega \in \Omega$ ;
		\item[$(H_{3})_{n}$] There is a constant $c_{5} > 0$ such that 
		\begin{equation*}
			f_{n}(\omega,\lambda) \leq c_{5}(1+ \Phi(|\lambda|))
		\end{equation*}
		for all $\lambda \in \mathbb{R}^{N}$ and for almost all $\omega \in \Omega$ ;
		\item[$(H_{4})_{n}$] $\dfrac{\partial f_{n}}{\partial \lambda}(\omega,\lambda)$ exists for all $\lambda \in \mathbb{R}^{N}$ and for almost all $\omega\in \Omega$, and there exists a constant $c_{6}=c_{6}(n) >0$ such that 
		\begin{equation*}
			\left| \dfrac{\partial f_{n}}{\partial \lambda}(\omega,\lambda) \right| \leq c_{6} (1+\Phi(|\lambda|)).
		\end{equation*}
	\end{itemize}
	This being so, we obtain the results in Proposition \ref{lem23} and in Corollary \ref{lem24}, where $f$ is replaced by $f_{n}$, and we have the following lemma.
	\begin{lemma}
		For every $\mathbf{v} \in \left[L^{\Phi}(Q\times\Omega,\mathbb{R})\right]^{N}$, as $n\to \infty$, one has
		\begin{equation*}
			f_{n}(\cdot,\mathbf{v}) \rightarrow f(\cdot,\mathbf{v}) \quad \textup{in} \; L^{1}(Q\times\Omega).
		\end{equation*}
	\end{lemma}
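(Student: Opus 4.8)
The plan is to reduce everything to the pointwise continuity estimate (\ref{lem7}) of Lemma \ref{lem9} and to absorb the mollification error into the factor coming from $\textup{supp}\,\theta_{n}\subset\frac1n\overline{B_{N}}$. First I would record that, since $f$ and each $f_{n}$ satisfy the hypotheses of Proposition \ref{lem23} (with $(H_{1})_{n}$--$(H_{3})_{n}$ in place of $(H_{1})$--$(H_{3})$), the maps $\mathbf{v}\mapsto f(\cdot,\mathbf{v})$ and $\mathbf{v}\mapsto f_{n}(\cdot,\mathbf{v})$ are well defined and continuous from $[L^{\Phi}(Q\times\Omega)]^{N}$ into $L^{1}(Q\times\Omega)$, and that for every $\mathbf{v}\in[L^{\Phi}(Q\times\Omega)]^{N}$ these extensions coincide a.e.\ in $Q\times\Omega$ with the genuine compositions $(x,\omega)\mapsto f(\omega,\mathbf{v}(x,\omega))$ and $(x,\omega)\mapsto f_{n}(\omega,\mathbf{v}(x,\omega))$. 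This identification follows by taking smooth $\mathbf{w}_{k}\to\mathbf{v}$ in $[L^{\Phi}]^{N}$, extracting an a.e.\ convergent subsequence, and combining the continuity of $f(\omega,\cdot)$, $f_{n}(\omega,\cdot)$ in $\lambda$ with the $L^{1}$-convergence of the extensions.

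Granting this, the heart of the argument is an elementary pointwise bound that uses only the modulus of continuity of $f$. Evaluating the definition of $f_{n}$ at $\lambda=\mathbf{v}(x,\omega)$ and using $\theta_{n}\geq0$, $\int\theta_{n}=1$ gives
\begin{equation*}
	|f_{n}(\omega,\mathbf{v})-f(\omega,\mathbf{v})|\leq\int\theta_{n}(\eta)\,|f(\omega,\mathbf{v}-\eta)-f(\omega,\mathbf{v})|\,d\eta .
\end{equation*}
Since $f$ obeys $(H_{1})$--$(H_{3})$ and $f\geq0$ forces $|f|=f\leq c_{2}(1+\Phi(|\lambda|))$, Lemma \ref{lem9} applies with $g=f$ and $c_{0}=c_{2}$; because $\textup{supp}\,\theta_{n}\subset\frac1n\overline{B_{N}}$, only values $|\eta|\leq 1/n\leq 1$ occur, so (\ref{lem7}) and the monotonicity of $\Phi$ yield
\begin{equation*}
	|f(\omega,\mathbf{v}-\eta)-f(\omega,\mathbf{v})|\leq \frac{c'_{0}}{n}\bigl(1+\Phi(4+4|\mathbf{v}|)\bigr),\qquad |\eta|\leq\tfrac1n .
\end{equation*}
Integrating the resulting bound $|f_{n}(\omega,\mathbf{v})-f(\omega,\mathbf{v})|\leq \frac{c'_{0}}{n}(1+\Phi(4+4|\mathbf{v}|))$ over $Q\times\Omega$ gives
\begin{equation*}
	\|f_{n}(\cdot,\mathbf{v})-f(\cdot,\mathbf{v})\|_{L^{1}(Q\times\Omega)}\leq\frac{c'_{0}}{n}\iint_{Q\times\Omega}\bigl(1+\Phi(4+4|\mathbf{v}|)\bigr)\,dx\,d\mu .
\end{equation*}

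Finally I would check that the integral on the right is finite and independent of $n$: as $Q$ is bounded and $\mu$ is a probability measure, $Q\times\Omega$ has finite measure, while $\Phi\in\Delta_{2}$ gives $\Phi(4+4|\mathbf{v}|)\leq C(1+\Phi(|\mathbf{v}|))$, which is integrable since $\mathbf{v}\in[L^{\Phi}(Q\times\Omega)]^{N}$. Hence the right-hand side is $O(1/n)$ and tends to $0$, which is the claim. The only genuinely delicate point is the measurability/identification step of the first paragraph; once the abstract extensions are known to be the pointwise compositions, the convergence is governed entirely by the mollification estimate above, and notably the dominating function depends only on $f$ (through $c'_{0}=2Nc_{2}$), so no uniformity in $n$ of the constants in $(H_{3})_{n}$ is required.
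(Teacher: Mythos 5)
Your proposal is correct and follows essentially the same route as the paper's proof: both reduce to the pointwise bound $|f_{n}(\omega,\mathbf{v})-f(\omega,\mathbf{v})|\leq\int\theta_{n}(\eta)\,|f(\omega,\mathbf{v}-\eta)-f(\omega,\mathbf{v})|\,d\eta$ and control the integrand via the continuity estimate (\ref{lem7}) of Lemma \ref{lem9} together with $\textup{supp}\,\theta_{n}\subset\frac{1}{n}\overline{B_{N}}$. The only differences are cosmetic: the paper concludes with dominated convergence applied to the functions $H_{n}$, whereas you extract the factor $\frac{1}{n}$ against a fixed integrable majorant (giving an explicit $O(1/n)$ rate, which is if anything slightly sharper), and you additionally spell out the identification of the abstract extension from Proposition \ref{lem23} with the pointwise composition, a step the paper leaves implicit.
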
 
	\begin{proof}
		Let $\mathbf{v} \in \left[L^{\Phi}(Q\times\Omega,\mathbb{R})\right]^{N}$, then 
		\begin{equation*}
			\begin{array}{rl}
				& \|f_{n}(\cdot,\mathbf{v}) - f(\cdot,\mathbf{v})  \|_{L^{1}(Q\times\Omega)}\\
				= & \displaystyle \iint_{Q\times\Omega} | f_{n}(\omega,\mathbf{v}(x,\omega)) - f(\omega,\mathbf{v}(x,\omega))
				|dxd\mu  \\
				\leq & \displaystyle \iint_{Q\times\Omega} \left[ \int \theta_{n}(\eta)| f(\omega,\mathbf{v}(x,\omega)-\eta) - f(\omega,\mathbf{v}(x,\omega))
				|d\eta  \right]dxd\mu  \\
				\leq & c'\, \displaystyle \iint_{Q\times\Omega} \left[ \displaystyle \int \theta_{n}(\eta)[ 1 + \Phi( |\mathbf{v}(x,\omega)-\eta|) + \Phi( |\mathbf{v}(x,\omega)|)]|\eta| d\eta  \right] dxd\mu \\
				\leq & \frac{c''}{n}\, \displaystyle \iint_{Q\times\Omega} \sup_{|\eta|\leq \frac{1}{n}} \left[  1 + \Phi( |\mathbf{v}(x,\omega)-\eta|) + \Phi( |\mathbf{v}(x,\omega)|)   \right] dxd\mu 
			\end{array}
		\end{equation*}
		where $c'$ and $c''$ being independent of $n$. Setting 
		\begin{equation*}
			H_{n}(x,\omega) = \dfrac{1}{n} \sup_{|\eta|\leq \frac{1}{n}} \left[  1 + \Phi( |\mathbf{v}(x,\omega)-\eta|) + \Phi( |\mathbf{v}(x,\omega)|)   \right], \;\, \textup{for} \; (x,\omega) \in Q\times\Omega,
		\end{equation*}
		with $n\geq 1$, we have $H_{n}(x,\omega) \rightarrow 0$ as $n\to \infty$. Thus the result follows by the Lebesgue dominated convergence theorem.
	\end{proof}
	
	\noindent We are now ready to prove one of the most important results of this section.
	
	\begin{proposition}
		Let $(\mathbf{v}_{\epsilon})_{\epsilon}$ be a sequence in $ \left[L^{\Phi}(Q\times\Omega,\mathbb{R})\right]^{N}$ which weakly 2s-converges (in each component) to $\mathbf{v} \in \left[L^{\Phi}(Q\times\Omega,\mathbb{R})\right]^{N}$. Then for any integer $n\geq 1$, we have that there exists a constant $C'$ such that
		\begin{equation*}
			\iint_{Q\times\Omega} f_{n}(\cdot, \mathbf{v}) dxd\mu - \dfrac{C'}{n} \leq \lim_{\epsilon \to 0} \iint_{Q\times\Omega}  f(T(\epsilon^{-1}x)\omega, \mathbf{v}_{\epsilon}(x,\omega))  dxd\mu.
		\end{equation*}
	\end{proposition}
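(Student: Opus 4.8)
The plan is to transfer the problem from the merely convex integrand $f$ to the regularized integrand $f_{n}$, which by $(H_{4})_{n}$ is of class $\mathcal{C}^{1}$ in $\lambda$, and then to establish the lower-semicontinuity inequality for $f_{n}$ by linearization combined with the weak stochastic two-scale convergence in the mean. Since the passage $f \rightsquigarrow f_{n}$ is an $L^{1}$-mollification at scale $1/n$, I expect every error along the way to be of order $1/n$, which is exactly the tolerance $C'/n$ allowed in the statement. Throughout I would work with $\liminf_{\epsilon}$; whenever the limit on the right-hand side actually exists (as in the application to the minimizing sequence) the stated inequality follows at once.

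First I would compare $f$ and $f_{n}$ pointwise. Writing $f_{n}(\omega,\lambda)-f(\omega,\lambda)=\int \theta_{n}(\eta)[f(\omega,\lambda-\eta)-f(\omega,\lambda)]\,d\eta$ and invoking the modulus-of-continuity estimate (\ref{lem7}) of Lemma \ref{lem9} together with $\mathrm{supp}\,\theta_{n}\subset\frac{1}{n}\overline{B_{N}}$, the $\Delta_{2}$- and $\Delta'$-properties of $\Phi$ let me absorb the factor $\Phi(2(1+2|\lambda|+\frac{1}{n}))$ into $C(1+\Phi(|\lambda|))$, giving $f_{n}(\omega,\lambda)-f(\omega,\lambda)\le \frac{C}{n}(1+\Phi(|\lambda|))$, that is $f(\omega,\lambda)\ge f_{n}(\omega,\lambda)-\frac{C}{n}(1+\Phi(|\lambda|))$. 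Substituting $\lambda=\mathbf{v}_{\epsilon}(x,\omega)$, integrating over $Q\times\Omega$, and using that $(\mathbf{v}_{\epsilon})$, being weakly two-scale convergent, is bounded in $[L^{\Phi}(Q\times\Omega)]^{N}$ (so that $\iint \Phi(|\mathbf{v}_{\epsilon}|)\,dxd\mu \le M$ by the $\Delta_{2}$-condition), I obtain $\iint f^{\epsilon}(\cdot,\mathbf{v}_{\epsilon})\,dxd\mu \ge \iint (f_{n})^{\epsilon}(\cdot,\mathbf{v}_{\epsilon})\,dxd\mu-\frac{C'}{n}$.

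It then remains to prove the lower-semicontinuity inequality $\liminf_{\epsilon}\iint (f_{n})^{\epsilon}(\cdot,\mathbf{v}_{\epsilon})\,dxd\mu \ge \iint f_{n}(\omega,\mathbf{v})\,dxd\mu$. For this I fix $\mathbf{w}\in[\mathcal{C}^{\infty}(\Omega)\otimes \mathcal{C}(\overline{Q},\mathbb{R})]^{N}$ and apply the gradient inequality for the convex $\mathcal{C}^{1}$ function $f_{n}(\eta,\cdot)$ at the point $\mathbf{w}(x,\eta)$ with $\eta=T(\epsilon^{-1}x)\omega$, namely $f_{n}(\eta,\mathbf{v}_{\epsilon})\ge f_{n}(\eta,\mathbf{w}^{\epsilon})+\frac{\partial f_{n}}{\partial\lambda}(\eta,\mathbf{w}^{\epsilon})\cdot(\mathbf{v}_{\epsilon}-\mathbf{w}^{\epsilon})$, where $\mathbf{w}^{\epsilon}(x,\omega)=\mathbf{w}(x,T(\epsilon^{-1}x)\omega)$. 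After integration I pass to the limit term by term. The two terms not containing $\mathbf{v}_{\epsilon}$, namely $\iint f_{n}^{\epsilon}(\cdot,\mathbf{w}^{\epsilon})$ and $\iint \frac{\partial f_{n}}{\partial\lambda}(\cdot,\mathbf{w}^{\epsilon})\cdot\mathbf{w}^{\epsilon}$, converge by (\ref{lem17}) for $f_{n}$ and by Remark \ref{lem16}$(R_{1})$, since $f_{n}(\cdot,\mathbf{w})$ and $\frac{\partial f_{n}}{\partial\lambda}(\cdot,\mathbf{w})\cdot\mathbf{w}$ lie in $\mathcal{C}(\overline{Q};L^{\infty}(\Omega))$ and are therefore admissible; the cross term $\iint \frac{\partial f_{n}}{\partial\lambda}(\cdot,\mathbf{w}^{\epsilon})\cdot\mathbf{v}_{\epsilon}$ converges by the weak two-scale convergence of $\mathbf{v}_{\epsilon}$ tested against $\frac{\partial f_{n}}{\partial\lambda}(\cdot,\mathbf{w})$, which by $(H_{4})_{n}$ and the boundedness of $\mathbf{w}$ also belongs to $\mathcal{C}(\overline{Q};L^{\infty}(\Omega))$. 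This yields, for every such $\mathbf{w}$, $\liminf_{\epsilon}\iint (f_{n})^{\epsilon}(\cdot,\mathbf{v}_{\epsilon}) \ge \iint f_{n}(\omega,\mathbf{w})+\iint \frac{\partial f_{n}}{\partial\lambda}(\omega,\mathbf{w})\cdot(\mathbf{v}-\mathbf{w})$. Letting $\mathbf{w}\to\mathbf{v}$ in $[L^{\Phi}(Q\times\Omega)]^{N}$, the first integral tends to $\iint f_{n}(\omega,\mathbf{v})$ by the continuity of $\mathbf{v}\mapsto f_{n}(\cdot,\mathbf{v})$ from Proposition \ref{lem23}, so the desired lower bound follows and, combined with the previous paragraph, proves the claim.

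The hard part is the final limit $\mathbf{w}\to\mathbf{v}$: I must guarantee that the linear correction $\iint \frac{\partial f_{n}}{\partial\lambda}(\cdot,\mathbf{w})\cdot(\mathbf{v}-\mathbf{w})$ vanishes, which by the generalized H\"{o}lder inequality reduces to showing that $\frac{\partial f_{n}}{\partial\lambda}(\cdot,\mathbf{w})$ stays bounded in $[L^{\widetilde{\Phi}}(Q\times\Omega)]^{N}$ while $\|\mathbf{v}-\mathbf{w}\|_{L^{\Phi}}\to 0$. This is where the full strength of the hypotheses enters: the growth bound $(H_{4})_{n}$ forces the estimation of $\widetilde{\Phi}(\Phi(|\mathbf{w}|))$, which is precisely what the $\Delta_{2}\cap\Delta'$ structure of $\Phi$ and $\widetilde{\Phi}$ controls (as already exploited in Lemma \ref{lem9} and Proposition \ref{lem14}); I expect this estimate to require the most care. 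By contrast, the term-by-term passage to the limit and the convexity inequality are routine once Proposition \ref{lem23} and Remark \ref{lem16}$(R_{1})$ are in hand.
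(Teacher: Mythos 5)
Your proposal follows essentially the same route as the paper: an $O(1/n)$ comparison between $f$ and its mollification $f_{n}$, the convexity (gradient) inequality for $f_{n}$ at smooth test fields, term-by-term passage to the limit via Remark \ref{lem16}$(R_{1})$ and admissibility, and removal of the linear correction by density. Your comparison bound $f_{n}-f\leq \frac{C}{n}(1+\Phi(|\lambda|))$ is correct (in (\ref{lem7}) one may simply drop the denominator and use convexity plus $\Delta_{2}$), and it even bypasses the paper's lengthy Young-inequality computation with $\phi(2(1+|\mathbf{v}_{\epsilon}|))$. However, two of your steps do not hold as written.

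First, the step you yourself single out as the hard one fails as formulated. You plan to bound $\|\frac{\partial f_{n}}{\partial\lambda}(\cdot,\mathbf{w})\|_{L^{\widetilde{\Phi}}}$ through the growth bound $(H_{4})_{n}$, i.e.\ through $\widetilde{\Phi}(\Phi(|\mathbf{w}|))$, asserting that this composition is ``precisely what the $\Delta_{2}\cap\Delta'$ structure controls.'' It is not: already for $\Phi(t)=t^{p}/p$ (which lies in $\Delta_{2}\cap\Delta'$) one has $\widetilde{\Phi}(\Phi(t))\sim t^{pq}$ with $pq=p+q>p$, so no bound on $\|\mathbf{w}\|_{L^{\Phi}}$ controls $\iint\widetilde{\Phi}(\Phi(|\mathbf{w}|))\,dxd\mu$; along a sequence of smooth bounded $\mathbf{w}$ converging to $\mathbf{v}$ in $L^{\Phi}$ this dual norm may blow up, and then the linear correction need not vanish. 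The paper avoids this by never invoking the crude bound of $(H_{4})_{n}$ at this stage: it uses the $\phi$-type gradient bound that convexity itself provides (the modulus estimate (\ref{lem7}) of Lemma \ref{lem9} yields $|\frac{\partial f_{n}}{\partial\lambda}(\omega,\lambda)|\leq c\,(1+\phi(2(1+|\lambda|)))$) together with (\ref{lem11}), $\widetilde{\Phi}(\phi(t))\leq k\,\Phi(t)$. That composition bound, not $\widetilde{\Phi}\circ\Phi$, is what keeps $\|1+\phi(|\mathbf{v}_{l}|)\|_{\widetilde{\Phi},Q\times\Omega}$ bounded while $\mathbf{v}_{l}\to\mathbf{v}$ in $L^{\Phi}$, so that the correction term tends to zero. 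Your argument becomes correct once $(H_{4})_{n}$ is replaced by this sharper bound; with $(H_{4})_{n}$ alone, the estimate you are hoping for is genuinely false.

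Second, your opening reduction uses that a weakly stochastically two-scale convergent sequence is bounded in $[L^{\Phi}(Q\times\Omega)]^{N}$. This does not follow from the convergence: the functionals $f\mapsto\iint\mathbf{v}_{\epsilon}\,f_{T}^{\epsilon}\,dxd\mu$ are paired against realizations $f_{T}^{\epsilon}$ that change with $\epsilon$ and span only a proper subspace of $L^{\widetilde{\Phi}}(Q\times\Omega)$, so the uniform boundedness principle gives no control of $\|\mathbf{v}_{\epsilon}\|_{L^{\Phi}}$. (A degenerate example: take $\Omega$ a single point and $v_{\epsilon}=\epsilon^{-1}\chi_{(0,\epsilon)}$; it two-scale converges to $0$ against $\mathcal{C}^{\infty}_{0}$ test functions yet is unbounded in every $L^{\Phi}$.) The paper glosses over this point as well, but the repair is easy and worth stating: by coercivity $(H_{3})$, either the right-hand side of the claimed inequality is $+\infty$, in which case there is nothing to prove, or $\iint f(T(\epsilon^{-1}x)\omega,\mathbf{v}_{\epsilon})\,dxd\mu$ stays bounded, and then $c_{1}\iint\Phi(|\mathbf{v}_{\epsilon}|)\,dxd\mu\leq\iint f(T(\epsilon^{-1}x)\omega,\mathbf{v}_{\epsilon})\,dxd\mu$ supplies exactly the modular bound your first paragraph needs.
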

	\begin{proof}
		Let integer $n\geq 1$ and let $(\mathbf{v}_{l})_{l}$ be a sequence in $\left[ \mathcal{C}^{\infty}_{0}(Q)\otimes\mathcal{C}^{\infty}(\Omega)\right]^{N}$ such that $\mathbf{v}_{l} \rightarrow \mathbf{v}$ in  $\left[L^{\Phi}(Q\times\Omega,\mathbb{R})\right]^{N}$ as $l\to \infty$. \\
		The convexity and the differentiability of $f_{n}(\omega,\cdot)$ imply (for any integer $l\geq 1$) 
		\begin{equation*}
			\begin{array}{rl}
				& \displaystyle \iint_{Q\times\Omega}  f_{n}(T(\epsilon^{-1}x)\omega, \mathbf{v}(x,T(\epsilon^{-1}x)\omega))  dxd\mu \\
				\geq & \displaystyle \iint_{Q\times\Omega}  f_{n}(T(\epsilon^{-1}x)\omega, \mathbf{v}_{l}(x,T(\epsilon^{-1}x)\omega))  dxd\mu  \\
				&   + \displaystyle \iint_{Q\times\Omega} \dfrac{\partial f_{n}}{\partial \lambda} (T(\epsilon^{-1}x)\omega, \mathbf{v}_{l}(x,T(\epsilon^{-1}x)\omega))\cdot \\
				& \qquad  \left( \mathbf{v}(x,T(\epsilon^{-1}x)\omega)  - \mathbf{v}_{l}(x,T(\epsilon^{-1}x)\omega) \right)  dxd\mu  
			\end{array}
		\end{equation*}
		On the other hand, taking into account $(H_{1})_{n}$, $(H_{2})_{n}$ and $(H_{4})_{n}$, the function $x\rightarrow \frac{\partial f_{n}}{\partial \lambda}(\cdot, \mathbf{v}_{l}(x, \cdot))$ of $\overline{Q}$ into $L^{\infty}(\Omega)^{N}$, denoted $\frac{\partial f_{n}}{\partial \lambda}(\cdot, \mathbf{v}_{l})$, belongs to $\mathcal{C}(\overline{Q}; L^{\infty}(\Omega)^{N})$. Hence, arguing as in the first part of the proof of Proposition \ref{lem23}, Remark \ref{lem16}$(R_{1})$ implies that 
		\begin{equation*}
			\begin{array}{rl}
				& \displaystyle\lim_{\epsilon \to 0}   \displaystyle	\iint_{Q\times\Omega} \dfrac{\partial f_{n}}{\partial \lambda} (T(\epsilon^{-1}x)\omega, \mathbf{v}_{l}(x,T(\epsilon^{-1}x)\omega))\cdot \\
				& \qquad \qquad \qquad \left( \mathbf{v}(x,T(\epsilon^{-1}x)\omega) - \mathbf{v}_{l}(x,T(\epsilon^{-1}x)\omega) \right)  dxd\mu  \\
				= & \displaystyle \iint_{Q\times\Omega} \dfrac{\partial f_{n}}{\partial \lambda} (\omega, \mathbf{v}_{l}(x,\omega))\cdot \left( \mathbf{v}(x,\omega) - \mathbf{v}_{l}(x,\omega) \right) dxd\mu. 
			\end{array} 
		\end{equation*}
		Next, we observe that for a.e. $\omega$ and every $\lambda$ and a suitable positive constant $c'_{0}$ one has 
		\begin{equation}\label{bel1}
			f_{n}(\omega,\lambda) \leq f(\omega,\lambda) + \dfrac{1}{n}c'_{0} (1 + \phi(2(1+|\lambda|))).
		\end{equation}	
		Indeed, for a.e. $\omega$, every $\lambda$, $\nu$, by (\ref{lem7}), 
		\begin{equation*}
			\begin{array}{lcr}
				f(\omega,\lambda) &  \leq &  f(\omega,\nu) +  c'_{0} \dfrac{ \Phi(2(1+ |\lambda| + |\nu|))}{1+ |\lambda| + |\nu|} |\lambda - \nu | \\
				& \leq & f(\omega,\nu) + c'_{0} (1 + \phi( 1+ |\lambda| + |\nu|))|\lambda - \nu|.
			\end{array}
		\end{equation*}	
		Replacing $\lambda$ by $\lambda-\eta$ and $\nu$ by $\lambda$ respectively, we obtain: 
		\begin{equation*}
			\begin{array}{lcr}
				f(\omega,\lambda-\eta) &  \leq &  f(\omega,\lambda) +  c'_{0} (1 + \phi( 1+ |\lambda-\eta| + |\lambda|))|\nu| \\
				& \leq & f(\omega,\lambda) + c'_{0} (1 + \phi( 1+ |\eta| + 2|\lambda|))|\nu|.
			\end{array}
		\end{equation*}
		Let $n>0$, and assume $|\eta| \leq \frac{1}{n} \leq 1$, hence, 
		\begin{equation*}
			f(\omega,\lambda-\eta)   \leq   f(\omega,\lambda) +  c'_{0} (1 + \phi(2( 1 + |\lambda|)))\frac{1}{n}. 
		\end{equation*}	
		Multiplying both side of the inequality, by $\theta_{n}$, we get: 
		\begin{equation*}
			f(\omega,\lambda-\eta)\theta_{n}(\eta)   \leq   f(\omega,\lambda)\theta_{n}(\eta) +  \frac{1}{n}c'_{0} (1 + \phi(2( 1 + |\lambda|))) \theta_{n}(\eta). 
		\end{equation*}
		Integration leads to (\ref{bel1}). Hence, given $\mathbf{v}_{\epsilon}(x,\omega)= \mathbf{v}(x, T(\epsilon^{-1}x)\omega)$, we have 	
		\begin{equation*}
			f_{n}(T(\epsilon^{-1}x)\omega,\mathbf{v}_{\epsilon}) \leq f(T(\epsilon^{-1}x)\omega,\mathbf{v}_{\epsilon}) + \dfrac{1}{n} c'_{0} (1 + \phi(2(1+|\mathbf{v}_{\epsilon}|))).
		\end{equation*}
		Thus 
		\begin{equation*}
			\begin{array}{rcl}
				\displaystyle	\iint_{Q\times\Omega} f_{n}(T(\epsilon^{-1}x)\omega,\mathbf{v}_{\epsilon}) dx d\mu & \leq & \displaystyle \iint_{Q\times\Omega} f(T(\epsilon^{-1}x)\omega,\mathbf{v}_{\epsilon}) dx d\mu  + \dfrac{1}{n}C|Q\times\Omega|  \\
				& & + \dfrac{c'_{0}}{n} \displaystyle \iint_{Q\times\Omega} \alpha \dfrac{\phi(2(1+|\mathbf{v}_{\epsilon}|))}{\alpha} dx d\mu,
			\end{array}
		\end{equation*}
		with $0 < \alpha \leq 1$.
		But 
		\begin{equation*}
			\alpha \dfrac{\phi(2(1+|\mathbf{v}_{\epsilon}|))}{\alpha} \leq \widetilde{\Phi}(\alpha\phi(2(1+|\mathbf{v}_{\epsilon}|))) + \Phi\left(\frac{1}{\alpha}\right) \leq \alpha\widetilde{\Phi}(\phi(2(1+|\mathbf{v}_{\epsilon}|))) + \Phi\left(\frac{1}{\alpha}\right).
		\end{equation*}
		Set $\Gamma_{1} = \{ (x,\omega) \in Q\times\Omega : 2(1+|\mathbf{v}_{\epsilon}|) > t_{0} \}$, $\Gamma_{2} = Q\times\Omega \backslash \Gamma_{1}$.  \\
		Hence,we get 
		\begin{equation*}
			\begin{array}{rl}
				& \displaystyle\iint_{Q\times \Omega} \alpha \dfrac{\phi(2(1+|\mathbf{v}_{\epsilon}|))}{\alpha} dx d\mu \\
				\leq & \displaystyle\iint_{Q\times \Omega} \alpha\widetilde{\Phi}(\phi(2(1+|\mathbf{v}_{\epsilon}|))) dx d\mu + \Phi\left(\frac{1}{\alpha}\right) |Q\times\Omega| \\
				\leq &  |\Gamma_{2}|\alpha \widetilde{\Phi}(\phi(t_{0})) +  \Phi\left(\frac{1}{\alpha}\right) |Q\times\Omega| + \alpha \displaystyle \int_{\Gamma_{1}} \Phi(4(1+|\mathbf{v}_{\epsilon}|)) dx d\mu.
			\end{array}
		\end{equation*}
		Let $C > 1 + \|4(1+|\mathbf{v}_{\epsilon}|)\|_{\Phi,Q\times\Omega}$. Then $\iint_{Q\times \Omega} \Phi\left(\frac{4(1+|\mathbf{v}_{\epsilon}|)}{C}\right) dx d\mu \leq 1$.  \\
		Since
		\begin{equation*}
			\begin{array}{rcl}
				\Phi(4(1+|\mathbf{v}_{\epsilon}|)) &=& \Phi\left(C\frac{4(1+|\mathbf{v}_{\epsilon}|)}{C}\right) \\
				& \leq& K(C) \Phi\left(\frac{4(1+|\mathbf{v}_{\epsilon}|)}{C}\right) \; \textup{whenever}\; \frac{4(1+|\mathbf{v}_{\epsilon}|)}{C} \geq t_{0}.
			\end{array}
		\end{equation*}
		Set $\Gamma_{3} = \left\{ (x,\omega) \in \Gamma_{1} : \frac{4(1+|\mathbf{v}_{\epsilon}|)}{C} \geq t_{0} \right\}$, $\Gamma_{4} = \Gamma_{1}\backslash\Gamma_{3}$. \\
		Hence,
		\begin{equation*}
			\begin{array}{rcl}
				\int_{\Gamma_{1}} \Phi(4(1+|\mathbf{v}_{\epsilon}|)) dx d\mu & = & \displaystyle \int_{\Gamma_{4}} \Phi(4(1+|\mathbf{v}_{\epsilon}|)) dx d\mu + \displaystyle\int_{\Gamma_{3}} \Phi(4(1+|\mathbf{v}_{\epsilon}|)) dx d\mu \\
				& \leq &  |\Gamma_{4}|\Phi(C t_{0}) + \displaystyle\int_{\Gamma_{3}} \Phi(4(1+|\mathbf{v}_{\epsilon}|)) dx d\mu  \\
				& \leq &  |\Gamma_{4}|\Phi(C t_{0}) + \alpha \displaystyle\int_{\Gamma_{3}} \Phi\left(C\dfrac{4(1+|\mathbf{v}_{\epsilon}|)}{C}\right) dx d\mu \\
				& \leq & |\Gamma_{4}|\Phi(C t_{0}) + K(C) \displaystyle\int_{\Gamma_{3}} \Phi\left(\frac{4(1+|\mathbf{v}_{\epsilon}|)}{C}\right)dx d\mu  \\
				& \leq & |\Gamma_{4}|\Phi(C t_{0}) + K(C) \displaystyle \iint_{Q\times\Omega} \Phi\left(\frac{4(1+|\mathbf{v}_{\epsilon}|)}{C}\right)dx d\mu.
			\end{array}
		\end{equation*}
		Since $\Phi \in \Delta_{2}$, and $(\mathbf{v}_{\epsilon})$ is bounded in $L^{\Phi}(Q\times\Omega)^{N}$ it results that\\ $\iint_{Q\times\Omega} \Phi\left(4(1+|\mathbf{v}_{\epsilon}|)\right)dx d\mu$ is also bounded.  
		Then we have 
		\begin{equation*}
			\begin{array}{rl}
				&\displaystyle \iint_{Q\times\Omega} f_{n}(T(\epsilon^{-1}x)\omega,\mathbf{v}_{\epsilon}) dx d\mu   \\
				\leq & \displaystyle\iint_{Q\times\Omega} f(T(\epsilon^{-1}x)\omega,\mathbf{v}_{\epsilon}) dx d\mu + \dfrac{1}{n}C|Q\times\Omega| \\
				&  + \dfrac{c'_{0}}{n} \big( \alpha |Q\times\Omega|\widetilde{\Phi}(\phi(t_{0})) + \Phi\left(\frac{1}{\alpha}\right)|Q\times\Omega|  \\
				& + \alpha\left( |\Gamma_{4}|\Phi(C t_{0}) + K(C) \right) \displaystyle \iint_{Q\times\Omega} \Phi \left(  \dfrac{4(1+|\mathbf{v}_{\epsilon}|)}{C} \right) dx d\mu    \big)  \\
				\leq & \displaystyle \iint_{Q\times\Omega} f(T(\epsilon^{-1}x)\omega,\mathbf{v}_{\epsilon}) dx d\mu + \dfrac{1}{n}C'  
			\end{array}
		\end{equation*}
		for a suitably big constant $C'$. \\
		Thus
		\begin{equation*}
			\begin{array}{l}
				\displaystyle	\liminf_{\epsilon \to 0}\displaystyle \iint_{Q\times\Omega}  f(T(\epsilon^{-1}x)\omega, \mathbf{v}(x,T(\epsilon^{-1}x)\omega))  dxd\mu  \\
				\geq  \displaystyle \iint_{Q\times\Omega}  f_{n}(\omega, \mathbf{v}_{l}(x,\omega))  dxd\mu \\
				\quad - \frac{C'}{n}  + \displaystyle \iint_{Q\times\Omega} \dfrac{\partial f_{n}}{\partial \lambda} (\omega, \mathbf{v}_{l}(x,\omega))\cdot \left( \mathbf{v}(x,\omega) - \mathbf{v}_{l}(x,\omega) \right)  dxd\mu. 
			\end{array}
		\end{equation*}	
		Using $(H_{4})_{n}$ combined with the H\"{o}lder's inequality and (\ref{lem11}) yields 
		\begin{equation*}
			\begin{array}{rl}
				& \left| \displaystyle\iint_{Q\times\Omega} \dfrac{\partial f_{n}}{\partial \lambda} (\omega, \mathbf{v}_{l}(x,\omega))\cdot \left( \mathbf{v}(x,\omega) - \mathbf{v}_{l}(x,\omega) \right)  dxd\mu \right| \\
				\leq& c'_{0} \, \|1 + \phi(|\mathbf{v}_{l}|)\|_{\widetilde{\Phi},Q\times\Omega}\cdot \|\mathbf{v}-\mathbf{v}_{l}\|_{\Phi,Q\times \Omega}.
			\end{array}
		\end{equation*}
		Since $\mathbf{v}_{l} \rightarrow \mathbf{v}$ in  $\left[L^{\Phi}(Q\times\Omega,\mathbb{R})\right]^{N}$ as $l\to \infty$, it follows that for $\delta > 0$ arbitrarily fixed, there exists $l_{0} \in \mathbb{N}$ such that 
		\begin{equation*}
			\left| \iint_{Q\times\Omega} \dfrac{\partial f_{n}}{\partial \lambda} (\omega, \mathbf{v}_{l}(x,\omega))\cdot \left( \mathbf{v}(x,\omega) - \mathbf{v}_{l}(x,\omega) \right)  dxd\mu \right| \leq \delta 
		\end{equation*}
		for all $l \geq l_{0}$.
		Hence for all $l \geq l_{0}$,
		\begin{equation*}
			\lim_{\epsilon \to 0} \iint_{Q\times\Omega}  f(T(\epsilon^{-1}x)\omega, \mathbf{v}_{\epsilon})  dxd\mu \geq \int_{Q\times\Omega} f_{n}(\omega, \mathbf{v}_{l}(x,\omega))  dxd\mu - \delta - \dfrac{C'}{n}.
		\end{equation*}
		Now sending $l \to \infty$ we have 
		\begin{equation*}
			\lim_{\epsilon \to 0} \iint_{Q\times\Omega}  f(T(\epsilon^{-1}x)\omega, \mathbf{v}_{\epsilon})  dxd\mu \geq \int_{Q\times\Omega} f_{n}(\omega, \mathbf{v}(x,\omega))  dxd\mu - \delta - \dfrac{C'}{n}.
		\end{equation*}
		Since $\delta$ is arbitrarily fixed, we are led at once to the result by letting $\delta \to 0$.
	\end{proof}
	
	Letting $n\to\infty$, and replacing $\mathbf{v}_{\epsilon}$ by $Du_{\epsilon}$, with $Du_{\epsilon}$ stochastically weakly two-scale converges in the mean (componentwise) to $\mathbb{D}\mathbf{u} = Du_{0} + \overline{D}_{\omega}u_{1}$ in $\left[ L^{\Phi}(Q\times\Omega, \mathbb{R}) \right]^{N}$, one obtains the following result : 
	\begin{corollary}\label{lem32}
		Let $(u_{\epsilon})_{\epsilon\in E}$ be a sequence in $W^{1}L^{\Phi}_{D_{x}}(Q ; L^{\Phi}(\Omega))$. Assume that $(Du_{\epsilon})_{\epsilon\in E}$ is stochastically weakly two-scale convergent in the mean componentwise to $\mathbb{D}\mathbf{u} = Du_{0} + \overline{D}_{\omega}u_{1} \in \left[ L^{\Phi}(Q\times\Omega, \mathbb{R}) \right]^{N}$, where $\mathbf{u} = (u_{0}, u_{1}) \in \mathbb{F}^{1}_{0}L^{\Phi}$ [see (\ref{banach1})]. Then 
		\begin{equation*}
			\iint_{Q\times\Omega} f(\omega, \mathbb{D}\mathbf{u}(x,\omega)) dxd\mu \leq \lim_{\epsilon \to 0} \iint_{Q\times\Omega}  f(T(\epsilon^{-1}x)\omega, Du_{\epsilon}(x,\omega))  dxd\mu.
		\end{equation*}
	\end{corollary}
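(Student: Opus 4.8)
The plan is to obtain this corollary as a direct consequence of the preceding Proposition together with the preceding Lemma, by a single limit passage in the regularization parameter $n$. First I would specialize the Proposition to the choice $\mathbf{v}_{\epsilon} = Du_{\epsilon}$ and $\mathbf{v} = \mathbb{D}\mathbf{u} = Du_{0} + \overline{D}_{\omega}u_{1}$. The standing hypothesis that $(Du_{\epsilon})_{\epsilon\in E}$ is stochastically weakly two-scale convergent in the mean componentwise to $\mathbb{D}\mathbf{u}$ is exactly the convergence demanded there, and $\mathbb{D}\mathbf{u} \in \left[L^{\Phi}(Q\times\Omega,\mathbb{R})\right]^{N}$ precisely because $(u_{0},u_{1}) \in \mathbb{F}_{0}^{1}L^{\Phi}$. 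Applying the Proposition then yields, for every integer $n\geq 1$,
\begin{equation*}
	\iint_{Q\times\Omega} f_{n}(\omega, \mathbb{D}\mathbf{u}(x,\omega))\, dxd\mu - \dfrac{C'}{n} \leq \lim_{\epsilon \to 0} \iint_{Q\times\Omega} f(T(\epsilon^{-1}x)\omega, Du_{\epsilon}(x,\omega))\, dxd\mu.
\end{equation*}

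Next I would let $n\to\infty$. The one structural point that must be verified is that the constant $C'$ furnished by the Proposition does \emph{not} depend on $n$. Inspecting its construction, $C'$ is assembled from $|Q\times\Omega|$, the threshold $t_{0}$, the $\Delta_{2}$-data $K(C)$ and $\Phi(Ct_{0})$, and the bound on $\iint_{Q\times\Omega} \Phi\left(4(1+|Du_{\epsilon}|)/C\right)dxd\mu$ coming from the boundedness of $(Du_{\epsilon})$ in $L^{\Phi}(Q\times\Omega)^{N}$ together with $\Phi \in \Delta_{2}$; none of these quantities involve $n$. Consequently $C'/n \to 0$ as $n\to\infty$, which is what makes the limit passage legitimate.

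Finally, the preceding Lemma provides $f_{n}(\cdot,\mathbb{D}\mathbf{u}) \to f(\cdot,\mathbb{D}\mathbf{u})$ in $L^{1}(Q\times\Omega)$ as $n\to\infty$, whence $\iint_{Q\times\Omega} f_{n}(\omega,\mathbb{D}\mathbf{u})\, dxd\mu \to \iint_{Q\times\Omega} f(\omega,\mathbb{D}\mathbf{u})\, dxd\mu$. Letting $n\to\infty$ in the displayed inequality, the left-hand side converges to $\iint_{Q\times\Omega} f(\omega,\mathbb{D}\mathbf{u})\, dxd\mu$ while the right-hand side is unchanged, producing the asserted bound.

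The point I expect to require the most care is therefore not a new estimate but the bookkeeping just described: confirming that $C'$ can be chosen independently of $n$ and that $\mathbb{D}\mathbf{u}$ genuinely lies in $\left[L^{\Phi}(Q\times\Omega,\mathbb{R})\right]^{N}$ so that the Lemma is applicable. All the genuinely hard analysis — exploiting the strict convexity and differentiability of $f_{n}$, the linearization through $\partial f_{n}/\partial\lambda$, and the $\Delta_{2}$-type uniform integrability of $\Phi(4(1+|\mathbf{v}_{\epsilon}|))$ — is already packaged inside the Proposition, so the corollary itself reduces to this double limit.
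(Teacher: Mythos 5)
Your proposal is correct and follows exactly the paper's own route: the paper derives the corollary in a single sentence by taking $\mathbf{v}_{\epsilon} = Du_{\epsilon}$, $\mathbf{v} = \mathbb{D}\mathbf{u}$ in the preceding proposition and letting $n\to\infty$ via the preceding lemma on $f_{n}(\cdot,\mathbf{v}) \rightarrow f(\cdot,\mathbf{v})$ in $L^{1}(Q\times\Omega)$. Your additional check that $C'$ is independent of $n$ (which the proposition's quantifier order leaves ambiguous) is a worthwhile clarification of the same argument, not a different one.
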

	
	\subsection{Main homogenization result}
	
	Our main objective in this section is to prove the following 
	\begin{theorem}\label{lem37}
		For each $\epsilon > 0$, let $(u_{\epsilon})_{\epsilon\in E} \in W^{1}_{0}L^{\Phi}_{D_{x}}(Q ; L^{\Phi}(\Omega))$ be the unique solution of (\ref{lem26}). Then, as $\epsilon \to 0$, 
		\begin{equation}\label{lem30}
			u_{\epsilon} \rightarrow u_{0} \quad stoch.\; mean\;\textup{in} \; L^{\Phi}(Q\times\Omega)-weak \,2s  
		\end{equation}
		and 
		\begin{equation}\label{lem31}
			Du_{\epsilon} \rightarrow Du_{0} + \overline{D}_{\omega}u_{1} \quad stoch.\; mean\;\textup{in} \; L^{\Phi}(Q\times\Omega)^{N}-weak \,2s,  
		\end{equation}
		where $\mathbf{u} = (u_{0}, u_{1}) \in \mathbb{F}_{0}^{1}L^{\Phi}$ [see (\ref{banach1})], is the unique solution to the minimization problem (\ref{lem28}). 
	\end{theorem}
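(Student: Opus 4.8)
The plan is to run the direct method of the calculus of variations adapted to the stochastic two-scale setting: first derive uniform a priori bounds, then extract a two-scale limit through Theorem \ref{lem29}, establish a lower bound by two-scale lower semicontinuity, produce a matching upper bound via an explicit recovery sequence, and finally use uniqueness to upgrade subsequential convergence to convergence of the whole family. First I would obtain the a priori estimate. Since $u_\epsilon$ minimizes $F_\epsilon$, comparing with the admissible competitor $v\equiv 0$ and using $(H_3)$ together with $\Phi(0)=0$ gives $F_\epsilon(u_\epsilon)\le F_\epsilon(0)\le c_2|Q\times\Omega|$. The coercivity furnished by the left-hand inequality in (\ref{lem15}) then bounds $\|Du_\epsilon\|_{L^\Phi(Q\times\Omega)^N}$ uniformly in $\epsilon$, and since the norm of $W^1_0L^\Phi_{D_x}(Q;L^\Phi(\Omega))$ controls the $L^\Phi(Q\times\Omega)$-norm (a Poincar\'{e}-type inequality implicit in the definition of that norm), the family $(u_\epsilon)$ is bounded in $L^\Phi(Q\times\Omega)$ as well. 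Thus hypothesis $i)$ of Theorem \ref{lem29} holds, and that theorem yields a subsequence $E'\subset E$ and a pair $\mathbf{u}=(u_0,u_1)\in\mathbb{F}_0^1L^\Phi$ [see (\ref{banach1})] with $u_0\in W^1_0L^\Phi_{D_x}(Q;I^\Phi_{nv}(\Omega))$ and $u_1\in L^\Phi(Q;W^1_\#L^\Phi(\Omega))$, such that along $E'$ one has $u_\epsilon\to u_0$ stoch. mean in $L^\Phi(Q\times\Omega)$-weak $2s$ and $Du_\epsilon\to Du_0+\overline{D}_\omega u_1=\mathbb{D}\mathbf{u}$ stoch. mean in $L^\Phi(Q\times\Omega)^N$-weak $2s$.

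Next I would establish the two matching bounds. The lower bound is immediate from Corollary \ref{lem32} applied to $Du_\epsilon$, giving $F(\mathbf{u})=\iint_{Q\times\Omega}f(\omega,\mathbb{D}\mathbf{u})\,dxd\mu\le\liminf_{\epsilon\to 0}F_\epsilon(u_\epsilon)$. For the upper bound I would use the recovery sequence supplied by Corollary \ref{lem24}: for an arbitrary smooth competitor $(\psi_0,\psi_1)\in F_0^\infty$ [see (\ref{banach2})] set $\phi_\epsilon(x,\omega)=\psi_0(x,\omega)+\epsilon\psi_1(x,T(\epsilon^{-1}x)\omega)$, which belongs to $W^1_0L^\Phi_{D_x}(Q;L^\Phi(\Omega))$ because $\psi_0,\psi_1$ have compact support in $Q$. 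Corollary \ref{lem24} then gives $F_\epsilon(\phi_\epsilon)\to\iint_{Q\times\Omega}f(\omega,D_x\psi_0+D_\omega\psi_1)\,dxd\mu=F(\psi_0,\psi_1)$, recalling that $\psi_0$ is $T$-invariant so $\mathbb{D}(\psi_0,\psi_1)=D_x\psi_0+\overline{D}_\omega\psi_1$. Since $u_\epsilon$ is a minimizer over $W^1_0L^\Phi_{D_x}(Q;L^\Phi(\Omega))$, we have $F_\epsilon(u_\epsilon)\le F_\epsilon(\phi_\epsilon)$, whence $\limsup_{\epsilon\to 0}F_\epsilon(u_\epsilon)\le F(\psi_0,\psi_1)$. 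Chaining the two bounds yields $F(\mathbf{u})\le F(\psi_0,\psi_1)$ for every $(\psi_0,\psi_1)\in F_0^\infty$.

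The main obstacle, and the decisive final step, is to transfer this comparison from the dense subspace $F_0^\infty$ to all of $\mathbb{F}_0^1L^\Phi$; the delicacy is that the approximation must take place in the Orlicz-Sobolev norm (which involves $Dv_0$ and $\overline{D}_\omega v_1$), while the continuity of $F$ must be controlled through the single gradient argument $\mathbb{D}\mathbf{v}$. Here I would invoke the density of $F_0^\infty$ in $\mathbb{F}_0^1L^\Phi$ together with the continuity of $\mathbf{v}\mapsto F(\mathbf{v})$, which follows from the Orlicz Lipschitz-type estimate of Proposition \ref{lem23} applied with $\mathbf{v}=\mathbb{D}\mathbf{v}$: for arbitrary $(v_0,v_1)\in\mathbb{F}_0^1L^\Phi$ choose $(\psi_0^k,\psi_1^k)\in F_0^\infty$ with $(\psi_0^k,\psi_1^k)\to(v_0,v_1)$ in $\mathbb{F}_0^1L^\Phi$, so that $\mathbb{D}(\psi_0^k,\psi_1^k)\to\mathbb{D}(v_0,v_1)$ in $L^\Phi(Q\times\Omega)^N$ and hence $F(\mathbf{u})\le F(\psi_0^k,\psi_1^k)\to F(v_0,v_1)$, giving $F(\mathbf{u})\le F(v_0,v_1)$. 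Therefore $\mathbf{u}=(u_0,u_1)$ minimizes $F$ over $\mathbb{F}_0^1L^\Phi$, and by the uniqueness of the minimizer of (\ref{lem28}) it is exactly the unique solution of that problem. Finally, since the limit $\mathbf{u}$ is determined independently of the extracted subsequence $E'$, the standard argument that every subsequence of $E$ admits a further subsequence converging to the same $\mathbf{u}$ promotes the convergences (\ref{lem30})--(\ref{lem31}) to the whole family $(u_\epsilon)_{\epsilon\in E}$, which completes the proof.
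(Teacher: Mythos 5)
Your proposal is correct and follows essentially the same route as the paper's proof: a priori bounds from $(H_3)$, compactness via Theorem \ref{lem29}, the lower bound from Corollary \ref{lem32}, the upper bound from the recovery sequence of Corollary \ref{lem24} combined with density of $F_0^\infty$ in $\mathbb{F}_0^1L^\Phi$, and uniqueness of the minimizer of (\ref{lem28}) to upgrade to convergence of the whole family. You merely make explicit some steps the paper leaves implicit (the comparison with $v\equiv 0$ for coercivity, the Poincar\'{e}-type control of the $L^\Phi$-norm, and the continuity of $F$ via Proposition \ref{lem23} behind the phrase ``by density'').
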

	\begin{proof}
		In view of the growth conditions in $(H_{3})$, the sequence $(u_{\epsilon})_{\epsilon>0}$ is bounded in    $W^{1}_{0}L^{\Phi}_{D_{x}}(Q ; L^{\Phi}(\Omega))$ and so the sequence $(f^{\epsilon}(\cdot, Du_{\epsilon}))_{\epsilon>0}$ is bounded in $L^{1}(Q\times\Omega)$. Thus, given an arbitrary fundamental sequence $E$, we get by Theorem \ref{lem29} the existence of a subsequence $E'$ from $E$ and a couple $\mathbf{u} = (u_{0}, u_{1}) \in \mathbb{F}_{0}^{1}L^{\Phi}$ such that (\ref{lem30})-(\ref{lem31}) hold when $E' \ni \epsilon \to 0$. The sequence $(F_{\epsilon}(u_{\epsilon}))_{\epsilon>0}$ consisting of real numbers being bounded, since $(u_{\epsilon})_{\epsilon>0}$ is bounded in  $W^{1}_{0}L^{\Phi}_{D_{x}}(Q ; L^{\Phi}(\Omega))$, there exists a subsequence from $E'$ not relabeled such that $\lim_{E' \ni\epsilon \to 0} F_{\epsilon}(u_{\epsilon})$ exists. \\
		It remains to verify that $\mathbf{u} = (u_{0}, u_{1})$ solves (\ref{lem28}). In fact, if $\mathbf{u}$ solves this problem, then thanks to the uniqueness of the solution of (\ref{lem28}), the whole sequence $(u_{\epsilon})_{\epsilon>0}$ will verify (\ref{lem30}) and (\ref{lem31}) when $\epsilon \to 0$. Thus our only concern here is to check that $\mathbf{u}$ solves problem (\ref{lem28}). To this end, in view of Corollary \ref{lem32}, we have 
		\begin{equation}\label{lem34}
			\iint_{Q\times\Omega} f(\omega, \mathbb{D}\mathbf{u}) dxd\mu \leq \lim_{E'\ni\epsilon \to 0} \iint_{Q\times\Omega}  f(T(\epsilon^{-1}x)\omega, Du_{\epsilon}(x,\omega))  dxd\mu.
		\end{equation}
		On the other hand, let us establish an upper bound for 
		\begin{equation*}
			\iint_{Q\times\Omega}  f(T(\epsilon^{-1}x)\omega, Du_{\epsilon}(x,\omega))  dxd\mu.
		\end{equation*}
		To do that, let $\phi = \left(\psi_{0}, I_{\Phi}(\psi_{1})\right) \in F^{\infty}_{0}$, [see (\ref{banach2})], with $\psi_{0} \in \mathcal{C}_{0}^{\infty}(Q)\otimes I_{nv}^{\Phi}(\Omega)$, $\psi_{1} \in \mathcal{C}_{0}^{\infty}(Q)\otimes \mathcal{C}^{\infty}(\Omega)$. 	Define $\phi_{\epsilon}$ as in Corollary \ref{lem24}. Since $u_{\epsilon}$ is the minimizer, one has 
		\begin{equation*}
			\iint_{Q\times\Omega}  f(T(\epsilon^{-1}x)\omega, Du_{\epsilon}(x,\omega))  dxd\mu \leq \iint_{Q\times\Omega}  f(T(\epsilon^{-1}x)\omega, \phi_{\epsilon}(x,\omega))  dxd\mu.
		\end{equation*}
		Thus, using Corollary \ref{lem24} we get 
		\begin{equation*}
			\lim_{E'\ni\epsilon \to 0} \iint_{Q\times\Omega}  f(T(\epsilon^{-1}x)\omega, Du_{\epsilon}(x,\omega))  dxd\mu \leq\iint_{Q\times\Omega}  f(\cdot, D\psi_{0} + D_{\omega}\psi_{1})  dxd\mu,
		\end{equation*}
		for any $\phi \in F^{\infty}_{0}$, and by density, for all $\phi \in \mathbb{F}_{0}^{1}L^{\Phi}$. From which we get 
		\begin{equation}\label{lem33}
			\lim_{E'\ni\epsilon \to 0} \iint_{Q\times\Omega}  f(T(\epsilon^{-1}x)\omega, Du_{\epsilon}(x,\omega))  dxd\mu \leq  \inf_{\mathbf{v} \in \mathbb{F}_{0}^{1}L^{\Phi}} \iint_{Q\times\Omega}  f(\cdot, \mathbb{D}\mathbf{v})  dxd\mu.
		\end{equation}
		Inequalities (\ref{lem34}) and (\ref{lem33}) yield 
		\begin{equation*}
			\iint_{Q\times\Omega} f(\omega, \mathbb{D}\mathbf{u}) dxd\mu = \inf_{\mathbf{v} \in \mathbb{F}_{0}^{1}L^{\Phi}} \iint_{Q\times\Omega}  f(\cdot, \mathbb{D}\mathbf{v})  dxd\mu
		\end{equation*}
		i.e. (\ref{lem28}). The proof is complete.
	\end{proof}
	\begin{remark}
		Indeed, when we consider the particular dynamical system $T(x)$ on $\Omega = \mathbb{T}^{N} \equiv \mathbb{R}^{N}/\mathbb{Z}^{N}$ (the $N$-dimensional torus) defined by $T(x)\omega = x+\omega\;\textup{mod}\;\mathbb{Z}^{N}$, then one can view $\Omega$ as the unit cube in $\mathbb{R}^{N}$ with all the pairs of antipodal faces being identified. The Lebesgue measure on $\mathbb{R}^{N}$ induces the Haar measure on $\mathbb{T}^{N}$ which is invariant with respect to the action of $T(x)$ on $\mathbb{T}^{N}$. Moreover $T(x)$ is ergodic and in this situation, any function on $\Omega$ may be regarded as a periodic function on $\mathbb{R}^{N}$ whose period in each coordinate is 1, so that in this case our integrand $f$ may be viewed as a periodic function with respect to the variable $\omega$. Therefore, stochastic problem (\ref{lem26}) is equivalent to the periodic problem (see \cite{tacha1}),
		\begin{equation*}
			\min \left\{ \int_{Q} f\left(\frac{x}{\epsilon}, Dv(x) \right)dx \; : \; v \in  W^{1}_{0}L^{\Phi}(Q) \right\}.
		\end{equation*}
	\end{remark}
	
	
	\bmhead{Acknowledgments} The authors would like to thank the anonymous referee for his/her pertinent remarks, comments and suggestions.
	
	

	\nocite{*}  
	\bibliographystyle{abbrv}
	\bibliography{biblio2sm}

\begin{thebibliography}{10}

\bibitem{abda}
Y.~Abddaimi, G.~Michaille, and C.~Licht.
\newblock Stochastic homogenization for an integral functional of a quasiconvex
  function with linear growth.
\newblock {\em Asymptotic Analysis}, \textbf{15}(2):183--202, 1997.

\bibitem{adam}
R.~A. Adams.
\newblock {\em Sobolev spaces}.
\newblock Academic Press, New York, 1975.

\bibitem{allair1}
G.~Allaire.
\newblock Homogenization and two-scale convergence.
\newblock {\em SIAM Journal on Mathematical Analysis},
  \textbf{23}(6):1482--1518, 1992.

\bibitem{andre}
K.~T. Andrews and S.~Wright.
\newblock Stochastic homogenization of elliptic boundary-value problems with
  {$L^{p}$}-data.
\newblock {\em Asymptotic Analysis}, \textbf{17}(3):165--184, 1998.

\bibitem{bia}
M.~Ba{\'i}a and I.~Fonseca.
\newblock {$\Gamma$}-convergence of functionals with periodic integrands via
  2-scale convergence.
\newblock Technical report, number 05-CNA-010, 2005.

\bibitem{boga}
V.~I. Bogachev and M.~A.~S. Ruas.
\newblock {\em Measure theory}, volume~1.
\newblock Springer, Verlag, 2007.

\bibitem{bourgeat}
A.~Bourgeat, A.~Mikelic, and S.~Wright.
\newblock Stochastic two-scale convergence in the mean and applications.
\newblock {\em Journal für die reine und angewandte Mathematik},
  \textbf{456}:19--52, 1994.

\bibitem{jr}
J.~Diestel and J.~Uhl.
\newblock {\em Vector measures. Math. Surveys, 15}.
\newblock Amer. Math. Soc., Providence, Rhode Island, 1977.

\bibitem{joe}
J.~Fotso~Tachago.
\newblock {\em Homog{\'e}n{\'e}isation stochastique-p{\'e}riodique des
  {\'e}quations de Maxwell}.
\newblock PhD thesis, Universit{\'e} de Yaound{\'e} I, Cameroun, 2017.

\bibitem{tacha2}
J.~Fotso~Tachago, G.~Gargiulo, H.~Nnang, and E.~Zappale.
\newblock Multiscale homogenization of integral convex functionals in orlicz
  sobolev setting.
\newblock {\em Evolution Equations and Control Theory},
  \textbf{10}(2):297--320, 2021.

\bibitem{tacha5}
J.~Fotso~Tachago, G.~Gargiulo, H.~Nnang, and E.~Zappale.
\newblock Some convergence results on the periodic unfolding operator in orlicz
  setting.
\newblock In {\em International Conference on Integral Methods in Science and
  Engineering}, pages 361--371. Springer, 2022.

\bibitem{tacha1}
J.~Fotso~Tachago and H.~Nnang.
\newblock Two-scale convergence of integral functionals with convex, periodic
  and nonstandard growth integrands.
\newblock {\em Acta applicandae mathematicae}, \textbf{121}:175--196, 2012.

\bibitem{tacha6}
J.~Fotso~Tachago, H.~Nnang, and E.~Zappale.
\newblock Relaxation of periodic and nonstandard growth integrals by means of
  two-scale convergence.
\newblock In {\em Integral Methods in Science and Engineering: Analytic
  Treatment and Numerical Approximations}, pages 123--131. Springer
  International Publishing, 2019.

\bibitem{tacha3}
J.~Fotso~Tachago, H.~Nnang, and E.~Zappale.
\newblock Reiterated periodic homogenization of integral functionals with
  convex and nonstandard growth integrands.
\newblock {\em Opuscula Math.}, \textbf{41}(1):113--143, 2021.

\bibitem{tacha4}
J.~Fotso~Tachago, H.~Nnang, and E.~Zappale.
\newblock Reiterated homogenization of nonlinear degenerate elliptic operators
  with nonstandard growth.
\newblock {\em Differential and Integral Equations},
  \textbf{37}(9/10):717--752, 2024.

\bibitem{gambi}
B.~Gambin and J.~Telega.
\newblock Effective properties of elastic solids with randomly distributed
  microcracks.
\newblock {\em Mechanics Research Communications}, \textbf{27}(6):697--706,
  2000.

\bibitem{hille}
E.~Hille and R.~S. Phillips.
\newblock {\em Functional analysis and semi-groups}.
\newblock American Mathematical Soc., 1996.

\bibitem{jikov}
V.~V. Jikov, S.~M. Kozlov, and O.~A. Oleinik.
\newblock {\em Homogenization of differential operators and integral
  functionals}.
\newblock Springer Science \& Business Media, 2012.

\bibitem{kato}
T.~Kato.
\newblock Analytic perturbation theory.
\newblock In {\em Perturbation theory for linear operators}, pages 364--426.
  Springer, 1966.

\bibitem{marc}
P.~Marcellini.
\newblock Approximation of quasiconvex functions, and lower semicontinuity of
  multiple integrals.
\newblock {\em manuscripta mathematica}, \textbf{51}(1):1--28, 1985.

\bibitem{mignon2}
G.~Mingione and V.~R{\u{a}}dulescu.
\newblock Recent developments in problems with nonstandard growth and
  nonuniform ellipticity.
\newblock {\em Journal of Mathematical Analysis and Applications},
  \textbf{501}(1):125--197, 2021.

\bibitem{nguet1}
G.~Nguetseng.
\newblock A general convergence result for a functional related to the theory
  of homogenization.
\newblock {\em SIAM Journal on Mathematical Analysis}, \textbf{20}(3):608--623,
  1989.

\bibitem{nanguet1}
G.~Nguetseng and H.~Nnang.
\newblock Homogenization of nonlinear monotone operators beyond the periodic
  setting.
\newblock {\em Electron. J. Differ. Equ.}, \textbf{2003}:1--24, 2023.

\bibitem{wou}
G.~Nguetseng, H.~Nnang, and J.~L. Woukeng.
\newblock Deterministic homogenization of integral functionals with convex
  integrands.
\newblock {\em Nonlinear Differential Equations and Applications NoDEA},
  \textbf{17}:757--781, 2010.

\bibitem{gabri}
G.~Nguetseng, M.~Sango, and J.~L. Woukeng.
\newblock Reiterated ergodic algebras and applications.
\newblock {\em Communications in Mathematical Physics}, \textbf{300}:835--876,
  2010.

\bibitem{pankov}
A.~A. Pankov.
\newblock {\em G-convergence and homogenization of nonlinear partial
  differential operators}, volume 422.
\newblock Springer Science \& Business Media, 2013.

\bibitem{rudin}
W.~Rudin.
\newblock {\em Functional Analysis}.
\newblock McGraw-Hill, New York, 1973.

\bibitem{sango}
M.~Sango and J.~L. Woukeng.
\newblock Stochastic two-scale convergence of an integral functional.
\newblock {\em Asymptotic Analysis}, \textbf{73}(1-2):97--123, 2011.

\bibitem{laurent}
L.~Schwartz.
\newblock {\em Topologie g{\'e}n{\'e}rale et analyse fonctionnelle}.
\newblock Hermann, 1970.

\bibitem{van}
J.~van Tiel.
\newblock {\em Convex Analysis: An Introductory Text}.
\newblock Wiley, 1984.

\bibitem{zand}
V.~Zander.
\newblock Fubini theorems for orlicz spaces of lebesgue-bochner measurable
  functions.
\newblock {\em Proceedings of the American Mathematical Society},
  \textbf{32}(1):102--110, 1972.

\end{thebibliography}

\end{document}